\numberwithin{equation}{section}
\theoremstyle{definition}
\newtheorem{theorem}{Theorem}[section]
\newtheorem{lemma}[theorem]{Lemma}
\newtheorem{corollary}[theorem]{Corollary}
\newtheorem{proposition}[theorem]{Proposition}
\newtheorem{definition}[theorem]{Definition}
\newtheorem{example}[theorem]{Example}
\newtheorem{Rem}[theorem]{Remark}
\newcommand{\im}{\operatorname{im}}
\begin{document}

\title[Bijections and Lawrence polytope]{A framework unifying some bijections for graphs and its connection to Lawrence polytopes}

\author[Changxin Ding]{Changxin Ding}

\address{Georgia Institute of Technology \\ School of Mathematics \\Atlanta, GA 30332-0160} 

\email{cding66@gatech.edu}

\begin{abstract}
Let $G$ be a connected graph. The Jacobian group (also known as the Picard group or sandpile group) of $G$ is a finite abelian group whose cardinality equals the number of spanning trees of $G$. The Jacobian group admits a canonical simply transitive action on the set $\mathcal{R}(G)$ of cycle-cocycle reversal classes of orientations of $G$. Hence one can construct combinatorial bijections between spanning trees of $G$ and $\mathcal{R}(G)$ to build connections between spanning trees and the Jacobian group. The BBY bijections and the Bernardi bijections are two important examples. In this paper, we construct a new family of such bijections that includes both. Our bijections depend on a pair of atlases (different from the ones in manifold theory) that abstract and generalize certain common features of the two known bijections. The definitions of these atlases are derived from triangulations and dissections of the Lawrence polytopes associated to $G$. The acyclic cycle signatures and cocycle signatures used to define the BBY bijections correspond to regular triangulations. Our bijections can extend to subgraph-orientation correspondences. Most of our results hold for regular matroids. We present our work in the language of fourientations, which are a generalization of orientations.  
\end{abstract}

\maketitle
Key words: sandpile group; cycle-cocycle reversal class; Lawrence polytope; triangulation; dissection; fourientation 

%Declarations of interest: none

%Mathematics Subject Classification: 05C30, 05C25, 52B05, 52C40
\section{Introduction}

This section should be considered as a paper without proofs. We provide all of the relevant definitions and main results. We also explain motivations and propose open questions. We will provide all the proofs in the later sections. 

\subsection{Overview}\label{introduction}
Given a connected graph $G$, we build a new family of bijections between the set $\mathcal{T}(G)$ of spanning trees of $G$ and the set $\mathcal{R}(G)$ of equivalence classes of orientations of $G$ up to cycle and cocycle reversals. The new family of bijections includes the \emph{BBY bijection} (also known as the geometric bijection) constructed by Backman, Baker, and Yuen \cite{BBY}, and the \emph{Bernardi bijection}\footnote{The Bernardi bijection in \cite{Bernardi} is a subgraph-orientation correspondence. In this paper, by the Bernardi bijection we always mean its restriction to spanning trees.} in \cite{Bernardi}. 

These bijections are closely related to the \emph{Jacobian group} (also known as the \emph{Picard group} or \emph{sandpile group}) $\text{Jac}(G)$ of $G$. The group $\text{Jac}(G)$ and the set $\mathcal{T}(G)$ of spanning trees are equinumerous. Recently, many efforts\footnote{In this overview, we focus on the work where the torsors are defined via bijections. We will discuss the \emph{rotor-routing torsor} defined and studied in \cite{Hol08,CCG} later in Section~\ref{open}.}  have been devoted to making $\mathcal{T}(G)$ a \emph{torsor} for $\text{Jac}(G)$, i.e., defining a simply transitive action of $\text{Jac}(G)$ on $\mathcal{T}(G)$. In \cite{BW}, Baker and Wang  interpreted the Bernardi bijection as a bijection between $\mathcal{T}(G)$ and \emph{break divisors}. Since the set of break divisors is a canonical torsor for $\text{Jac}(G)$, the Bernardi bijection induces the \emph{Bernardi torsor}. In \cite{Yuen}, Yuen defined the geometric bijection between $\mathcal{T}(G)$ and break divisors of $G$. Later, this work was generalized in \cite{BBY} where Backman, Baker, and Yuen defined the BBY bijection between $\mathcal{T}(G)$ and the cycle-cocycle reversal classes $\mathcal{R}(G)$. The set $\mathcal{R}(G)$ was introduced by Gioan \cite{G1} and is known to be a canonical torsor for $\text{Jac}(G)$ \cite{B, BBY}. Hence any bijection between $\mathcal{T}(G)$ and $\mathcal{R}(G)$ makes $\mathcal{T}(G)$ a torsor. 
From the point of view in \cite{BBY}, replacing break divisors with $\mathcal{R}(G)$ provides a more general setting. In particular, we may also view the Bernardi bijection as a bijection between $\mathcal{T}(G)$ and $\mathcal{R}(G)$ and define the Bernardi torsor. 

Our work puts all the above bijections in the same framework. It is surprising because the BBY bijection and the Bernardi bijection rely on totally different parameters. The main ingredients to define the BBY bijection 
are an \emph{acyclic cycle signature} $\sigma$ and an \emph{acyclic cocycle signature} $\sigma^*$ of $G$. The BBY bijection sends spanning trees to $(\sigma,\sigma^*)$-compatible orientations, which are representatives of $\mathcal{R}(G)$. The Bernardi bijection relies on a ribbon structure on the graph $G$ together with a vertex and an edge as initial data. Although for planar graphs, the Bernardi bijection becomes a special case of the BBY bijection, they are different in general \cite{Yuen, BBY}. The main ingredients to define our new bijections are a \emph{triangulating atlas} and a \emph{dissecting atlas} of $G$. These atlases (different from the ones in manifold theory) abstract and generalize certain common features of the two known bijections. They are derived from \emph{triangulations} and \emph{dissections} of the \emph{Lawrence polytopes} associated to graphs. The acyclic cycle signatures and cocycle signatures used to define the BBY bijections correspond to \emph{regular} triangulations.

Our bijections extend to subgraph-orientation correspondences. The construction is similar to the one that extends the BBY bijection in \cite{D2}. The extended bijections have nice specializations to forests and connected subgraphs. 

Our results are also closely related to and motivated by K\'alm\'an's work \cite{K}, K\'alm\'an and T\'othm\'er\'esz's work \cite{KT1}, and Postnikov's work \cite{P} on \emph{root polytopes} of \emph{hypergraphs}, where the hypergraphs specialize to graphs, and the Lawrence polytopes generalize the root polytopes in the case of graphs.

Most of our results hold for \emph{regular matroids} as in \cite{BBY}. Regular matroids are a well-behaved class of matroids which contains graphic matroids and co-graphic matroids. The paper will be written in the setting of regular matroids. 

We present our theory using the language of \emph{fourientations}, which are a generalization of orientations introduced by Backman and Hopkins \cite{BH}. 

Our paper is organized as follows. 
\begin{enumerate}
    \item[\ref{regular}] We review some basics of regular matroids. 
    \item[\ref{four}] We introduce fourientations.
    \item[\ref{new framework}] We use fourientations to build the framework: a pair of atlases and the induced map. We also recall the BBY bijection and the Bernardi bijection as examples.  
    \item[\ref{bijection and atlas}] We define triangulating atlases and dissecting atlases and present our bijections. 
    \item[\ref{sign}] We use our theory to study signatures. In particular, we generalize acyclic signatures to triangulating signatures. 
    \item[\ref{Lawrence-intro}] We build the connection between the geometry of the Lawrence polytopes and the combinatorics of the regular matroid. 
    \item[\ref{intro-motivation}] We explain the motivations by showing how our work is related to \cite{BBY, K, KT1, P}. 
    \item[\ref{open}] We propose some open questions. 
    \item[\ref{combinatorial results}] We prove the results in Section~\ref{bijection and atlas}.
    \item[\ref{signature}] We prove the results in Section~\ref{sign}.
    \item[\ref{Lawrence}] We prove the results in Section~\ref{Lawrence-intro}.
\end{enumerate}

\subsection{Notation and terminology: regular matroids}\label{regular}
In this section, we introduce the definition of regular matroids, signed circuits, signed cocircuits, orientations, etc; see also \cite{BBY} and \cite{D2}. We assume that the reader is familiar with the basic theory of matroids; some standard references include \cite{O}.

A matrix is called \emph{totally unimodular} if every square submatrix has determinant $0$, $1$, or $-1$. A matroid is called \emph{regular} if it can be represented by a totally unimodular matrix over $\mathbb{R}$. 

Let $\mathcal{M}$ be a regular matroid with ground set $E$. We call the elements in $E$ \emph{edges}. Without loss of generality, we may assume $\mathcal{M}$ is represented by an $r\times n$ totally unimodular matrix $M$, where $r=\text{rank}(M)$ and $n=|E|$. Here we require $r>0$ to avoid an empty matrix. For the case $r=0$, most of our results are trivially true. 

For any circuit $C$ of the regular matroid $\mathcal{M}$, there are exactly two $\{0, \pm 1\}$-vectors in $\ker_\mathbb{R}(M)$ with support $C$. We call them \emph{signed circuits} of $\mathcal{M}$, typically denoted by $\overrightarrow{C}$. Dually, for any cocircuit $C^*$, there are exactly two $\{0, \pm 1\}$-vectors in $\im_\mathbb{R}(M^T)$ with support $C^*$. We call them \emph{signed cocircuits} of $\mathcal{M}$, typically denoted by $\overrightarrow{C^*}$. The notions of signed circuit and signed cocircuit are intrinsic to $\mathcal{M}$, independent of the choice of $M$ up to \emph{reorientations}. By a reorientation, we mean multiplying some columns of $M$ by $-1$. For the proofs, see \cite{SW}. 
These signed circuits make $\mathcal{M}$ an \emph{oriented matroid} \cite[Chapter 1.2]{BVSWZ}, so regular matroids are in particular oriented matroids. 

It is well known that the \emph{dual matroid} $\mathcal{M}^*$ of a regular matroid $\mathcal{M}$ is also regular. There exists a totally unimodular matrix $M^*_{(n-r)\times n}$ such that the signed circuits and signed cocircuits of $\mathcal{M^*}$ are the signed cocircuits and signed circuits of $\mathcal{M}$, respectively. For the details, see \cite{SW}. The matrix $M^*$ should be viewed as a dual representation of $M$ in addition to a representation of $\mathcal{M^*}$. In particular, if we multiply some columns of $M$ by $-1$, then we should also multiply the corresponding columns of $M^*_{(n-r)\times n}$ by $-1$.

For any edge $e\in E$, we define an \emph{arc} $\overrightarrow{e}$ of $\mathcal{M}$ to be an $n$-tuple in the domain $\mathbb{R}^E$ of $M$, where the $e$-th entry is $1$ or $-1$ and the other entries are zero. We make the notion of arcs intrinsic to $\mathcal{M}$ in the following sense. If we multiply the $e$-th column of $M$ by $-1$, then an arc $\overrightarrow{e}$ will have the opposite sign with respect to the new matrix, but it is still the same arc of $\mathcal{M}$. So, the matrix $M$ provides us with a reference orientation for $E$ so that we know for the two opposite arcs associated with one edge which one is labeled by ``$1$''. The signed circuits $\overrightarrow{C}$ and signed cocircuits $\overrightarrow{C^*}$ can be viewed as sets of arcs in a natural way.  An \emph{orientation} of $\mathcal{M}$, typically denoted by $\overrightarrow{O}$, is a set of arcs where each edge appears exactly once. It makes sense to write $\overrightarrow{e}\in\overrightarrow{C}$, $\overrightarrow{C^*}\subseteq\overrightarrow{O}$, etc. In these cases we say the arc $\overrightarrow{e}$ is \emph{in} the signed circuit $\overrightarrow{C}$, the signed cocircuit $\overrightarrow{C^*}$ is \emph{in} the orientation $\overrightarrow{O}$, etc.

Now we recall the notion of \emph{circuit-cocircuit reversal (equivalence) classes} of orientations of $\mathcal{M}$ introduced by Gioan \cite{G1, G2}.
If $\overrightarrow{C}$ is a signed circuit in an orientation $\overrightarrow{O}$ of $\mathcal{M}$, then a \emph{circuit reversal} replaces $\overrightarrow{C}$ with the opposite signed circuit $-\overrightarrow{C}$ in $\overrightarrow{O}$. The equivalence relation generated by circuit reversals defines the \emph{circuit reversal classes} of orientations of $\mathcal{M}$. Similarly, we may define the \emph{cocircuit reversal classes}. The equivalence relation generated by circuit and cocircuit reversals defines the circuit-cocircuit reversal classes. We denote by $[\overrightarrow{O}]$ the circuit-cocircuit reversal class containing $\overrightarrow{O}$. It is proved in \cite{G2} that the number of circuit-cocircuit reversal classes of $\mathcal{M}$ equals the number of bases of $\mathcal{M}$.

Let $B$ be a basis of $\mathcal{M}$ and $e$ be an edge. If $e\notin B$, then we call the unique circuit in $B\cup \{e\}$ the \emph{fundamental circuit} of $e$ with respect to $B$, denoted by $C(B,e)$; if $e\in B$, then we call the unique cocircuit in $(E\backslash B)\cup \{e\}$ the \emph{fundamental cocircuit} of $e$ with respect to $B$, denoted by $C^*(B,e)$.

\emph{Graphic matroids} are important examples of regular matroids. Let $G$ be a connected finite graph with nonempty edge set $E$, where \emph{loops} and \emph{multiple edges} are allowed. By fixing a reference orientation of $G$, we get an \emph{oriented incidence matrix} of $G$. By deleting any row of the matrix, we get a totally unimodular matrix $M$ of full rank representing the graphic matroid $\mathcal{M}_G$ associated to $G$; see \cite{O} for details. Then edges, bases, signed circuits, signed cocircuits, arcs, orientations, and circuit-cocircuit reversal classes of $\mathcal{M}_G$ are edges, spanning trees, directed cycles, directed cocycles (bonds), arcs, orientations, and cycle-cocycle reversal classes of $G$, respectively. For direct definitions of these graph objects, see \cite[Section 2.1]{D2}.

\subsection{Notation and terminology: fourientations}\label{four}

It is convenient to introduce our theory in terms of \emph{fourientations}. Fourientations of graphs are systematically studied by Backman and Hopkins \cite{BH}. We will only make use of the basic notions but we define them for regular matroids. A fourientation $\overrightarrow{F}$ of the regular matroid $\mathcal{M}$ is a subset of the set of all the arcs. Symbolically, $\overrightarrow{F}\subseteq\{\pm\overrightarrow{e}: e\in E\}$. Intuitively, a fourientation is a choice for each edge of $\mathcal{M}$ whether to 
make it \emph{one-way oriented}, leave it \emph{unoriented}, or \emph{biorient} it. 
We denote by $-\overrightarrow{F}$ the fourientation obtained by reversing all the arcs in $\overrightarrow{F}$. In particular, the bioriented edges remain bioriented. We denote by $\overrightarrow{F}^c$ the set complement of $\overrightarrow{F}$, which is also a fourientation. Sometimes we use the notation $-\overrightarrow{F}^c$, which switches the unoriented edges and the bioriented edges in $\overrightarrow{F}$. See Figure~\ref{fourientation1} for examples of fourientations.

\begin{figure}[ht]
            \centering
            \includegraphics[scale=0.9]{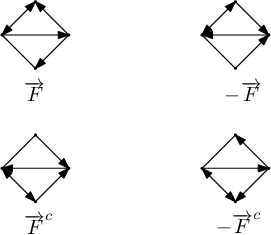}
            \caption{Examples of fourientations}
            \label{fourientation1}
\end{figure}

A \emph{potential circuit} of a fourientation $\overrightarrow{F}$ is a signed circuit $\overrightarrow{C}$ such that $\overrightarrow{C}\subseteq \overrightarrow{F}$. A \emph{potential cocircuit} of a fourientation $\overrightarrow{F}$ is a signed cocircuit $\overrightarrow{C^*}$ such that $\overrightarrow{C^*}\subseteq -\overrightarrow{F}^c$. 

\begin{figure}[ht]
            \centering
            \includegraphics[scale=1]{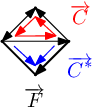}
            \caption{A potential circuit and a potential cocircuit of $\protect\overrightarrow{F}$}
            \label{fourientation2}
\end{figure}

\subsection{New framework: a pair of atlases $(\mathcal{A}, \mathcal{A^*})$ and the induced map $f_{\mathcal{A},\mathcal{A^*}}$}\label{new framework}

The BBY bijection studied in \cite{BBY} relies upon a pair consisting of an acyclic circuit signature and an acyclic cocircuit signature. We will generalize this work by building a new framework where the signatures are replaced by \emph{atlases} and the BBY bijection is replaced by a map $f_{\mathcal{A},\mathcal{A^*}}$. This subsection will introduce these new terminologies. 

\begin{definition}\label{def1}
Let $B$ be a basis of $\mathcal{M}$. \begin{enumerate}
    \item We call the edges in $B$ \emph{internal} and the edges not in $B$ \emph{external}.
    \item An \emph{externally oriented basis} $\overrightarrow{B}$ is a fourientation where all the internal edges are bioriented and all the external edges are one-way oriented. 
    \item An \emph{internally oriented basis} $\overrightarrow{B^*}$ is a fourientation where all the external edges are bioriented and all the internal edges are one-way oriented. 
    \item An \emph{external atlas} $\mathcal{A}$ of $\mathcal{M}$ is a collection of externally oriented bases $\overrightarrow{B}$ such that each basis of $\mathcal{M}$ appears exactly once. 
    \item An \emph{internal atlas} $\mathcal{A^*}$ of $\mathcal{M}$ is a collection of internally oriented bases $\overrightarrow{B^*}$ such that each basis of $\mathcal{M}$ appears exactly once.
\end{enumerate}
\end{definition}

See Figure~\ref{mapexample} for an example.

\begin{figure}[ht]
            \centering
            \includegraphics[scale=0.75]{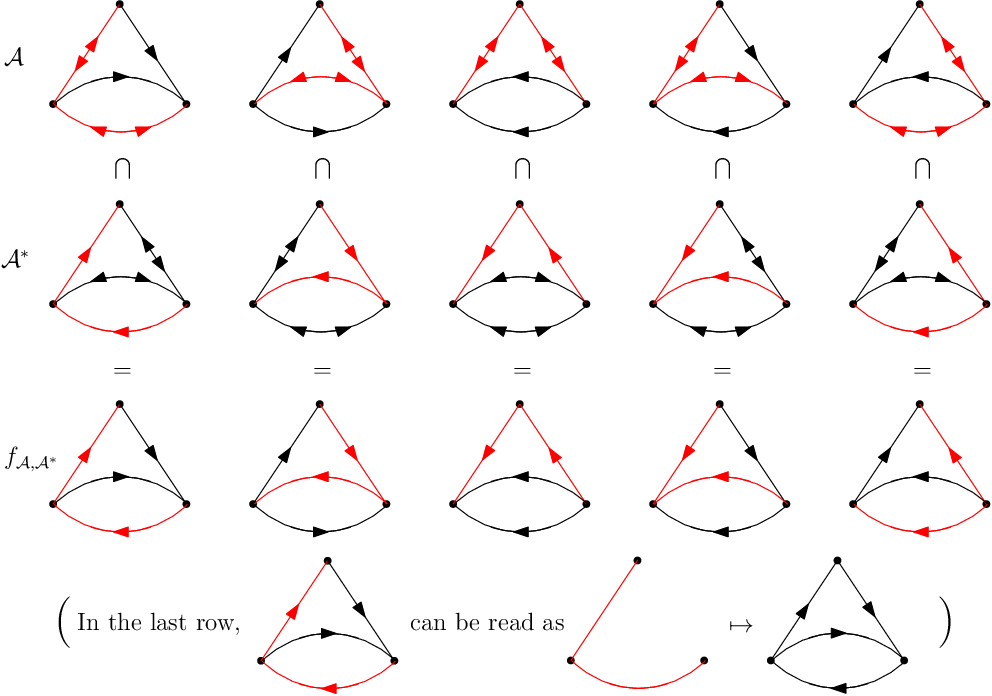}
            \caption{An example for Definition~\ref{def1} and \ref{def2}. The graph consists of 3 vertices and 4 edges. The bases (spanning trees) of the graph are in red. Each column consists of an externally oriented basis $\protect\overrightarrow{B}\in\mathcal{A}$, an internally oriented basis $\protect\overrightarrow{B^*}\in\mathcal{A}^*$, and their intersection $f_{\mathcal{A},\mathcal{A^*}}(B)=\protect\overrightarrow{B}\cap\protect\overrightarrow{B^*}$. }
            \label{mapexample}
\end{figure}  
 
Given an external atlas $\mathcal{A}$ (resp. internal atlas $\mathcal{A^*}$) and a basis $B$, by $\overrightarrow{B}$ (resp. $\overrightarrow{B^*}$) we always mean the oriented basis in the atlas although the notation does not refer to the atlas. 
\begin{definition}\label{def2}
For a pair of atlases $(\mathcal{A},\mathcal{A^*})$, we define the following map
    \begin{align*}
    f_{\mathcal{A},\mathcal{A^*}}:\{\text{bases of }\mathcal{M}\} & \to \{\text{orientations of }\mathcal{M}\} \\
    B & \mapsto \overrightarrow{B} \cap \overrightarrow{B^*} \text{ (where } \overrightarrow{B}\in\mathcal{A},\overrightarrow{B^*}\in\mathcal{A^*}\text{).}
    \end{align*}    
\end{definition}
We remark that, for any map $f$ from bases to orientations, there exists a unique pair of atlases $(\mathcal{A},\mathcal{A}^*)$ such that $f=f_{\mathcal{A},\mathcal{A^*}}$. This is because, given a basis $B$, there is no difference between \begin{itemize}
    \item orienting its external and internal edges separately, which $f_{\mathcal{A},\mathcal{A^*}}$ does, and  
    \item orienting all the edges of $\mathcal{M}$ simultaneously, which $f$ does.
\end{itemize}
So, the pair of atlases merely lets us view the map $f$ from a different perspective. However, from the main results of this paper, one will see why this new perspective interests us.

In the forthcoming Example~\ref{sigma atlas} and Example~\ref{B atlas}, we will rephrase the BBY bijection and the Bernardi bijection in our language. Before that, we recall the definitions of circuit signatures and cocircuit signatures introduced in \cite{BBY}.

\begin{definition}\label{def3}
Let $\mathcal{M}$ be a regular matroid. 
\begin{enumerate}
    \item A \emph{circuit signature}\footnote{For a graph $G$, circuit signatures are called cycle signatures, and cocircuit signatures are called cocycle signatures; also see the last paragraph of Section~\ref{regular}. } $\sigma$ of $\mathcal{M}$ is the choice of a direction for each circuit of $\mathcal{M}$. For each circuit $C$, we denote by $\sigma(C)$ the signed circuit we choose for $C$. By abuse of notation, we also view $\sigma$ as the set of the signed circuits chosen by $\sigma$.
    \item The circuit signature $\sigma$ is said to be \emph{acyclic} if whenever $a_C$ are nonnegative reals with $\sum_C a_C\sigma(C)=0$ in $\mathbb{R}^E$ we have $a_C=0$ for all $C$, where the sum is over all circuits of $\mathcal{M}$. 
    \item Cocircuit signatures $\sigma^*$ and acyclic cocircuit signatures are defined similarly. 
\end{enumerate}
\end{definition}

\begin{example}[Atlases $\mathcal{A_\sigma},\mathcal{A^*_{\sigma^*}}$ and the BBY map (bijection)]\label{sigma atlas} We rephrase the BBY map defined in \cite{BBY} as follows. Let $\sigma$ be a circuit signature of $\mathcal{M}$. We may construct an external atlas $\mathcal{A_\sigma}$ from $\sigma$ such that for each externally oriented basis $\overrightarrow{B}\in\mathcal{A_\sigma}$, each external arc $\overrightarrow{e}\in\overrightarrow{B}$ is oriented according to the signed fundamental circuit $\sigma(C(B,e))$. Similarly, we may construct an internal atlas $\mathcal{A}^*_{\sigma^*}$ from any cocircuit signature $\sigma^*$ such that for each internally oriented basis $\overrightarrow{B^*}\in\mathcal{A}^*_{\sigma^*}$, each internal arc $\overrightarrow{e}\in\overrightarrow{B^*}$ is oriented according to the signed fundamental cocircuit $\sigma^*(C^*(B,e))$. Then when the two signatures are acyclic, the map $f_{\mathcal{A_\sigma},\mathcal{A^*_{\sigma^*}}}$ is exactly the BBY map $\hat{\beta}$ in \cite[Theorem 1.3.1]{BBY}. 
\end{example}

\begin{example}[Atlases $\mathcal{A_\text{B}}, \mathcal{A}_q^*$ and the Bernardi map (bijection)]\label{B atlas}
The Bernardi map is defined for a connected graph $G$ equipped with a \emph{ribbon structure} (a.k.a \emph{combinatorial embedding}) and with initial data $(q,e)$, where $q$ is a vertex and $e$ is an edge incident to the vertex; see \cite[Section 3.2]{Bernardi} for details. Here we use an example (Figure~\ref{mapexample2}) to recall the construction of the bijection in the atlas language. The Bernardi bijection is a map from spanning trees to certain orientations. The construction makes use of the \emph{Bernardi tour} which starts with $(q,e)$ and goes around a given tree $B$ according to the ribbon structure. We may construct an external atlas $\mathcal{A_\text{B}}$ of $\mathcal{M}_G$ as follows. Observe that the Bernardi tour cuts each external edge twice. We orient each external edge toward the first-cut endpoint,  biorient all the internal edges of $B$, and hence get an externally oriented basis $\overrightarrow{B}$. All such externally oriented bases form the atlas $\mathcal{A_\text{B}}$. 

The internal atlas $\mathcal{A}_q^*$ of $\mathcal{M}_G$ is constructed as follows. For any tree $B$, we orient each internal edge away from $q$, biorient external edges, and hence get $\overrightarrow{B^*}\in\mathcal{A}_q^*$. We remark that $\mathcal{A}_q^*$ is a special case of $\mathcal{A^*_{\sigma^*}}$, where $\sigma^*$ is an acyclic cocycle signature \cite[Example 5.1.5]{Yuen2}. 

The map $f_{\mathcal{A}_\text{B},\mathcal{A}_q^*}$ is exactly the Bernardi map $\Phi$ in \cite[Section 3.2]{Bernardi}. 

\begin{figure}[ht]
            \centering
            \includegraphics[scale=0.9]{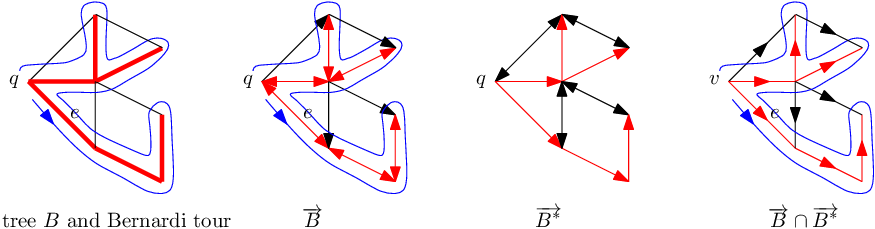}
            \caption{An example for the Bernardi map. The tree $B$ is in red.}
            \label{mapexample2}
\end{figure}
\end{example}

\subsection{Bijections and the two atlases}\label{bijection and atlas}

We will see in this subsection that the map $f_{\mathcal{A},\mathcal{A^*}}$ induces a bijection between bases of $\mathcal{M}$ and circuit-cocircuit reversal classes of $\mathcal{M}$ when the two atlases satisfy certain conditions which we call \emph{dissecting} and \emph{triangulating}. Furthermore, we will extend the bijection as in \cite{D2}. 

The following definitions play a central role in our paper. Although the definitions are combinatorial, they were derived from Lawrence polytopes; see Section~\ref{Lawrence-intro}.

\begin{definition}\label{key def}
Let $\mathcal{A}$ be an external atlas and $\mathcal{A^*}$ be an internal atlas of $\mathcal{M}$. 

\begin{enumerate}
    \item We call $\mathcal{A}$ \emph{dissecting} if for any two distinct bases $B_1$ and $B_2$, the fourientation $\overrightarrow{B_1}\cap(-\overrightarrow{B_2})$ has a potential cocircuit.
    \item We call $\mathcal{A}$ \emph{triangulating} if for any two distinct bases $B_1$ and $B_2$, the fourientation $\overrightarrow{B_1}\cap(-\overrightarrow{B_2})$ has no potential circuit.
    \item We call $\mathcal{A^*}$ \emph{dissecting} if for any two distinct bases $B_1$ and $B_2$, the fourientation $(\overrightarrow{B_1^*}\cap(-\overrightarrow{B_2^*}))^c$ has a potential circuit.
    \item We call $\mathcal{A^*}$ \emph{triangulating} if for any two distinct bases $B_1$ and $B_2$, the fourientation $(\overrightarrow{B_1^*}\cap(-\overrightarrow{B_2^*}))^c$ has no potential cocircuit.
\end{enumerate}
\end{definition}

\begin{Rem}
In Definition~\ref{key def}, being triangulating implies being dissecting. Indeed, the fourientation $\overrightarrow{B_1}\cap(-\overrightarrow{B_2})$ must contain a one-way oriented edge because of $B_1\neq B_2$. Then by Lemma~\ref{3-painting}, the absence of potential circuits implies the existence of a potential cocircuit. A similar argument works for the other fourientation.  
\end{Rem}

Now we are ready to present the first main result in this paper. 
\begin{theorem}\label{main1}
    Given a pair of dissecting atlases $(
\mathcal{A},\mathcal{A^*})$ of a regular matroid $\mathcal{M}$, if at least one of the atlases is triangulating, then the map 
    \begin{align*}
    \overline{f}_{\mathcal{A},\mathcal{A^*}}:\{\text{bases of }\mathcal{M}\} & \to \{\text{circuit-cocircuit reversal classes of }\mathcal{M}\} \\
    B & \mapsto [\overrightarrow{B} \cap \overrightarrow{B^*}]
    \end{align*}
is bijective, where $[\overrightarrow{B} \cap \overrightarrow{B^*}]$ denotes the circuit-cocircuit reversal class containing the orientation $\overrightarrow{B} \cap \overrightarrow{B^*}$.
\end{theorem}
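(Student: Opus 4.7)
The plan is to reduce the theorem to injectivity via counting and then exploit the two hypotheses on $(\mathcal{A},\mathcal{A^*})$ to force a contradiction. By Gioan's theorem cited in Section~\ref{regular}, the number of bases of $\mathcal{M}$ equals the number of circuit-cocircuit reversal classes, so $\overline{f}_{\mathcal{A},\mathcal{A^*}}$ is a bijection as soon as it is injective. (One first checks that each $\overrightarrow{O_i}:=\overrightarrow{B_i}\cap\overrightarrow{B_i^*}$ really is an orientation: $\overrightarrow{B_i}$ contributes the one-way arc on each external edge while $\overrightarrow{B_i^*}$ contributes it on each internal edge.) It therefore suffices to show that distinct bases $B_1\neq B_2$ produce $\overrightarrow{O_1},\overrightarrow{O_2}$ in distinct classes.

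The key preparatory ingredient is a standard oriented-matroid characterization of $\sim$: if $\overrightarrow{O_1}\sim\overrightarrow{O_2}$, then the disagreement fourientation $\overrightarrow{O_1}\cap(-\overrightarrow{O_2})$ decomposes as a disjoint union of signed circuits and signed cocircuits, each contained in $\overrightarrow{O_1}$. (This decomposition is what underlies Gioan's proof of equinumerosity.) Assume for contradiction that $B_1\neq B_2$ yet $\overrightarrow{O_1}\sim\overrightarrow{O_2}$, and fix such a decomposition.

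Without loss of generality suppose $\mathcal{A}$ is triangulating. Since $\overrightarrow{O_i}\subseteq\overrightarrow{B_i}$, one has the inclusion $\overrightarrow{O_1}\cap(-\overrightarrow{O_2})\subseteq\overrightarrow{B_1}\cap(-\overrightarrow{B_2})$, so any signed circuit in the decomposition would be a potential circuit of $\overrightarrow{B_1}\cap(-\overrightarrow{B_2})$, directly contradicting the triangulating clause of Definition~\ref{key def}. Hence the decomposition consists solely of signed cocircuits $\overrightarrow{C^*_1},\ldots,\overrightarrow{C^*_k}\subseteq\overrightarrow{O_1}$, i.e.\ $\overrightarrow{O_1}$ and $\overrightarrow{O_2}$ differ purely by disjoint cocircuit reversals. (The symmetric argument, with $\mathcal{A^*}$ in the triangulating role, exchanges the roles of signed circuits and signed cocircuits throughout.)

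Finally, the dissecting hypothesis on $\mathcal{A^*}$ produces a signed circuit $\overrightarrow{C}\subseteq(\overrightarrow{B_1^*}\cap(-\overrightarrow{B_2^*}))^c$, and one uses $\overrightarrow{C}$ to obstruct the purely-cocircuit decomposition just obtained. A case analysis on how the support of $\overrightarrow{C}$ meets the four partition classes $B_1\cap B_2$, $B_1\setminus B_2$, $B_2\setminus B_1$, and $E\setminus(B_1\cup B_2)$ should show that $\overrightarrow{C}$ meets at least one $\overrightarrow{C^*_j}$ in a sign pattern violating the standard orthogonality between signed circuits and signed cocircuits. I expect this last step to be the main obstacle: the fourientations $\overrightarrow{B_1}\cap(-\overrightarrow{B_2})$ and $(\overrightarrow{B_1^*}\cap(-\overrightarrow{B_2^*}))^c$ have asymmetric descriptions on internal and external edges, and reconciling the sign conventions of $\overrightarrow{C}$ with those of the cocircuits $\overrightarrow{C^*_j}$ demands exactly the bookkeeping that the triangulating and dissecting conditions in Definition~\ref{key def} are crafted to make possible.
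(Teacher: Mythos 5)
Your reduction to injectivity via Gioan's count, your use of the circuit--cocircuit decomposition of Lemma~\ref{orientation1}, and your first step (killing the signed circuits $\overrightarrow{C_i}$ in the decomposition: $\overrightarrow{C_i}\subseteq\overrightarrow{O_1}\cap(-\overrightarrow{O_2})\subseteq\overrightarrow{B_1}\cap(-\overrightarrow{B_2})$, contradicting $\mathcal{A}$ triangulating) all match the paper's proof of Proposition~\ref{th1prop}. The gap is in your final step, and it is not just unfinished bookkeeping: the mechanism you propose cannot by itself produce a contradiction. If the potential circuit $\overrightarrow{C}\subseteq\overrightarrow{F^*}=(\overrightarrow{B_1^*}\cap(-\overrightarrow{B_2^*}))^c$ met some $C_j^*$, then since $\overrightarrow{F^*}|_{C_j^*}\subseteq-\overrightarrow{C_j^*}$, the circuit $\overrightarrow{C}$ would disagree with $\overrightarrow{C_j^*}$ on \emph{every} common edge, violating the orthogonality axiom (Lemma~\ref{orientation2}). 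So your ``orthogonality violation'' only proves that $C$ is \emph{disjoint} from every $C_j^*$ --- it does not obstruct the cocircuit decomposition at all (and indeed, if $J=\emptyset$ there is nothing to obstruct, yet a contradiction is still required).

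The missing ingredient is the transfer principle the paper extracts from Table~\ref{table1} (Lemma~\ref{theorem1lemma}(1)): on the set $E_\rightrightarrows$ of edges where $\overrightarrow{O_1}$ and $\overrightarrow{O_2}$ agree, the two disagreement fourientations $\overrightarrow{F}=\overrightarrow{B_1}\cap(-\overrightarrow{B_2})$ and $\overrightarrow{F^*}$ coincide. Once you know $I=\emptyset$ and $C\cap C_j^*=\emptyset$ for all $j$, the support of $\overrightarrow{C}$ lies in $E_\rightrightarrows$, so $\overrightarrow{C}\subseteq\overrightarrow{F^*}|_{E_\rightrightarrows}=\overrightarrow{F}|_{E_\rightrightarrows}$ makes $\overrightarrow{C}$ a potential circuit of $\overrightarrow{F}$, and the contradiction comes from invoking the triangulating hypothesis on $\mathcal{A}$ a \emph{second} time --- a move absent from your sketch, which uses that hypothesis only once. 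You should verify the identity $\overrightarrow{F}|_{E_\rightrightarrows}=\overrightarrow{F^*}|_{E_\rightrightarrows}$ by the case analysis over $B_1\cap B_2$, $B_1\setminus B_2$, $B_2\setminus B_1$, $B_1^c\cap B_2^c$ that you correctly anticipated; that is exactly where the asymmetric definitions of $\overrightarrow{F}$ and $\overrightarrow{F^*}$ get reconciled.
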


\begin{example}[Example~\ref{sigma atlas} continued]\label{mainex}
One of the main results in \cite{BBY} is that the BBY map induces a bijection between bases and circuit-cocircuit reversal classes. We will see that both $\mathcal{A_\sigma}$ and $\mathcal{A^*_{\sigma^*}}$ are triangulating (Lemma~\ref{acyclic-tri}). Thus Theorem~\ref{main1} recovers this result.   
\end{example}

\begin{example}[Example~\ref{B atlas} continued]
Theorem~\ref{main1} also recovers the bijectivity of the Bernardi map for trees in \cite{Bernardi}. In \cite{Bernardi}, it is proved that the Bernardi map is a bijection between spanning trees and the \emph{$q$-connected outdegree sequences}. Baker and Wang \cite{BW} observed that the $q$-connected outdegree sequences are essentially the same as the \emph{break divisors}. Later in \cite{BBY}, the break divisors are equivalently replaced by cycle-cocycle reversal classes. We will see that the external atlas $\mathcal{A}_\text{B}$ is dissecting (Lemma~\ref{bernardidissect}). The internal atlas $\mathcal{A}_q^*$ is triangulating because it equals $\mathcal{A^*_{\sigma^*}}$ for some acyclic signature $\sigma^*$. Hence the theorem applies.

\end{example}

\begin{example}
In Theorem~\ref{main1}, if we only assume that the pair of atlases is dissecting, then the map $\overline{f}_{\mathcal{A},\mathcal{A^*}}$ is not necessarily bijective. For example, in Figure~\ref{mapexample}, one can check that the two atlases are dissecting but not triangulating. The two leftmost orientations (in the third row) are in the same circuit-cocircuit reversal class. 
\end{example}

In \cite{D2}, the BBY bijection is extended to a bijection $\varphi$ between subsets of $E$ and orientations of $\mathcal{M}$ in a canonical way. We also generalize this work by extending $f^{-1}_{\mathcal{A},\mathcal{A^*}}$ to $\varphi_{\mathcal{A},\mathcal{A^*}}$. The definition of the extension is the same as the one in \cite[Theorem 1.2(1)]{D2}, except that the map $f_{\mathcal{A},\mathcal{A^*}}$ is more general in this paper. 

\begin{definition}[The map $\varphi_{\mathcal{A},\mathcal{A^*}}$]\label{def-ext}
We will define a map $\varphi_{\mathcal{A},\mathcal{A^*}}$ from orientations to subgraphs such that $\varphi_{\mathcal{A},\mathcal{A^*}}\circ f_{\mathcal{A},\mathcal{A^*}}$ is the identity map, and hence $\varphi_{\mathcal{A},\mathcal{A^*}}$ extends $f^{-1}_{\mathcal{A},\mathcal{A^*}}$.  We start with an orientation $\overrightarrow{O}$. By Theorem~\ref{main1}, we get a basis $B=\overline{f}_{\mathcal{A},\mathcal{A^*}}^{-1}([\overrightarrow{O}])$. Since $\overrightarrow{O}$ and $f_{\mathcal{A},\mathcal{A^*}}(B)$ are in the same circuit-cocircuit reversal class, one can obtain one of them by reversing disjoint\footnote{By disjoint signed circuits $\{\overrightarrow{C_i}\}_{i\in I}$ and cocircuits $\{\overrightarrow{C_j^*}\}_{j\in J}$, we mean that any two elements from the set $\{C_i:i\in I\}\cup\{C_j^*:j\in J\}$ are disjoint.} signed circuits $\{\overrightarrow{C_i}\}_{i\in I}$ and cocircuits $\{\overrightarrow{C_j^*}\}_{j\in J}$ in the other (see Lemma~\ref{orientation1}). Define \[\varphi_{\mathcal{A},\mathcal{A^*}}(\overrightarrow{O})=(B\cup \biguplus\limits_{i\in I}C_i)\backslash \biguplus\limits_{j\in J}C_j^*,\]where the symbol $\biguplus$ means disjoint union. See Figure~\ref{fig: phi map} for the relations among the objects in the definition. 
\end{definition}

\begin{figure}[H]
    \centering
\begin{tikzcd}
B \arrow[r, "f_{\mathcal{A},\mathcal{A^*}}", maps to] \arrow[d, "\text{add }C_i\text{ and remove }C_j^*"', dashed] & \overrightarrow{O'} \arrow[d, "\text{reverse }\overrightarrow{C_i}\text{ and } \overrightarrow{C_j^*}", dashed]  \\
S                                                   & \overrightarrow{O} \arrow[l, "\varphi_{\mathcal{A},\mathcal{A^*}}", maps to]
\end{tikzcd}
    \caption{A description of Definition~\ref{def-ext}. The orientation $\overrightarrow{O'}$ is the unique orientation in the circuit-cocircuit reversal class $[\overrightarrow{O}]$ and the image of $f_{\mathcal{A},\mathcal{A^*}}$. The subgraph $S$ is $\varphi_{\mathcal{A},\mathcal{A^*}}(\overrightarrow{O})$. The direction of the arrow connecting $\overrightarrow{O}$ and $\overrightarrow{O'}$ is immaterial.}
    \label{fig: phi map}
\end{figure}

The amazing fact here is that $\varphi_{\mathcal{A},\mathcal{A^*}}$ is a bijection, and it has two nice specializations besides $f^{-1}_{\mathcal{A},\mathcal{A^*}}$. 
\begin{theorem}\label{main2}
Fix a pair of dissecting atlases $(
\mathcal{A},\mathcal{A^*})$ of $\mathcal{M}$ with ground set $E$. Suppose at least one of the atlases is triangulating. 

(1) The map 
    \begin{align*}
    \varphi_{\mathcal{A},\mathcal{A^*}}:\{\text{orientations of }\mathcal{M}\} & \to \{\text{subsets of }E\} \\
    \overrightarrow{O} & \mapsto (B\cup \biguplus_{i\in I}C_i)\backslash \biguplus_{j\in J}C_j^*
    \end{align*}
is a bijection, where $B$ is the unique basis such that $f_{\mathcal{A},\mathcal{A^*}}(B)\in [\overrightarrow{O}]$, and the orientations $f_{\mathcal{A},\mathcal{A^*}}(B)$ and $\overrightarrow{O}$ differ by  disjoint signed circuits $\{\overrightarrow{C_i}\}_{i\in I}$ and cocircuits $\{\overrightarrow{C_j^*}\}_{j\in J}$.

(2) The image of the independent sets of $\mathcal{M}$ under the bijection $\varphi^{-1}_{\mathcal{A},\mathcal{A^*}}$ is a representative set of the circuit reversal classes of $\mathcal{M}$. 

(3) The image of the spanning sets of $\mathcal{M}$ under the bijection $\varphi^{-1}_{\mathcal{A},\mathcal{A^*}}$ is a representative set of the cocircuit reversal classes of $\mathcal{M}$. 
\end{theorem}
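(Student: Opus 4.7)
The plan for part~(1) is first to verify well-definedness of $\varphi_{\mathcal{A},\mathcal{A^*}}$, then to establish bijectivity via counting and injectivity. For well-definedness, the families $\{\overrightarrow{C_i}\}_{i\in I}$ and $\{\overrightarrow{C_j^*}\}_{j\in J}$ witnessing that $\overrightarrow{O}$ and $f_{\mathcal{A},\mathcal{A^*}}(B)$ lie in the same circuit-cocircuit reversal class need not be unique; I expect Lemma~\ref{orientation1} to guarantee that the underlying subsets $\bigcup_i C_i$ and $\bigcup_j C_j^*$ of $E$ are nonetheless invariants of $\overrightarrow{O}$, making the formula $(B \cup \bigcup_i C_i) \setminus \bigcup_j C_j^*$ independent of choices.

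Since $|\{\text{orientations of }\mathcal{M}\}| = 2^{|E|} = |2^E|$, bijectivity in~(1) follows from injectivity. My strategy is to show that the fibers $T_B := \varphi_{\mathcal{A},\mathcal{A^*}}([\overrightarrow{O}_B])$ with $\overrightarrow{O}_B := f_{\mathcal{A},\mathcal{A^*}}(B)$ partition $2^E$ as $B$ ranges over bases of $\mathcal{M}$. This breaks into two sub-steps: (a)~any $S \in T_B$ uniquely determines $B$, and (b)~$\varphi_{\mathcal{A},\mathcal{A^*}}$ restricted to $[\overrightarrow{O}_B]$ is injective. Step~(b) should reduce to the uniqueness portion of Lemma~\ref{orientation1}. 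Step~(a) is where the dissecting and triangulating hypotheses enter: the argument should parallel the injectivity portion of the proof of Theorem~\ref{main1}, translating those hypotheses into constraints on which subsets of $E$ can arise in the form $(B' \cup \bigcup C_i') \setminus \bigcup C_j^{*'}$ for each candidate basis $B'$.

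Parts~(2) and~(3) will then follow from a structural analysis of the disjoint decomposition. Since $\{C_i\}$ and $\{C_j^*\}$ are mutually edge-disjoint, $\bigcup_i C_i \subseteq \varphi_{\mathcal{A},\mathcal{A^*}}(\overrightarrow{O})$ and $\bigcup_j C_j^* \subseteq E \setminus \varphi_{\mathcal{A},\mathcal{A^*}}(\overrightarrow{O})$. Hence $\varphi_{\mathcal{A},\mathcal{A^*}}(\overrightarrow{O})$ contains a circuit (so is dependent) whenever $I \neq \emptyset$, while $I = \emptyset$ forces $\varphi_{\mathcal{A},\mathcal{A^*}}(\overrightarrow{O}) = B \setminus \bigcup_j C_j^* \subseteq B$, which is independent; dually, $\varphi_{\mathcal{A},\mathcal{A^*}}(\overrightarrow{O})$ is spanning iff its complement contains no cocircuit iff $J = \emptyset$. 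Parametrizing orientations of $\mathcal{M}$ by triples $(B,I,J)$, a circuit reversal of $\overrightarrow{O}$ preserves $B$ (since it preserves the circuit-cocircuit class) and preserves the cocircuit component $\bigcup_j C_j^*$; hence circuit reversal classes of $\mathcal{M}$ are indexed by pairs $(B,J)$, and among the orientations with fixed $(B,J)$ exactly one has $I = \emptyset$. This yields part~(2), and part~(3) follows by the dual argument, indexing cocircuit reversal classes by $(B,I)$.

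The main obstacle is expected to be step~(a) of the injectivity argument in part~(1): recovering the basis $B$ from $S = \varphi_{\mathcal{A},\mathcal{A^*}}(\overrightarrow{O})$ alone. This requires lifting the dissecting and triangulating hypotheses, which are stated in terms of pairs of oriented bases, to a statement about arbitrary subsets of $E$, and is likely to absorb most of the technical work, paralleling the proof machinery of Theorem~\ref{main1}.
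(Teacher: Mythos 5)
Your overall architecture is sound and your treatment of parts (2) and (3) essentially matches the paper's: the paper also argues that $\varphi_{\mathcal{A},\mathcal{A^*}}(\overrightarrow{O})$ is independent iff $I=\emptyset$ iff $\overrightarrow{O}$ and $f_{\mathcal{A},\mathcal{A^*}}(B)$ lie in the same cocircuit reversal class, and then invokes Theorem~\ref{main1}. Your reduction of part (1) to injectivity via the count $2^{|E|}=|2^E|$, and the split into steps (a) and (b), is a legitimate strategy. Two small caveats: Lemma~\ref{orientation1} as stated asserts only existence of the disjoint decomposition, not uniqueness; both your well-definedness step and your step (b) actually rest on the fact that the difference $f_{\mathcal{A},\mathcal{A^*}}(B)-\overrightarrow{O}$ decomposes uniquely into a flow part and a cut part (orthogonality of $\ker_\mathbb{R}(M)$ and $\im_\mathbb{R}(M^T)$), together with the observation that $\sum_i\overrightarrow{C_i}$ is determined by its restriction to $E\setminus B$ (and dually for the cocircuit part). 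These are fixable but are not consequences of Lemma~\ref{orientation1} alone.

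The genuine gap is step (a), which you explicitly defer, and it is not a routine extension of the proof of Theorem~\ref{main1}. In the paper, the entire content of Theorem~\ref{main2}(1) is carried by a stronger statement: $\varphi_{\mathcal{A},\mathcal{A^*}}$ is \emph{tiling}, meaning that for distinct orientations $\overrightarrow{O_A},\overrightarrow{O_B}$ there is an edge $e$ with $\overrightarrow{O_A}|_e\neq\overrightarrow{O_B}|_e$ and $e\in\varphi_{\mathcal{A},\mathcal{A^*}}(\overrightarrow{O_A})\bigtriangleup\varphi_{\mathcal{A},\mathcal{A^*}}(\overrightarrow{O_B})$; this simultaneously handles your steps (a) and (b) and is needed elsewhere (it is equivalent to a half-open decomposition of the cube). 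Its proof is a substantial case analysis of $\overrightarrow{F}=\overrightarrow{B_1}\cap(-\overrightarrow{B_2})$ and $\overrightarrow{F^*}=(\overrightarrow{B_1^*}\cap(-\overrightarrow{B_2^*}))^c$ over a partition of $E$ into $28$ classes, and the new ideas you would need (none of which appear in the proof of Theorem~\ref{main1}) are: (i) under the negation of the tiling condition, $C_1\cap C_2^*=C_1^*\cap C_2=\emptyset$, by the orthogonality axiom (Lemma~\ref{orientation2}); (ii) every component of $\overrightarrow{C_1}-\overrightarrow{C_2}$ is a potential circuit of $\overrightarrow{F}$, and every component of $\overrightarrow{C_2^*}-\overrightarrow{C_1^*}$ is a potential cocircuit of $\overrightarrow{F^*}$ (this uses the conformal decomposition, Lemma~\ref{conformal}); and (iii) a transfer argument showing that a potential cocircuit of $\overrightarrow{F}$ supplied by the dissecting hypothesis is supported on edges where $\overrightarrow{F}$ and $\overrightarrow{F^*}$ agree (via Lemma~\ref{exclusive}), hence is a potential cocircuit of $\overrightarrow{F^*}$, contradicting the triangulating hypothesis. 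Without this machinery — or a genuine substitute — the core of part (1) remains unproved, so as it stands the proposal is a plan rather than a proof.
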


\begin{Rem}
We can apply Theorem~\ref{main2} to extend and generalize the Bernardi bijection; see Corollary~\ref{Bernardi-extend} for a formal statement. In \cite{Bernardi}, the Bernardi bijection is also extended to a subgraph-orientation correspondence. However, Bernardi's extension is different from the bijection $\varphi_{\mathcal{A}_\text{B},\mathcal{A}_q^*}$ in Theorem~\ref{main2} in general. 
\end{Rem}

\subsection{Signatures and the two atlases}\label{sign}
In Section~\ref{bijection and atlas}, we have seen that acyclic signatures $\sigma$ and $\sigma^*$ induce triangulating atlases $\mathcal{A}_\sigma$ and $\mathcal{A^*_{\sigma^*}}$, respectively, and hence we may apply our main theorems to the BBY bijection. In this section, we will define a new class of signatures, called \emph{triangulating signatures}, which are in one-to-one correspondence with triangulating atlases and generalize acyclic signatures. Note that in \cite{BBY}, the BBY map is proved to be bijective onto \emph{$(\sigma,\sigma^*)$-compatible orientations}, which are representatives of the circuit-cocircuit reversal classes. We will also generalize this result. In particular, we will reformulate Theorem~\ref{main1} and Theorem~\ref{main2} in terms of the signatures and the compatible orientations (for triangulating atlases).

Recall in Example~\ref{sigma atlas} that from signatures $\sigma$ and $\sigma^*$, we may construct atlases $\mathcal{A_\sigma}$ and $\mathcal{A^*_{\sigma^*}}$. It is natural to ask: (1) Which signatures induce triangulating atlases? (2) Is any triangulating atlas induced by a signature? 

The following definition and theorem answer these two questions. 

\begin{definition}
    \begin{enumerate}
        \item A circuit signature $\sigma$ is said to be \emph{triangulating} if for any $\overrightarrow{B}\in\mathcal{A_\sigma}$ and any signed circuit $\overrightarrow{C}\subseteq\overrightarrow{B}$, $\overrightarrow{C}$ is in the signature $\sigma$.
        \item A cocircuit signature $\sigma^*$ is said to be \emph{triangulating} if for any $\overrightarrow{B^*}\in\mathcal{A^*_{\sigma^*}}$ and any signed cocircuit $\overrightarrow{C^*}\subseteq\overrightarrow{B^*}$, $\overrightarrow{C^*}$ is in the signature $\sigma^*$.
    \end{enumerate}
\end{definition}

\begin{Rem}\label{atlas-free}
In an atlas-free manner, the definition of triangulating circuit signatures is as follows: a circuit signature $\sigma$ is said to be triangulating if for any basis $B$, any signed circuit that is the sum of signed fundamental circuits (for $B$) in $\sigma$ is also in $\sigma$ (see Lemma~\ref{fundamental}). A similar definition works for the cocircuit signatures.  
\end{Rem}

\begin{theorem}\label{trig-intro}
The maps
    \begin{align*}
    \alpha:\{\text{triangulating circuit sig. of }\mathcal{M}\} & \to \{\text{triangulating external atlases of }\mathcal{M}\} \\
    \sigma & \mapsto \mathcal{A_\sigma}
    \end{align*}
and
    \begin{align*}
    \alpha^*:\{\text{triangulating cocircuit sig. of }\mathcal{M}\} & \to \{\text{triangulating internal atlases of }\mathcal{M}\} \\
    \sigma^* & \mapsto \mathcal{A}^*_{\sigma^*}
    \end{align*}
are bijections. 
\end{theorem}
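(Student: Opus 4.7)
The plan is to prove the statement for $\alpha$; the claim for $\alpha^*$ follows by applying the same argument to the dual regular matroid $\mathcal{M}^*$, using that signed cocircuits of $\mathcal{M}$ are signed circuits of $\mathcal{M}^*$ and that an internally oriented basis of $\mathcal{M}$ at $B$ coincides, as a fourientation, with an externally oriented basis of $\mathcal{M}^*$ at $E\setminus B$. Under this correspondence Definition~\ref{key def}(3) translates into Definition~\ref{key def}(2) for $\mathcal{M}^*$, so triangulating internal atlases of $\mathcal{M}$ become triangulating external atlases of $\mathcal{M}^*$, and similarly for the signatures.

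For injectivity of $\alpha$, I observe that $\mathcal{A}_\sigma$ determines $\sigma$ on every fundamental circuit: for each basis $B$ and each external edge $e\in E\setminus B$, the one-way arc of $e$ in $\overrightarrow{B}\in\mathcal{A}_\sigma$ is, by construction, the arc of $\sigma(C(B,e))$ on $e$, and that single arc pins down the signed circuit. Since every circuit $C$ arises as $C(B,e)$ for some basis $B$ and some $e\in C$---pick any $e\in C$ and extend the independent set $C\setminus\{e\}$ to a basis---the atlas determines $\sigma$ on all circuits.

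The main content is surjectivity. Given a triangulating external atlas $\mathcal{A}$, I build a candidate $\sigma$ by choosing, for each circuit $C$, a pair $(B_C,e_C)$ with $e_C\in C\setminus B_C$ and $C=C(B_C,e_C)$, and declaring $\sigma(C)$ to be the signed version of $C$ whose arc on $e_C$ agrees with the arc of $e_C$ in $\overrightarrow{B_C}$. The main obstacle is proving well-definedness: for any two such pairs $(B_1,e_1)$ and $(B_2,e_2)$ for the same $C$, the prescriptions must give the same signed circuit. The strategy is a contradiction argument. Assuming the prescriptions give opposite signed circuits $\sigma_1(C)=-\sigma_2(C)$, one checks arc-by-arc that $\sigma_1(C)\subseteq\overrightarrow{B_1}\cap(-\overrightarrow{B_2})$: on each $f\in C\setminus\{e_1,e_2\}$ one has $f\in B_1\cap B_2$, hence $f$ is bioriented on both sides; on $e_1$ the arc of $\sigma_1(C)$ lies in $\overrightarrow{B_1}$ by construction, and it also lies in $-\overrightarrow{B_2}$ because either $e_1\in B_2$ (so $e_1$ is bioriented there) or $e_1=e_2$, in which case the disagreement hypothesis itself forces $\overrightarrow{B_1}$ and $\overrightarrow{B_2}$ to orient $e_1$ oppositely; the check on $e_2$ is symmetric. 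This produces a potential circuit in $\overrightarrow{B_1}\cap(-\overrightarrow{B_2})$ with $B_1\neq B_2$, contradicting that $\mathcal{A}$ is triangulating.

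Once well-definedness holds, $\mathcal{A}_\sigma=\mathcal{A}$ is immediate from the construction. It remains to show that $\sigma$ is itself triangulating, i.e., $\sigma(C)=\overrightarrow{C}$ whenever $\overrightarrow{C}\subseteq\overrightarrow{B}$ with $\overrightarrow{B}\in\mathcal{A}$. If $B_C=B$, both $\sigma(C)$ and $\overrightarrow{C}$ have arc on the external edge $e_C$ matching $\overrightarrow{B}$, so they coincide; if $B_C\neq B$, essentially the same arc-by-arc check as above shows that $\sigma(C)\neq\overrightarrow{C}$ would place $\sigma(C)$ inside $\overrightarrow{B_C}\cap(-\overrightarrow{B})$ as a potential circuit, again contradicting the triangulating property of $\mathcal{A}$. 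This yields surjectivity and completes the plan.
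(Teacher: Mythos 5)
Your argument for $\alpha$ is essentially the paper's own proof (Theorem~\ref{trig}): injectivity because the atlas determines $\sigma$ on fundamental circuits and every circuit is fundamental for some basis (the paper's Lemma~\ref{funda}); surjectivity by reconstructing $\sigma$ from fundamental circuits, with the triangulating hypothesis on $\mathcal{A}$ giving both well-definedness and the triangulating property of the reconstructed $\sigma$. Your arc-by-arc verification that $\sigma_1(C)\subseteq\overrightarrow{B_1}\cap(-\overrightarrow{B_2})$ is a spelled-out version of the paper's one-line observation that $\overrightarrow{C}\subseteq\overrightarrow{B_1}$ and $-\overrightarrow{C}\subseteq\overrightarrow{B_2}$ together give this containment. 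Your treatment of $\alpha^*$ by passing to $\mathcal{M}^*$ is a legitimate alternative to the paper's ``the same method can be used''; the translation is correct, though it is Definition~\ref{key def}(4), not (3), that becomes Definition~\ref{key def}(2) for $\mathcal{M}^*$ after unwinding $-\overrightarrow{F}^c$ for $\overrightarrow{F}=(\overrightarrow{B_1^*}\cap(-\overrightarrow{B_2^*}))^c$.

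There is one genuine omission: you never verify that $\alpha$ lands in its stated codomain, i.e., that $\mathcal{A}_\sigma$ is a \emph{triangulating} atlas whenever $\sigma$ is a triangulating signature. Injectivity and surjectivity as you prove them do not supply this --- a priori some triangulating $\sigma$ could map to a non-triangulating atlas, and then $\alpha$ would not even be a map between the two named sets. The paper opens its proof with exactly this check, and the fix is short and in the spirit of your other computations: if $B_1\neq B_2$ and a signed circuit satisfied $\overrightarrow{C}\subseteq\overrightarrow{B_1}\cap(-\overrightarrow{B_2})$, then $\overrightarrow{C}\subseteq\overrightarrow{B_1}$ and $-\overrightarrow{C}\subseteq\overrightarrow{B_2}$, so the triangulating property of $\sigma$ would force both $\overrightarrow{C}\in\sigma$ and $-\overrightarrow{C}\in\sigma$, which is impossible. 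Add this paragraph (it transports through your duality reduction for $\alpha^*$), and the proof is complete.
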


\begin{Rem}\label{rem-nonexample}
For a dissecting external atlas $\mathcal{A}$, it is possible for there to be no circuit signature $\sigma$ such that $\mathcal{A}_\sigma=\mathcal{A}$. Consider the example in Figure~\ref{mapexample}. The two parallel edges form a circuit $C$. If we have $\mathcal{A}_\sigma=\mathcal{A}$ for some circuit signature $\sigma$, then $\sigma(C)$ is both clockwise and counterclockwise due to the leftmost externally oriented basis and the rightmost externally oriented basis, respectively. 
\end{Rem}

\begin{Rem}
Acyclic signatures are all triangulating; see Lemma~\ref{acyclic-tri}. There exists a triangulating signature that is not acyclic; see Proposition~\ref{nonex}. 
\end{Rem}

A nice thing about the acyclic signatures is that the associated compatible orientations (defined below) form representatives of orientation classes (proved in \cite{BBY}). The triangulating signatures also have this property; see the proposition below.

\begin{definition}\label{def4}
Let $\mathcal{M}$ be a regular matroid, $\sigma$ be a circuit signature, $\sigma^*$ be a cocircuit signature, and $\overrightarrow{O}$ be an orientation of $\mathcal{M}$ 
\begin{enumerate}
    \item The orientation $\overrightarrow{O}$ is said to be \emph{$\sigma$-compatible} if any signed circuit in the orientation is in $\sigma$. 
    \item The orientation $\overrightarrow{O}$ is said to be \emph{$\sigma^*$-compatible} if any signed cocircuit in the orientation is in $\sigma^*$. 
    \item The orientation $\overrightarrow{O}$ is said to be \emph{($\sigma,\sigma^*$)-compatible} if it is both $\sigma$-compatible and $\sigma^*$-compatible.
\end{enumerate}
\end{definition}

\begin{proposition}\label{prop: triangulating sign rep}
Suppose $\sigma$ and $\sigma^*$ are \emph{triangulating} signatures. 
\begin{enumerate}
    \item The set of $(\sigma, \sigma^*)$-compatible orientations is a representative set of the circuit-cocircuit reversal classes of $\mathcal{M}$.
    \item The set of $\sigma$-compatible orientations is a representative set of the circuit reversal classes of $\mathcal{M}$.
    \item The set of $\sigma^*$-compatible orientations is a representative set of the cocircuit reversal classes of $\mathcal{M}$.
\end{enumerate}
\end{proposition}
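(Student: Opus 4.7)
The plan is to deduce the three parts from Theorems~\ref{main1} and \ref{main2} via Theorem~\ref{trig-intro}. Since $\sigma$ and $\sigma^*$ are triangulating, the atlases $\mathcal{A}:=\mathcal{A}_\sigma$ and $\mathcal{A}^*:=\mathcal{A}^*_{\sigma^*}$ are triangulating, and hence dissecting by the remark following Definition~\ref{key def}; so both theorems apply to this pair. The strategy is to match, respectively, the $(\sigma,\sigma^*)$-compatible, $\sigma$-compatible, and $\sigma^*$-compatible orientations with the images under $f_{\mathcal{A},\mathcal{A}^*}$ and $\varphi^{-1}_{\mathcal{A},\mathcal{A}^*}$ of the bases, independent sets, and spanning sets of $\mathcal{M}$.

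For part (1), I first show that the image of $f_{\mathcal{A},\mathcal{A}^*}$ is exactly the set of $(\sigma,\sigma^*)$-compatible orientations. The inclusion $\subseteq$ is immediate from the definition of a triangulating signature: every potential circuit of $\overrightarrow{B}\cap\overrightarrow{B^*}$ is a potential circuit of $\overrightarrow{B}$ and so belongs to $\sigma$, and symmetrically for $\sigma^*$. For the reverse inclusion, let $\overrightarrow{O}$ be $(\sigma,\sigma^*)$-compatible and let $B$ be the unique basis supplied by Theorem~\ref{main1} with $[\overrightarrow{O}]=[f_{\mathcal{A},\mathcal{A}^*}(B)]$; write $\overrightarrow{O}$ as the result of reversing disjoint signed circuits $\{\overrightarrow{C_i}\}_{i\in I}$ and signed cocircuits $\{\overrightarrow{C^*_j}\}_{j\in J}$ inside $f_{\mathcal{A},\mathcal{A}^*}(B)$ via Lemma~\ref{orientation1}. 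If some $\overrightarrow{C_i}$ existed, then $\overrightarrow{C_i}\subseteq \overrightarrow{B}$ would give $\overrightarrow{C_i}\in\sigma$, while orthogonality of signed circuits and signed cocircuits sitting inside a single orientation forces the supports of $C_i$ and the $C^*_j$ to be disjoint, so $-\overrightarrow{C_i}$ survives all cocircuit reversals as a potential circuit of $\overrightarrow{O}$; $\sigma$-compatibility of $\overrightarrow{O}$ would then force $-\overrightarrow{C_i}\in\sigma$ as well, contradicting the definition of a signature. The symmetric argument rules out any $\overrightarrow{C^*_j}$, so $\overrightarrow{O}=f_{\mathcal{A},\mathcal{A}^*}(B)$, and Theorem~\ref{main1} delivers part (1).

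For parts (2) and (3), I plan to characterize when $\varphi_{\mathcal{A},\mathcal{A}^*}(\overrightarrow{O})=(B\cup \biguplus_{i\in I} C_i)\setminus\biguplus_{j\in J} C^*_j$ is independent or spanning. Using the same orthogonality observation, the edge sets $\biguplus_i C_i$ and $\biguplus_j C^*_j$ are disjoint, so each $C_i$ is contained entirely in $\varphi_{\mathcal{A},\mathcal{A}^*}(\overrightarrow{O})$ and each $C^*_j$ is disjoint from it; hence $\varphi_{\mathcal{A},\mathcal{A}^*}(\overrightarrow{O})$ is independent iff $I=\emptyset$ and spanning iff $J=\emptyset$. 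Rerunning the signature argument of part (1) one side at a time then shows that $I=\emptyset$ is equivalent to $\overrightarrow{O}$ being $\sigma$-compatible, and $J=\emptyset$ to $\overrightarrow{O}$ being $\sigma^*$-compatible; the nontrivial direction here is that cocircuit reversals preserve $\sigma$-compatibility, which is again immediate from orthogonality. Parts (2) and (3) now follow from Theorem~\ref{main2}(2) and (3). The main obstacle I expect is this orthogonality bookkeeping that controls both the "$-\overrightarrow{C_i}$ survives" step and the disjointness of $\biguplus C_i$ from $\biguplus C^*_j$; I plan to isolate it as a short lemma stating that a potential circuit and a potential cocircuit of any orientation have disjoint supports, and then invoke it uniformly in all three parts.
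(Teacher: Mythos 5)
Your proposal is correct and takes essentially the same route as the paper: part (1) is the paper's Theorem~\ref{tri-theorem} (image of $\text{BBY}_{\sigma,\sigma^*}$ is contained in the compatible orientations via Lemma~\ref{compatibleimage}, is a representative set via Theorem~\ref{main1}, and each class holds at most one compatible orientation by the Lemma~\ref{orientation1}/orthogonality argument you give), and parts (2)--(3) are obtained exactly as in Theorem~\ref{tri-theorem2} and its corollary, by matching ``$I=\emptyset$'' with independence and with $\sigma$-compatibility before invoking Theorem~\ref{main2}(2)(3).
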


To reformulate Theorem~\ref{main1} and Theorem~\ref{main2} in terms of signatures and compatible orientations, we write
\[\text{BBY}_{\sigma,\sigma^*}=f_{\mathcal{A_\sigma},\mathcal{A^*_{\sigma^*}}}\text{ and }\varphi_{\sigma,\sigma^*}=\varphi_{\mathcal{A_\sigma},\mathcal{A^*_{\sigma^*}}}.\]
They are exactly the BBY bijection in \cite{BBY} and the extended BBY bijection in \cite{D2} when the two signatures are acyclic. By the two theorems and a bit of extra work, we have the following theorems, which generalize the work in \cite{BBY} and \cite{D2}, respectively. 
\begin{theorem}\label{main1-sign}
Suppose $\sigma$ and $\sigma^*$ are \emph{triangulating} signatures of a regular matroid $\mathcal{M}$. The map $\text{BBY}_{\sigma,\sigma^*}$ is a bijection between the bases of $\mathcal{M}$ and the $(\sigma,\sigma^*)$-compatible orientations of $\mathcal{M}$. 
\end{theorem}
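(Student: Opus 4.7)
The plan is to reduce Theorem~\ref{main1-sign} to the atlas-theoretic Theorem~\ref{main1}, using Theorem~\ref{trig-intro} to pass between signatures and atlases, and then to cut out the $(\sigma,\sigma^*)$-compatible orientations inside each class using the Proposition immediately preceding Theorem~\ref{main1-sign}.

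First I would translate the hypothesis. Applying Theorem~\ref{trig-intro}, the triangulating signatures $\sigma$ and $\sigma^*$ correspond to triangulating atlases $\mathcal{A}_\sigma = \alpha(\sigma)$ and $\mathcal{A}^*_{\sigma^*} = \alpha^*(\sigma^*)$. By the Remark following Definition~\ref{key def}, triangulating atlases are automatically dissecting, so Theorem~\ref{main1} applies and yields that
\[
\overline{f}_{\mathcal{A}_\sigma,\mathcal{A}^*_{\sigma^*}} \colon B \longmapsto [\overrightarrow{B}\cap\overrightarrow{B^*}]
\]
is a bijection from the bases of $\mathcal{M}$ onto the circuit-cocircuit reversal classes.

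Next I would verify that the orientation $\overrightarrow{O} := \text{BBY}_{\sigma,\sigma^*}(B) = \overrightarrow{B}\cap\overrightarrow{B^*}$ is always $(\sigma,\sigma^*)$-compatible. Since the external edges are bioriented in $\overrightarrow{B^*}$ and the internal edges in $\overrightarrow{B}$, each edge contributes exactly one arc to the intersection, so $\overrightarrow{O}$ is a genuine orientation and, as sets of arcs, $\overrightarrow{O}\subseteq\overrightarrow{B}$ and $\overrightarrow{O}\subseteq\overrightarrow{B^*}$. Hence any signed circuit $\overrightarrow{C}\subseteq\overrightarrow{O}$ is in particular contained in $\overrightarrow{B}\in\mathcal{A}_\sigma$, and the defining property of a triangulating circuit signature forces $\overrightarrow{C}\in\sigma$; the dual argument applied to signed cocircuits of $\overrightarrow{O}$ inside $\overrightarrow{B^*}$ gives $\sigma^*$-compatibility.

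Finally, the preceding Proposition states that each circuit-cocircuit reversal class contains exactly one $(\sigma,\sigma^*)$-compatible orientation, so the quotient map from $(\sigma,\sigma^*)$-compatible orientations to classes is a bijection. Since $\text{BBY}_{\sigma,\sigma^*}(B)$ is $(\sigma,\sigma^*)$-compatible and also represents the class $\overline{f}(B)$, the map $\text{BBY}_{\sigma,\sigma^*}$ factors as $\overline{f}$ followed by the inverse of this quotient bijection, and is therefore itself a bijection onto the $(\sigma,\sigma^*)$-compatible orientations. The genuine combinatorial work has already been absorbed into Theorem~\ref{main1} and the preceding Proposition, so the main obstacle here is mostly bookkeeping; the one subtle point worth stressing is the step $\overrightarrow{C}\subseteq\overrightarrow{O}\Rightarrow\overrightarrow{C}\subseteq\overrightarrow{B}$, which relies crucially on an external arc of $\overrightarrow{O}$ being inherited from $\overrightarrow{B}$ rather than from the bioriented external side of $\overrightarrow{B^*}$, and dually for cocircuits.
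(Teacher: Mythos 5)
Your proof is correct and follows essentially the same route as the paper's: apply Theorem~\ref{main1} to the triangulating (hence dissecting) atlases $\mathcal{A}_\sigma$ and $\mathcal{A}^*_{\sigma^*}$, observe that the image of $\text{BBY}_{\sigma,\sigma^*}$ consists of $(\sigma,\sigma^*)$-compatible orientations (this is the paper's Lemma~\ref{compatibleimage}, proved by the same containment argument you give), and conclude via uniqueness of the compatible representative in each circuit-cocircuit reversal class. The only caveat is that the Proposition you cite is established in the paper simultaneously with this theorem, so to avoid any appearance of circularity you should invoke only its ``at most one compatible orientation per class'' half, which follows directly from Lemma~\ref{orientation1} and the definition of compatibility; the ``at least one'' half is exactly what your first two steps supply.
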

\begin{theorem}\label{main2-sign}
Suppose $\sigma$ and $\sigma^*$ are \emph{triangulating} signatures of a regular matroid $\mathcal{M}$ with ground set $E$.  

(1) The map 
\begin{align*}
\varphi_{\sigma,\sigma^*}:\{\text{orientations of }\mathcal{M}\} & \to \{\text{subsets of } E\} \\
\overrightarrow{O} & \mapsto (\text{BBY}_{\sigma,\sigma^*}^{-1}(\overrightarrow{O^{cp}})\cup \biguplus_{i\in I}C_i)\backslash \biguplus_{j\in J}C_j^*
\end{align*}
is a bijection, where $\overrightarrow{O^{cp}}$ the (unique) ($\sigma,\sigma^*$)-compatible orientation obtained by reversing disjoint signed circuits $\{\overrightarrow{C_i}\}_{i\in I}$ and signed cocircuits $\{\overrightarrow{C_j^*}\}_{j\in J}$ in $\overrightarrow{O}$.

(2) The map $\varphi_{\sigma,\sigma^*}$ specializes to the bijection
\begin{align*}
\varphi_{\sigma,\sigma^*}: \{\sigma\text{-compatible orientations}\} & \to \{\text{independent sets}\} \\
\overrightarrow{O} & \mapsto \text{BBY}_{\sigma,\sigma^*}^{-1}(\overrightarrow{O^{cp}})\backslash \biguplus_{j\in J}C_j^*.
\end{align*}

(3) The map $\varphi_{\sigma,\sigma^*}$ specializes to the bijection
\begin{align*}
\varphi_{\sigma,\sigma^*}:\{\sigma^*\text{-compatible orientations}\} & \to \{\text{spanning sets}\} \\
\overrightarrow{O} & \mapsto \text{BBY}_{\sigma,\sigma^*}^{-1}(\overrightarrow{O^{cp}})\cup \biguplus_{i\in I}C_i.
\end{align*}
\end{theorem}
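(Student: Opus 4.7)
The plan is to deduce the theorem from Theorem~\ref{main2} applied to the pair $(\mathcal{A}_\sigma, \mathcal{A}^*_{\sigma^*})$, and then to pin down the subsets of orientations corresponding to independent and spanning sets via the sign-selection property of $\sigma$ and $\sigma^*$. Since $\sigma$ and $\sigma^*$ are triangulating, Theorem~\ref{trig-intro} ensures $\mathcal{A}_\sigma$ and $\mathcal{A}^*_{\sigma^*}$ are triangulating (and in particular dissecting), so Theorem~\ref{main2} is applicable; by construction $\varphi_{\sigma,\sigma^*} = \varphi_{\mathcal{A}_\sigma,\mathcal{A}^*_{\sigma^*}}$ and $\text{BBY}_{\sigma,\sigma^*} = f_{\mathcal{A}_\sigma,\mathcal{A}^*_{\sigma^*}}$. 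For part (1), I only need to translate the basis $B$ of Definition~\ref{def-ext} into signature language: the proposition preceding Theorem~\ref{main1-sign} guarantees a unique $(\sigma,\sigma^*)$-compatible orientation $\overrightarrow{O^{cp}}$ in the class $[\overrightarrow{O}]$, and Theorem~\ref{main1-sign} then identifies $B = \overline{f}^{-1}_{\mathcal{A}_\sigma,\mathcal{A}^*_{\sigma^*}}([\overrightarrow{O}])$ with $\text{BBY}^{-1}_{\sigma,\sigma^*}(\overrightarrow{O^{cp}})$; the formula of Theorem~\ref{main2}(1) then reads exactly as in part (1).

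For part (2), my strategy is to establish the set equality $\{\sigma\text{-compatible orientations}\} = \varphi_{\sigma,\sigma^*}^{-1}(\{\text{independent sets of }\mathcal{M}\})$ and let the bijection follow by restriction. Start with a $\sigma$-compatible $\overrightarrow{O}$ and consider the signed circuits $\{\overrightarrow{C_i}\}_{i\in I}$ and signed cocircuits $\{\overrightarrow{C_j^*}\}_{j\in J}$ whose reversal in $\overrightarrow{O}$ yields $\overrightarrow{O^{cp}}$. Each $\overrightarrow{C_i}$ is a signed circuit contained in $\overrightarrow{O}$, so $\overrightarrow{C_i}\in\sigma$ by $\sigma$-compatibility; after reversal the arcs on $C_i$ in $\overrightarrow{O^{cp}}$ form $-\overrightarrow{C_i}$, which is a signed circuit of $\overrightarrow{O^{cp}}$, hence $-\overrightarrow{C_i}\in\sigma$ as well. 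But a circuit signature selects exactly one of $\pm \overrightarrow{C}$, giving a contradiction, so $I=\emptyset$. Then $\varphi_{\sigma,\sigma^*}(\overrightarrow{O}) = \text{BBY}^{-1}_{\sigma,\sigma^*}(\overrightarrow{O^{cp}}) \setminus \biguplus_{j\in J} C_j^*$ is a subset of a basis and hence independent, giving $\{\sigma\text{-compatible}\} \subseteq \varphi_{\sigma,\sigma^*}^{-1}(\{\text{independent sets}\})$. Both sides are complete systems of representatives for the circuit reversal classes (the left by the proposition preceding Theorem~\ref{main1-sign}, the right by Theorem~\ref{main2}(2)), hence have equal cardinality; the inclusion forces equality.

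Part (3) is proved by the mirror argument: for $\sigma^*$-compatible $\overrightarrow{O}$, the signed cocircuits $\overrightarrow{C_j^*}$ lie in $\overrightarrow{O}$ and their reversals $-\overrightarrow{C_j^*}$ lie in $\overrightarrow{O^{cp}}$, so both would belong to $\sigma^*$, a contradiction unless $J=\emptyset$; then $\varphi_{\sigma,\sigma^*}(\overrightarrow{O}) = \text{BBY}^{-1}_{\sigma,\sigma^*}(\overrightarrow{O^{cp}}) \cup \biguplus_{i\in I} C_i$ is a superset of a basis and hence spanning, and the same transversal/cardinality argument, now invoking Theorem~\ref{main2}(3), closes the case. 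The main subtlety I expect is justifying that $-\overrightarrow{C_i}$ is genuinely a signed circuit of $\overrightarrow{O^{cp}}$ (and dually for cocircuits), which relies on the decomposition lemma cited in Definition~\ref{def-ext} producing signed circuits and cocircuits whose supports are disjoint so that the reversals act independently; once this is invoked, the remainder is a short cardinality bookkeeping.
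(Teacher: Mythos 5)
Part (1) of your proposal is correct and is exactly the paper's route: combine Theorem~\ref{main2}(1) with the fact that $\mathcal{A}_\sigma$, $\mathcal{A}^*_{\sigma^*}$ are triangulating and that each circuit-cocircuit reversal class contains a unique $(\sigma,\sigma^*)$-compatible orientation, namely the image of the corresponding basis under $\text{BBY}_{\sigma,\sigma^*}$. Your proof of the inclusion $\{\sigma\text{-compatible}\}\subseteq\varphi_{\sigma,\sigma^*}^{-1}(\{\text{independent sets}\})$ is also correct (the argument that $\overrightarrow{C_i}\in\sigma$ and $-\overrightarrow{C_i}\in\sigma$ forces $I=\emptyset$ is precisely the right use of compatibility of both $\overrightarrow{O}$ and $\overrightarrow{O^{cp}}$).

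The gap is in how you close parts (2) and (3). To upgrade your inclusion to an equality you invoke that the $\sigma$-compatible orientations form a complete system of representatives of the circuit reversal classes, citing the proposition in Section~\ref{sign}. But in the paper's logical order, parts (2) and (3) of that proposition are \emph{deduced from} Theorem~\ref{main2-sign}(2)(3) (they appear as the corollary immediately after Theorem~\ref{tri-theorem2} and are proved as ``a direct consequence'' of it); only part (1), about $(\sigma,\sigma^*)$-compatible orientations and circuit-cocircuit reversal classes, is available beforehand. So your cardinality argument is circular: the statement that every circuit reversal class contains a $\sigma$-compatible orientation is exactly the nontrivial content you are trying to establish, and there is no independent proof of it earlier in the paper. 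The fix is to prove the reverse inclusion directly, which is what the paper's chain of equivalences amounts to: if $A=\varphi_{\sigma,\sigma^*}(\overrightarrow{O})$ is independent then $I=\emptyset$ (otherwise $C_i\subseteq A$, since the circuits and cocircuits produced by Lemma~\ref{orientation1} are pairwise disjoint), so $\overrightarrow{O}$ and $\overrightarrow{O^{cp}}$ differ only on $\biguplus_{j\in J}C_j^*$ with each $\overrightarrow{C_j^*}\subseteq\overrightarrow{O}$; by the orthogonality axiom (Lemma~\ref{orientation2}) any signed circuit $\overrightarrow{C}\subseteq\overrightarrow{O}$ must be disjoint from every $C_j^*$, hence $\overrightarrow{C}\subseteq\overrightarrow{O^{cp}}$ and $\overrightarrow{C}\in\sigma$, i.e., $\overrightarrow{O}$ is $\sigma$-compatible. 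With that short argument in place of the counting step, your proof goes through, and the dual argument repairs part (3).
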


The definition of triangulating signatures is somewhat indirect. However, in the case of \emph{graphs}, we have the following nice description of the triangulating \emph{cycle} signatures. We will show a proof given by Gleb Nenashev. We do not know whether a similar statement holds for regular matroids. 

\begin{theorem}\label{triangular}\cite{Nenashev}
A \emph{cycle} signature $\sigma$ of a \emph{graph} $G$ is triangulating if and only if for any three directed cycles in $\sigma$, their sum (as vectors in $\mathbb{Z}^E$) is not zero. 
\end{theorem}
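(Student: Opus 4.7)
For the ``only if'' direction, the plan is to work contrapositively: suppose $\overrightarrow{C_1}, \overrightarrow{C_2}, \overrightarrow{C_3} \in \sigma$ satisfy $\overrightarrow{C_1}+\overrightarrow{C_2}+\overrightarrow{C_3}=0$, and produce a spanning tree $B$ for which $C_1$ and $C_2$ are both fundamental cycles, so that the atlas-free triangulating condition (Remark~\ref{atlas-free}) forces $-\overrightarrow{C_3} = \sigma(C(B,e_1))+\sigma(C(B,e_2))\in\sigma$, contradicting $\overrightarrow{C_3}\in\sigma$. The relation $\overrightarrow{C_3}=-\overrightarrow{C_1}-\overrightarrow{C_2}$ combined with the requirement that all three be signed circuits (entries in $\{0,\pm 1\}$) forces $\mathrm{supp}(\overrightarrow{C_3})=\mathrm{supp}(\overrightarrow{C_1})\triangle\mathrm{supp}(\overrightarrow{C_2})$ and rules out configurations in which $C_1\cap C_2$ has more than one path-component or contains an isolated shared vertex; hence $C_1\cap C_2$ is a single connected path, the subgraph $H:=C_1\cup C_2$ has cycle rank exactly $2$, and its only simple cycles are $C_1, C_2, C_3$. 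Picking edges $e_1\in C_1\setminus C_2$ and $e_2\in C_2\setminus C_1$ (both nonempty by circuit incomparability), the edge set $F:=(C_1\setminus\{e_1\})\cup(C_2\setminus\{e_2\})$ avoids each of the three simple cycles of $H$ and is thus a forest, extendable to the desired spanning tree $B$.

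For the ``if'' direction, assume no three directed cycles in $\sigma$ sum to zero. I would first establish \emph{pair-closedness}: whenever $\overrightarrow{D_1},\overrightarrow{D_2}\in\sigma$ and their sum is a signed circuit $\overrightarrow{E}$, we have $\overrightarrow{E}\in\sigma$ (otherwise $-\overrightarrow{E}\in\sigma$ gives the forbidden triple $\overrightarrow{D_1}+\overrightarrow{D_2}+(-\overrightarrow{E})=0$). Via Remark~\ref{atlas-free}, triangulating reduces to the assertion: for every spanning tree $B$ and signed circuit $\overrightarrow{C}=\sum_{e\in S}\sigma(C(B,e))$ with $S=C\setminus B$, one has $\overrightarrow{C}\in\sigma$. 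I would induct on $k=|S|$; the cases $k\le 1$ are immediate, and $k=2$ is pair-closedness applied directly to the two fundamental-circuit summands.

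The inductive step for $k\ge 3$ rests on the following Key Lemma: \emph{there exists an external edge $e^*\in S$ such that $\overrightarrow{C}-\sigma(C(B,e^*))$ is again a signed circuit}, equivalently, the symmetric difference $C\triangle C(B,e^*)$ is a simple cycle (which occurs iff $C\cap C(B,e^*)$ forms a connected subpath of $C$). Granted the lemma, the shortened sum $\sum_{e\in S\setminus\{e^*\}}\sigma(C(B,e))$ involves only $k-1$ external edges and so lies in $\sigma$ by induction; then $\overrightarrow{C}$ is the sum of two elements of $\sigma$ that is itself a signed circuit, and pair-closedness yields $\overrightarrow{C}\in\sigma$. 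The Key Lemma is the heart of the argument and is the main obstacle I anticipate. I expect its proof to analyze the cyclic order of the external edges $e_1,\ldots,e_k$ around $C$ together with the pairing of internal edges $t\in T:=C\cap B$ with external edges induced by the fundamental cocircuit $C^*(B,t)$, extracting by an extremal (cyclic-adjacency) argument an external edge $e^*$ whose tree-path $P_{e^*}$ meets $T$ only within the two internal paths of $C$ cyclically adjacent to $e^*$. The genuinely graph-theoretic nature of this combinatorial step is consistent with the theorem's restriction to graphs rather than general regular matroids.
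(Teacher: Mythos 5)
Your ``only if'' direction is correct and is essentially the paper's proof of the implication (1)~$\Rightarrow$~(3) of Theorem~\ref{tri-sign} run contrapositively: the paper likewise observes (via Lemma~\ref{theta}) that $C_1\cap C_2$ is a path, deletes one edge of $C_1\setminus C_2$ and one of $C_2\setminus C_1$ from $C_1\cup C_2$, and extends the resulting forest to a spanning tree for which $C_1$ and $C_2$ are both fundamental. Your reduction of ``triangulating'' to pair-closedness plus the fundamental-circuit decomposition is also the paper's strategy.

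The ``if'' direction, however, has a genuine gap: the Key Lemma is false. It can happen that for \emph{every} $e^*\in S$ the vector $\overrightarrow{C}-\sigma(C(B,e^*))$ fails to be a signed circuit. Let $G$ have vertices $v_1,\dots,v_6$ and edges the hexagon $C=v_1v_2\cdots v_6v_1$ together with the five chords $v_1v_3$, $v_3v_5$, $v_5v_2$, $v_2v_4$, $v_4v_6$, and let $B$ be the spanning tree formed by those five chords (the Hamiltonian path $v_1\!-\!v_3\!-\!v_5\!-\!v_2\!-\!v_4\!-\!v_6$). Every edge of $C$ is external, so for each hexagon edge $v_iv_{i+1}$ the only edge shared by $C$ and $C(B,v_iv_{i+1})$ is $v_iv_{i+1}$ itself, and the support of $\overrightarrow{C}-\sigma(C(B,v_iv_{i+1}))$ is exactly $C\bigtriangleup C(B,v_iv_{i+1})$. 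But each tree path joining the endpoints of a hexagon edge passes through a further vertex of $C$ (e.g.\ the path from $v_3$ to $v_2$ is $v_3\!-\!v_5\!-\!v_2$), so this symmetric difference always has a vertex of degree $4$ and is never a simple cycle. Hence no admissible $e^*$ exists and your induction cannot start on this instance. Note that the refinement you sketch for locating $e^*$ (controlling how its tree path meets $T=C\cap B$) is vacuous here since $T=\emptyset$; the obstruction is shared \emph{vertices}, not shared internal edges.

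The paper's route (Lemma~\ref{Gleb}, due to Nenashev) circumvents exactly this: it proves that $\sum_i\overrightarrow{C}(B,\overrightarrow{e_i})$ admits a \emph{complete parenthesization} in which every intermediate two-term sum is a directed cycle, and then invokes pair-closedness repeatedly. The induction there does not peel one fundamental circuit off the total sum; it selects, by a maximal-height argument in the rooted tree, two \emph{cyclically adjacent} external arcs $\overrightarrow{e_k},\overrightarrow{e_{k+1}}$, contracts the segment of $C$ between them to a single new external arc in an enlarged (complete) graph, applies induction to the shorter cycle, and substitutes $\overrightarrow{C}(B,\overrightarrow{e_k})+\overrightarrow{C}(B,\overrightarrow{e_{k+1}})$ back in. In the example above this succeeds: $\overrightarrow{C}(B,\overrightarrow{v_1v_2})+\overrightarrow{C}(B,\overrightarrow{v_2v_3})$ is the directed triangle $v_1\to v_2\to v_3\to v_1$, even though no single summand can be removed from the full sum. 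To complete your argument you would need to replace the Key Lemma by a grouping statement of this adjacent-pair type.
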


\subsection{Lawrence polytopes and the two atlases}\label{Lawrence-intro}

In this subsection, we will introduce a pair of \emph{Lawrence polytopes} $\mathcal{P}$ and $\mathcal{P^*}$ associated to a regular matroid $\mathcal{M}$. We will see that dissections and triangulations of the Lawrence polytopes correspond to the dissecting atlases and triangulating atlases, respectively, which is actually how we derived Definition~\ref{key def}. We will also see that regular triangulations correspond to acyclic signatures. 

Readers can find some information on Lawrence polytopes in the paper \cite{BS} and the books \cite{BVSWZ,Z}. The Lawrence polytopes defined for regular matroids in this paper were rediscovered by the author in attempts to define a dual object to the root polytope studied in \cite{P}; see Section~\ref{intro-motivation} for details.

Recall that $M_{r\times n}$ is a totally unimodular matrix representing $\mathcal{M}$. 

\begin{definition}
\begin{enumerate}
    \item We call \[\begin{pmatrix} M_{r\times n} & {\bf 0} \\  I_{n\times n} &  I_{n\times n} \end{pmatrix}\] the \emph{Lawrence matrix}, where $I_{n\times n}$ is the identity matrix. The columns of the Lawrence matrix are denoted by $P_1, \cdots, P_n$, $P_{-1}, \cdots, P_{-n}\in\mathbb{R}^{n+r}$ in order. 
    
    \item The \emph{Lawrence polytope} $\mathcal{P}\subseteq \mathbb{R}^{n+r}$ of $\mathcal{M}$ is the convex hull of the points $P_1, \cdots, P_n$, $P_{-1}, \cdots, P_{-n}$.
    
    \item If we replace the matrix $M$ in (1) with $M^*_{(n-r)\times n}$ (see Section~\ref{regular}), then we get the Lawrence polytope $\mathcal{P^*}\subseteq\mathbb{R}^{2n-r}$. We use the labels $P_i^*$ for the points generating $\mathcal{P^*}$.  

    \item We further assume that $\mathcal{M}$ is loopless when defining $\mathcal{P}$ and that $\mathcal{M}$ is coloopless when defining $\mathcal{P}^*$, to avoid duplicate columns of the Lawrence matrix. 
\end{enumerate}
\end{definition}
\begin{Rem}
We only need the assumption in (4) for the geometric results in this subsection. In particular, we do not need the assumption for atlases. One can use ``point configurations'' \cite{DRS} to replace polytopes so that the assumption is unnecessary.  
\end{Rem}
\begin{Rem}
Our definition of the Lawrence polytope certainly depends on the matrix we choose. We still say the Lawrence polytope $\mathcal{P}$ (or $\mathcal{P^*}$) of $\mathcal{M}$ for the following two reasons. First, if we fix a total order on the ground set $E$ and fix a reference orientation, then the matrix $M$ is unique up to a multiplication of a matrix in $\mathrm{SL}(r,\mathbb{Z})$ on the left; see \cite{SW}. Hence the resulting Lawrence polytope is also unique in a similar sense. Second, our results involving the Lawrence polytope do not depend on the choice of $M$. 
\end{Rem}
We introduce some basic notions in discrete geometry. 
\begin{definition}
A \emph{simplex} $S$ is the convex hull of some affinely independent points. A \emph{face} of $S$ is a simplex generated by a subset of these points, which could be $S$ or $\emptyset$.
\end{definition}

\begin{definition}\label{tri-diss-def}
Let $\mathcal{P}$ be a polytope of dimension $d$. \begin{enumerate}
    \item If $d+1$ of the vertices of $\mathcal{P}$ form a $d$-dimensional simplex, we call such a simplex a \emph{maximal simplex} of $\mathcal{P}$.
    \item A \emph{dissection} of $\mathcal{P}$ is a collection of maximal simplices of $\mathcal{P}$ such that \begin{enumerate}
        \item[(I)] the union is $\mathcal{P}$, and
        \item[(II)] the relative interiors of any two distinct maximal simplices in the collection are disjoint. 
    \end{enumerate}

    \item If we replace the condition (II) in (2) with the condition (III) that any two distinct maximal simplices in the collection intersect in a common face (which could be empty), then we get a \emph{triangulation}. (See Figure~\ref{tri-dis}.)
\end{enumerate}
\end{definition}

\begin{figure}[ht]
            \centering
            \includegraphics[scale=0.9]{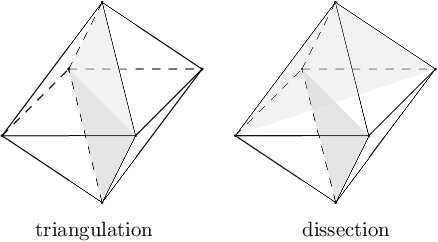}
            \caption{A triangulation and a dissection of an octahedron}
            \label{tri-dis}
\end{figure}  

A triangulation is always a dissection because (III) implies (II). 

The next two theorems build the connection between the geometry of the Lawrence polytopes and the combinatorics of the regular matroid. To state them, we need to label the $2|E|$ arcs of $\mathcal{M}$. Recall that given the matrix $M$, the arcs of $\mathcal{M}$ are the standard unit vectors and their opposites. We denote them by $\overrightarrow{e_1}, \cdots, \overrightarrow{e_n}$ and $\overrightarrow{e_{-1}}, \cdots, \overrightarrow{e_{-n}}$. In particular, $\overrightarrow{e_i}=-\overrightarrow{e_{-i}}$.  
\begin{theorem}\label{3-fold}
We have the following threefold bijections, all of which are denoted by $\chi$. (It should be clear from the context which one we are referring to when we use $\chi$. )
\begin{enumerate}
    \item The Lawrence polytope $\mathcal{P}\subseteq\mathbb{R}^{n+r}$ is an $(n+r-1)$-dimensional polytope whose vertices are exactly the points $P_1, \cdots, P_n, P_{-1}, \cdots, P_{-n}$. Hence we may define a bijection 
    \begin{align*}
    \chi:\{\text{vertices of }\mathcal{P}\} & \to  \{\text{arcs of }\mathcal{M}\} \\
    P_i & \mapsto  \overrightarrow{e_i}
    \end{align*}
    \item The map $\chi$ in (1) induces a bijection
    \begin{align*}
    \chi:\{\text{maximal simplices of }\mathcal{P}\} & \to \{\text{externally oriented bases of }\mathcal{M}\} \\
    \begin{gathered}
        \text{a maximal simplex}\\
        \text{with vertices }\{P_i:i\in I\}
    \end{gathered} & \mapsto \text{the fourientation }\{\chi(P_i):i\in I\}. 
    \end{align*}
    \item The map $\chi$ in (2) induces two bijections
    \begin{align*}
    \chi:\{\text{triangulations of }\mathcal{P}\} & \to \{\text{triangulating external atlases of }\mathcal{M}\} \\
    \begin{gathered}
        \text{a triangulation with} \\
        \text{maximal simplices }\{S_j:j\in J\}
    \end{gathered} & \mapsto \text{the external atlas }\{\chi(S_j):j\in J\}, 
    \end{align*}
    
    and \begin{align*}
    \chi:\{\text{dissections of }\mathcal{P}\} & \to \{\text{dissecting external atlases of }\mathcal{M}\} \\
     \begin{gathered}
         \text{a dissection with}\\\text{maximal simplices }\{S_j:j\in J\}
     \end{gathered} & \mapsto \text{the external atlas }\{\chi(S_j):j\in J\}.
    \end{align*}
    \item In (1), (2) and (3), if we replace the Lawrence polytope $\mathcal{P}$ with $\mathcal{P^*}$, the points $P_i$ with $P_i^*$, $\chi$ with $\chi^*$, and every word ``external'' with ``internal'', then the statement also holds. 
\end{enumerate}
\end{theorem}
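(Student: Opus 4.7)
The plan is to attack the four parts of Theorem~\ref{3-fold} in order, with the heart of the argument lying in part (3).

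For part (1), every column of the Lawrence matrix satisfies $y_{r+1} + \cdots + y_{r+n} = 1$, so $\mathcal{P}$ lies in an affine hyperplane of dimension $n+r-1$. To see this bound is attained, I fix any basis $B$ of $\mathcal{M}$ and verify that the $n+r$ vectors $\{P_i : i \in B\} \cup \{P_{-j} : j = 1, \dots, n\}$ are linearly independent; after subtracting $P_{-i}$ from $P_i$ for each $i \in B$, the claim reduces to $M|_B$ having rank $r$, which holds because $B$ is a basis. The looplessness assumption gives $P_i \neq P_{-i}$, so the $2n$ points $P_{\pm i}$ are pairwise distinct, and a similar rank argument confirms each is extreme. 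For part (2), a set of $n+r$ of these points is a maximal simplex exactly when it is linearly (hence affinely) independent. Writing $B'$ for the set of edges $i$ with both $P_i$ and $P_{-i}$ chosen and $S$ for the edges chosen exactly once, the count $2|B'| + |S| = n+r$ and the bound $|B'| + |S| \le n$ force $|B'| \ge r$. A block-column reduction, replacing each paired $P_i$ by $P_i - P_{-i}$ (which has zero bottom part and top part equal to the $i$-th column of $M$), then shows that linear independence is equivalent to $B'$ being a basis of $\mathcal{M}$ — forcing $|B'| + |S| = n$ with every edge represented, which is precisely the combinatorial shape of an externally oriented basis.

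The key observation for part (3) is that the arc $\overrightarrow{e_i}$ lies in the fourientation $\overrightarrow{F} := \overrightarrow{B_1} \cap (-\overrightarrow{B_2})$ iff $P_i \in I_1$ and $P_{-i} \in I_2$, where $I_k$ is the vertex set of the simplex $S_k := \chi^{-1}(\overrightarrow{B_k})$. A signed circuit $\overrightarrow{C}$ of $\mathcal{M}$ lifts, by reading off the kernel of the Lawrence matrix, to a signed circuit of the Lawrence point configuration whose positive part is the signed index set of $\overrightarrow{C}$ in $\{\pm 1, \dots, \pm n\}$ and whose negative part is its negation. Consequently a potential circuit of $\overrightarrow{F}$ corresponds to a signed Lawrence circuit with positive part contained in $I_1$ and negative part contained in $I_2$; by the standard oriented-matroid criterion for two maximal simplices in a point configuration to meet in a common face, no such Lawrence circuit exists iff $S_1 \cap S_2$ is a common face. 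This yields the triangulating half of (3).

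For the dissecting half I dualize: disjointness of the relative interiors of $S_1$ and $S_2$ is characterized, via the oriented-matroid form of the separation theorem, by the existence of a covector of the Lawrence oriented matroid that is nonnegative on $I_1$, nonpositive on $I_2$, and strict enough at the non-shared vertices. Computing the row span of the Lawrence matrix, I find that the cocircuits split into the trivial pairs $\{i, -i\}$ (one per edge) and ``one-sided'' cocircuits coming from signed cocircuits of $\mathcal{M}$ and supported entirely in either the positive indices $\{1, \dots, n\}$ or the negative indices $\{-1, \dots, -n\}$ of the doubled ground set. The trivial pairs cannot separate two distinct maximal simplices, so only the one-sided type matters, and unpacking the paper's condition $\overrightarrow{C^*} \subseteq -\overrightarrow{F}^c$ shows it is exactly the existence of a separating cocircuit of that type. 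To promote the per-pair statements to the collection level, I observe first that two maximal simplices sharing the same underlying basis $B$ but differing in their external orientations always have overlapping interiors (a short direct check), so any dissection of $\mathcal{P}$ uses each basis at most once; the standard fact that the normalized volume of $\mathcal{P}$ equals the number of bases of $\mathcal{M}$ then forces each basis to appear exactly once. Part (4) follows by applying (1)--(3) to the dual matroid $\mathcal{M}^*$, after the routine check that internal atlases of $\mathcal{M}$ are the same data as external atlases of $\mathcal{M}^*$ and that Definition~\ref{key def}(3)--(4) is Definition~\ref{key def}(1)--(2) transported across matroid duality. The main obstacle I expect is the dissecting half of (3): matching Lawrence cocircuits in a doubled ground set to signed $\mathcal{M}$-cocircuits lying in $-\overrightarrow{F}^c$ (rather than in $\overrightarrow{F}$) will require careful sign bookkeeping across the four related fourientations $\overrightarrow{F}$, $-\overrightarrow{F}$, $\overrightarrow{F}^c$, $-\overrightarrow{F}^c$.
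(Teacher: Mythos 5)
Your parts (1) and (2) are correct and essentially the paper's own route (Lemmas~\ref{affinesubspace}--\ref{linear2} and Corollary~\ref{dim} reduce to the same block-column computation), and your triangulating half of (3) is a genuinely different, cleaner argument: the paper proves the common-face criterion by a bare-hands coordinate computation with convex combinations and the conformal decomposition (Lemma~\ref{hard lemma} and Table~\ref{table3}), whereas you invoke the standard circuit criterion for two simplices of a point configuration to intersect in a common face and observe that the circuits of the Lawrence configuration are exactly the vectors $(u,-u)$ with $u$ a signed circuit of $\mathcal{M}$. That identification is correct and buys a shorter proof of Proposition~\ref{local}(2).

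The gap is in the dissecting half of (3). Your classification of the cocircuits of the Lawrence oriented matroid is incomplete: the row space of the Lawrence matrix is $\{(a^TM+b^T,\ b^T)\}$, and taking $a^TM$ equal to a signed cocircuit $\overrightarrow{D}$ of $\mathcal{M}$ while letting $b$ cancel $\overrightarrow{D}$ on an arbitrary subset $A\subseteq D$ produces a covector supported on $(D\setminus A)\cup(-A)$; a minimality check (using uniqueness of the cocircuit vector with support $D$ up to scalar) shows every such ``mixed'' set is a cocircuit. Already for the triangle graph one gets a Lawrence cocircuit supported on $\{1,-2\}$, which is neither a pair $\{i,-i\}$ nor one-sided, so the step ``only the one-sided type matters'' is unjustified. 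Moreover, proper separation of $S_1^\circ$ and $S_2^\circ$ is witnessed by a covector (a composition of cocircuits, nonvanishing somewhere on $I_1\cup I_2$), not by a single cocircuit, so the equivalence with ``$\overrightarrow{F}=\overrightarrow{B_1}\cap(-\overrightarrow{B_2})$ has a potential cocircuit'' is exactly the sign bookkeeping you defer; as written it is asserted, not proved. The paper sidesteps all of this by extracting both halves of Proposition~\ref{local} from the single computation in Lemma~\ref{hard lemma} and then converting ``some one-way oriented edge of $\overrightarrow{F}$ lies in no potential circuit'' into ``$\overrightarrow{F}$ has a potential cocircuit'' via the 3-painting axiom (Lemma~\ref{3-painting}); you could graft that last step onto your circuit-based criterion and avoid Lawrence covectors entirely. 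Finally, you cite the normalized volume of $\mathcal{P}$ as standard, whereas the paper must establish it (Lemmas~\ref{volume-simplex}, \ref{exist-triangulation}, \ref{exist}) because that identity is what upgrades the pairwise statements to the covering condition (I) in \emph{both} directions of the bijection in (3); you only argue the direction ``each basis appears at most once.''
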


Recall that the map $\alpha:\sigma\mapsto\mathcal{A}_\sigma$ is a bijection between triangulating circuit signatures and triangulating external atlases of $\mathcal{M}$. See Section~\ref{regular-sign} for the definition of regular triangulations. 

\begin{theorem}\label{main3}
The restriction of the bijection $\chi^{-1}\circ\alpha$ to the set of acyclic circuit signatures of $\mathcal{M}$ is bijective onto the set of regular triangulations of $\mathcal{P}$. In other words, a circuit signature $\sigma$ is acyclic if and only if the triangulation $\chi^{-1}({A}_\sigma)$ is regular. Dually, the restriction of the bijection $(\chi^*)^{-1}\circ\alpha^*$ to the set of acyclic cocircuit signatures of $\mathcal{M}$ is bijective onto the set of regular triangulations of $\mathcal{P}^*$.
\end{theorem}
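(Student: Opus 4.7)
The plan is to combine two standard dualities: regular triangulations of $\mathcal{P}$ are characterized by generic height functions, while acyclicity of a signature is characterized by the existence of a common positive functional (via Farkas' lemma). I build an explicit bridge between the two worlds. Given a height function $h\colon\{P_i,P_{-i}:1\le i\le n\}\to\mathbb{R}$, define $w(h)\in\mathbb{R}^n$ by $w(h)_i:=h(P_i)-h(P_{-i})$. The core claim is that $\chi^{-1}(\mathcal{A}_\sigma)$ equals the regular triangulation induced by $h$ if and only if $\sigma(C)\cdot w(h)$ has a consistent sign for every $\sigma(C)\in\sigma$. Granting this, the theorem is immediate: $\sigma$ is acyclic iff such a $w$ exists, and any such $w$ can be realized as $w(h)$ via heights of the form $h(P_i):=\pm w_i$, $h(P_{-i}):=0$.

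For the forward direction of the core claim, suppose heights $h$ induce the triangulation $T=\chi^{-1}(\mathcal{A}_\sigma)$. Pick any $\sigma(C)\in\sigma$ and a pair $(B,e)$ with $e\notin B$ and $C=C(B,e)$ (such pairs exist because every circuit is fundamental for some basis). The maximal simplex $S_B\in T$ is a lower face of the lifted polytope, realized by a unique affine functional $f_B$ on $\mathbb{R}^r\oplus\mathbb{R}^n$ with $f_B=h$ on $V(S_B)$. Writing $f_B(x,y)=c^Tx+d^Ty$ after absorbing the constant into $d$ (using that the polytope lies in the hyperplane $\sum_j y_j=1$), the equations $f_B(P_{-j})=d_j=0$ and $f_B(P_j)=c^TM_j+d_j=w(h)_j$ for $j\in B$ uniquely determine $c$ by $c^TM_j=w(h)_j$ on $B$. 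The remaining lower-face condition $f_B\le h$ on the single vertex of $\{P_e,P_{-e}\}\setminus V(S_B)$ translates, via the fundamental-circuit relation $M_e=-\epsilon_e^{-1}\sum_{j\in B}\sigma(C)_jM_j$ (with $\epsilon_e=\sigma(C)_e$), into an inequality of the form $\sigma(C)\cdot w(h)\le 0$, which is strict by genericity of $h$. Since every $\sigma(C)\in\sigma$ arises this way, $w(h)$ (or its negation) witnesses the acyclicity of $\sigma$. Running the same algebra backwards establishes the converse direction of the core claim.

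The main technical obstacle is sign bookkeeping: one must verify that the orientation convention in $\mathcal{A}_\sigma$ (external arc $e$ at basis $B$ oriented by the sign of $e$ in $\sigma(C(B,e))$) aligns correctly with the lower-face side of the affine functional $f_B$, and carefully track how the heights $h$ translate to the certificate vector $w$ up to a global sign. The dual statement for $\mathcal{P}^*$ and acyclic cocircuit signatures follows by applying the entire argument to the dual matroid $\mathcal{M}^*$, which is represented by $M^*$ and whose signed circuits are precisely the signed cocircuits of $\mathcal{M}$.
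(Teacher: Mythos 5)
Your proposal is correct and follows essentially the same route as the paper: lift by a height function, characterize the lower-facet condition for the simplex $S_B$ via the affine functional (the paper uses the hyperplane's normal vector $\mathbf{n}_H$ instead, which is the same computation), use the relation $M\cdot\mathbf{C}=0$ for the fundamental circuit to convert that condition into the sign of $\sigma(C)\cdot w$ with $w_i=h_i-h_{-i}$, and finish with the geometric characterization of acyclic signatures (Lemma~\ref{geometric signature}). The only caveats are bookkeeping ones you already flag: the normalization $h(P_{-j})=0$ should be justified by the invariance of regular subdivisions under adding an affine functional, and the global sign ($-\mathbf{w}$ versus $\mathbf{w}$) must be tracked as in the paper's proof.
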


We conclude this subsection with Table~\ref{table-summarize}.
\begin{table}[!ht]
\centering
\begin{tabular}{ |c||c|c|c| }
\hline
\begin{tabular}{c}types of dissections of \\ Lawrence polytope $\mathcal{P}$\end{tabular}
 & dissection & triangulation\ & regular triangulation\\
\hline
types of external atlas $\mathcal{A}$ & dissecting & triangulating &  \\
\hline
types of cycle signature $\sigma$ &  & triangulating & acyclic \\
\hline
\end{tabular}
\caption{A summary of the correspondences among dissections of Lawrence polytopes, atlases, and signatures via $\alpha$ and $\chi$. We omit the dual part. A dissecting atlas does not always induce a signature; see Remark~\ref{rem-nonexample}. We do not have an alternative description of the atlases induced by acyclic signatures.}
\label{table-summarize}
\vspace{-8mm}
\end{table}

\subsection{Motivations, root polytopes, and zonotopes}\label{intro-motivation}
We explain how our work is motivated by and related to the work \cite{K}, \cite{KT1}, and \cite{P} on \emph{root polytopes} of \emph{hypergraphs} and the work \cite{BBY} on the BBY bijections. 

\subsubsection{Lawrence polytopes and root polytopes}
Let $G=(V,E)$ be a connected graph without loops, where $V=\{v_i:i\in I\}$ is the vertex set and $E=\{e_j:j\in J\}$ is the edge set. By adding a vertex to the midpoint of each edge of $G$, we obtain a bipartite graph $\text{Bip}(G)$ with vertex classes $V$ and $E$, where we use $e_j$ to label the midpoint of $e_j$ by abusing notation; see Figure~\ref{bip}. We remark that this is a special case of constructing the bipartite graph $\text{Bip}(H)$ associated with a hypergraph $H$; see \cite{K}.

\begin{figure}[ht]
            \centering
            \includegraphics[scale=0.6]{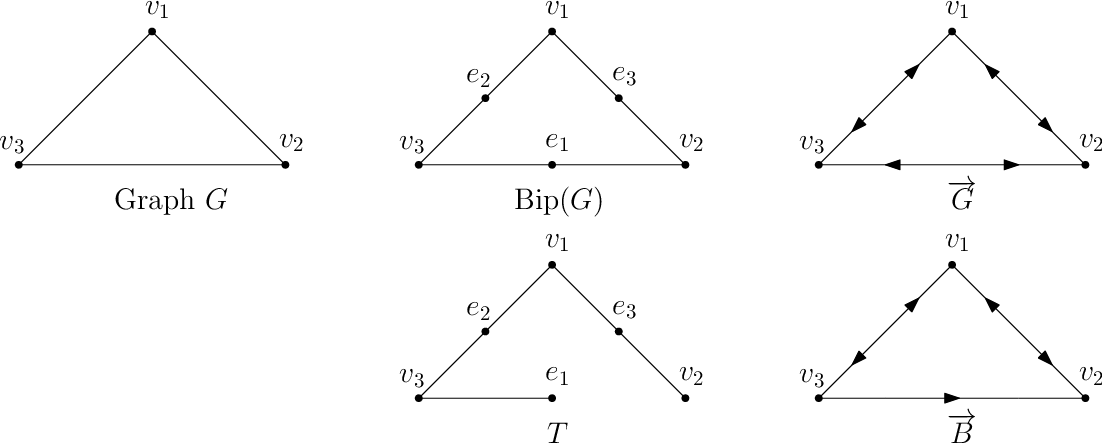}
            \caption{A graph $G$, the bipartite graph $\text{Bip}(G)$ associated to $G$, the fourientation $\protect\overrightarrow{G}$ associated to $G$, a spanning tree $T$ of $\text{Bip}(G)$, and the externally oriented basis $\protect\overrightarrow{B}$ corresponding to $T$.}
            \label{bip}
\end{figure}

Let $\{\mathbf{v}_i:i\in I\}\cup\{\mathbf{e}_j:j\in J\}$ be the coordinate vectors in $\mathbb{R}^{|V|+|E|}$. 
The \emph{root polytope} associated to $\text{Bip}(G)$ is 
\begin{align*}\mathcal{Q}&=\text{ConvexHull}(\mathbf{v}_i-\mathbf{e}_j:v_i \text{ is incident to }e_j \text{ in }G)\\
(&=\text{ConvexHull}(\mathbf{v}_i-\mathbf{e}_j:\{v_i,e_j\}\text{ is an edge of  Bip}(G))).
\end{align*}

The root polytope $\mathcal{Q}$ and the Lawrence polytope $\mathcal{P}$ differ by an invertible linear transformation. Indeed, for every $e_j\in E$ and its two endpoints $v_{i_1},v_{i_2}\in V$, the vectors $\{\mathbf{v}_{i_2}-\mathbf{e}_j, \mathbf{v}_{i_1}-\mathbf{e}_j\}$ can be transformed to $\{\mathbf{e}_j+\mathbf{v}_{i_2}-\mathbf{v}_{i_1}, \mathbf{e}_j\}$ via $\mathbf{e}_j\mapsto\mathbf{v}_{i_1}-\mathbf{e}_j$. Thus the root polytope $\mathcal{Q}$ can be transformed to the Lawrence polytope $\mathcal{P}$ associated to the oriented incidence matrix $M$ of $G$ (ignoring one redundant row of $M$). 

Let $\overrightarrow{G}$ be the fourientation of $G$ where each edge of $G$ is bioriented; see Figure~\ref{bip}. Then we may identify the graph $\text{Bip}(G)$ with the fourientation $\overrightarrow{G}$ as follows. The edges $\{\mathbf{v}_{i_1},\mathbf{e}_j\}$ and $\{\mathbf{v}_{i_2},\mathbf{e}_j\}$ in $\text{Bip}(G)$ are identified with the arcs $(v_{i_1}, v_{i_2})$ and $(v_{i_2}, v_{i_1})$ in $\overrightarrow{G}$ respectively. Via this correspondence, a spanning tree $T\subseteq\text{Bip}(G)$ corresponds to an externally oriented basis $\overrightarrow{B}\subseteq\overrightarrow{G}$. 

Some results in our work can be identified with the ones in \cite{K}, \cite{KT1}, and \cite{P} via the correspondence between $(\text{Bip}(G), \mathcal{Q})$ and $(\overrightarrow{G}, \mathcal{P})$. For example, the following lemma, which characterizes the maximal simplices of $\mathcal{Q}$, is the same as Theorem~\ref{3-fold}(2), which characterizes the maximal simplices of $\mathcal{P}$.
\begin{lemma}\cite[Lemma 12.5]{P} Any maximal simplex of $\mathcal{Q}$ is of the form \[\Delta_T=\text{ConvexHull}(\mathbf{v}_i-\mathbf{e}_j:\{v_i,e_j\}\text{ is an edge of  }T),\] where $T$ is a spanning tree of $\text{Bip}(G)$. 
\end{lemma}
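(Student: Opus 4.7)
The plan is to identify each generating point $\mathbf{v}_i - \mathbf{e}_j$ of $\mathcal{Q}$ with the edge $\{v_i, e_j\}$ of $\text{Bip}(G)$ via signed incidence: orienting every edge of $\text{Bip}(G)$ from the $V$-side to the $E$-side, the vector $\mathbf{v}_i - \mathbf{e}_j$ is precisely the signed incidence vector of $\{v_i, e_j\}$. Under this identification I will show that affine independence of a collection of generating points is equivalent to the corresponding edges forming a forest of $\text{Bip}(G)$. The lemma then follows by a dimension count.

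First I would pin down $\dim \mathcal{Q}$. Every generating point satisfies the two affine constraints $\sum_{i \in I} x_i = 1$ and $\sum_{j \in J} y_j = -1$, so the affine hull lies in a $(|V|+|E|-2)$-dimensional affine subspace, giving $\dim \mathcal{Q} \le |V|+|E|-2$. The matching lower bound will come from the existence of a spanning tree of $\text{Bip}(G)$, which is connected and has $|V|+|E|$ vertices.

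The key step is the claim that a subset $F$ of the generating points is affinely independent if and only if the corresponding edges form a forest in $\text{Bip}(G)$. The standard linear-algebraic fact is that the signed incidence vectors of a set of edges of any graph are linearly independent iff those edges form a forest, and linear independence of nonzero vectors implies affine independence. For the converse, I would use bipartiteness: every cycle of $\text{Bip}(G)$ has even length $2k$, and the fundamental linear dependence associated to the cycle has alternating coefficients $\pm 1$ which sum to zero. Since the kernel of the incidence matrix is spanned by such cycle relations, every linear dependence among signed incidence vectors of $\text{Bip}(G)$ has coefficient sum zero, hence is already an affine dependence. Thus linear and affine independence coincide for these vectors, and both characterize forests.

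With the claim in hand the lemma is immediate. A spanning tree $T$ has $|V|+|E|-1$ edges forming a forest, so its $|V|+|E|-1$ incidence vectors are affinely independent and $\Delta_T$ is a $(|V|+|E|-2)$-dimensional simplex, which is maximal in view of the dimension bound. Conversely, a maximal simplex has $|V|+|E|-1$ affinely independent vertices, whose corresponding edges form a forest with $|V|+|E|-1$ edges in a graph on $|V|+|E|$ vertices, hence a spanning tree. The only nontrivial subtlety is upgrading linear independence to affine independence, which is exactly where bipartiteness of $\text{Bip}(G)$ enters; otherwise the argument is a direct dimension count.
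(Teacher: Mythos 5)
Your proof is correct, but it is worth noting that the paper never proves this lemma directly: it is quoted from Postnikov, and the paper instead establishes the analogous (and more general) statement for the Lawrence polytope $\mathcal{P}$ of a regular matroid in Theorem~\ref{3-fold}(2) (proved as Corollary~\ref{dim}), then observes in Section~\ref{intro-motivation} that $\mathcal{Q}$ and $\mathcal{P}$ differ by an invertible linear transformation, under which spanning trees of $\text{Bip}(G)$ become externally oriented bases. The paper's route to ``affine independence $=$ linear independence'' is Lemma~\ref{affinesimplex}: the polytope lies in an affine hyperplane not through the origin, so any linear dependence $\sum a_iQ_i=0$ automatically has $\sum a_i=0$. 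Your bipartite cycle-parity argument proves exactly the same coefficient-sum-zero property by hand; in fact the constraint $\sum_i x_i=1$ that you already invoke for the dimension bound gives it in one line, so the detour through the cycle space is correct but unnecessary. After that, the two arguments are parallel: you characterize linearly independent sets of incidence vectors as forests, while the paper characterizes linearly independent sets of Lawrence-matrix columns as arc sets containing no bioriented circuit (Lemma~\ref{linear2}), and both conclude by counting. What your version buys is a short, self-contained, elementary proof for the graph case; what the paper's version buys is uniformity with the regular-matroid setting and with the dissection results it needs later. One small point: the converse direction you include (that every $\Delta_T$ is a maximal simplex) implicitly uses that all generating points $\mathbf{v}_i-\mathbf{e}_j$ are vertices of $\mathcal{Q}$ (the analogue of Proposition~\ref{vertex}); this is true and easy here since the points are distinct and linearly independent along any single edge pair, but it deserves a sentence. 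It is not needed for the stated direction of the lemma.
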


However, \cite[Lemma 12.5]{P} works for hypergraphs while our theorem works for regular matroids. Hence neither of them implies the other. A similar example is the analogy between \cite[Lemma 12.6]{P} and Proposition~\ref{local}(2), both of which characterize when two maximal simplices intersect at a common face. 

K\'alm\'an and T\'othm\'er\'esz \cite[Lemma 7.7]{KT1} showed that the Bernardi tour induces a dissection of the root polytope associated to $\text{Bip}(H)$ for any hypergraph $H$. By specializing their result to graphs, we get that the external atlas $\mathcal{A}_\text{B}$ induces a dissection of the Lawrence polytope $\mathcal{P}$, which gave one of the main motivations of our study. We remark that K\'alm\'an and T\'othm\'er\'esz generalized their result to the root polytopes associated to digraphs \cite[Proposition 5.6]{KT2}. 

Since the BBY bijections and the Bernardi bijections are somewhat similar, we hoped to use the root polytope to encode the two signatures used to define the BBY bijections. The mysterious part was that the dissections of the root polytope associated to $\text{Bip}(G)$ are only related to the external arcs with respect to a given tree, while the BBY bijections are related to both the external and internal arcs. This motivated us to look for a dual object to the root polytope, and the dual object turns out to be the Lawrence polytope $\mathcal{P}^*$.

Lastly, we remark that Galashin, Nenashev, and Postnikov \cite{GNP} constructed a new
combinatorial object called a trianguloid and proved that trianguloids are in bijection with triangulations of the root polytope for any hypergraph. In this paper, we characterize the triangulations of the Lawrence polytope $\mathcal{P}$ in terms of circuit signatures; see Table~\ref{table-summarize} and Theorem~\ref{triangular}. However, we can only apply our results to the root polytopes associated to $\text{Bip}(G)$, where $G$ is a graph instead of a hypergraph. Even for this case, it is unclear how our characterization is related to theirs.

\subsubsection{Lawrence polytopes and zonotopes}
An important way to relate the Lawrence polytope to the BBY bijection is by zonotopal subdivisions. The \emph{zonotope} $Z(M)$ (resp. $Z(M^*)$) is the Minkowski sum of the columns of $M$ (resp. $M^*$). Their subdivisions are used to construct the BBY bijection in \cite[Section 3.4]{BBY}. In particular, every acyclic circuit signature (resp. cocircuit signature) induces a subdivision of $Z(M)$ (resp. $Z(M^*)$) indexed by the bases of $\mathcal{M}$. 

We may view the zonotope $Z(M)$ (resp. $Z(M^*)$) as a section of the Lawrence polytope $\mathcal{P}$ (resp. $\mathcal{P}^*$), which is a special case of \cite[Lemma 3.2]{HRS}. To be precise, denote the columns of $M$ by $M_1,\cdots,M_n$, and recall that the columns of the Lawrence matrix \[\begin{pmatrix} M_{r\times n} & {\bf 0} \\  I_{n\times n} &  I_{n\times n} \end{pmatrix}\] are denoted by $P_1, \cdots, P_n$, $P_{-1}, \cdots, P_{-n}$. Then we have \[Z(M)=\{\sum_{i=1}^n k_iM_i:k_i\in [0,1]\text{ for all }i\},\]
and\[\mathcal{P}=\{\sum_{i=1}^n (k_iP_i+k_{-i}P_{-i}):\sum_{i=1}^n (k_i+k_{-i})=1, k_i\geq 0,k_{-i} \geq 0\text{ for all }i\}.\] 
We take the section $y_1=\cdots=y_n=1/n$ of $\mathcal{P}\subseteq\mathbb{R}^{n+r}$, where $y_1,\cdots,y_n$ denote the last $n$ coordinates of $\mathbb{R}^{n+r}$. A direct computation shows that the zonotope $Z(M)$ is exactly the $n$-th dilate of this section. 

If we restrict a triangulation $\chi^{-1}(\mathcal{A}_\sigma)$ of $\mathcal{P}$ to the (dilated) section of $Z(M)$, we obtain a subdivision of $Z(M)$. This is a special case of the \emph{Cayley Trick}; see \cite{D,HRS,P,Z}. When the signature $\sigma$ is acyclic, it is easy to check that the subdivision of $Z(M)$ is exactly the one induced by $\sigma$ in \cite{BBY}.

\subsection{Open questions}\label{open}

We propose some open questions. 

\subsubsection{Torsors}
Fix a connected graph $G$. By a \emph{torsor structure} on the set $\mathcal{T}(G)$ of spanning trees of $G$, we mean a simply transitive action of the Jacobian group $\text{Jac}(G)$ on $\mathcal{T}(G)$. 

Recall that there is a canonical action of $\text{Jac}(G)$ on the set $\mathcal{R}(G)$ of cycle-cocycle reversal classes of $G$ \cite{B, BBY}. Hence if we have any bijection $\beta$ between $\mathcal{T}(G)$ and $\mathcal{R}(G)$, then we have an induced simply transitive action of $\text{Jac}(G)$ on $\mathcal{T}(G)$:
\begin{equation}\label{eq: bijection and torsor}
\text{Jac}(G)\circlearrowright\mathcal{R}(G) \xlongleftrightarrow{\beta}\mathcal{T}(G).    
\end{equation}

Theorem~\ref{main1} says that a pair of dissecting atlases $(\mathcal{A},\mathcal{A^*})$, at least one of which is triangulating, induces the bijection\begin{align*}
    \overline{f}_{\mathcal{A},\mathcal{A^*}}: \mathcal{T}(G) & \to \mathcal{R}(G) \\
    B & \mapsto [\overrightarrow{B} \cap \overrightarrow{B^*}].
    \end{align*}
Thus the bijection $\overline{f}_{\mathcal{A},\mathcal{A^*}}$ induces a torsor structure on $\mathcal{T}(G)$.

\noindent\textbf{Question 1.} How can we determine whether two bijections $\overline{f}_{\mathcal{A},\mathcal{A^*}}$ induce the same torsor structure on $\mathcal{T}(G)$? In the question, when we compare two group actions, the automorphisms of $\text{Jac}(G)$ are not considered. However, one can also compare the group actions up to automorphisms, which gives a different question. 

This paper focuses on the torsor structures defined by (\ref{eq: bijection and torsor}). In an earlier work \cite{Hol08}, Holroyd \textit{et al.} used the \emph{rotor-routing model} to define a torsor structure on $\mathcal{T}(G)$, called the \emph{rotor-routing torsor structure}. Their definition requires the graph $G$ to be a ribbon graph together with a choice of a basepoint vertex. Several papers, including \cite{CCG,BW,D1,ShW,KLT,GM}, study the rotor-routing torsor structure. We wonder whether the rotor-routing torsor structure fits into our framework. 
    
\noindent\textbf{Question 2.} Given a ribbon graph together with a basepoint vertex, does there exist a pair of dissecting atlases $(\mathcal{A},\mathcal{A^*})$, at least one of which is triangulating, such that the induced torsor structure is the same as the rotor-routing torsor structure? 
 
For any planar graph $G$ with a fixed embedding in the plane, we remark that the answer to Question 2 is yes because a pair of acyclic signatures can be chosen such that the BBY bijection induces the rotor-routing torsor structure; see \cite[Example 1.1.3]{BBY}.

\subsubsection{Triangulating signatures}    

We wonder whether Theorem~\ref{triangular} holds for regular matroids. The precise statement is as follows. 

\noindent\textbf{Question 3.} Let $\sigma$ be a circuit signature of a regular matroid $\mathcal{M}$. Are the following equivalent?
\begin{enumerate}
    \item The signature $\sigma$ is triangulating.
    \item For any three signed circuits $\overrightarrow{C}_1,\overrightarrow{C}_2,\overrightarrow{C}_3\in\sigma$, their sum $\overrightarrow{C}_1+\overrightarrow{C}_2+\overrightarrow{C}_3$
    (as vectors in $\mathbb{Z}^E$) is not zero.
\end{enumerate}

\subsubsection{Combining the two Lawrence polytopes}    
In Theorem~\ref{main1}, we require that\begin{itemize}
    \item the atlas $\mathcal{A}$ is triangulating and the other atlas $\mathcal{A^*}$ is dissecting, or
    \item the atlas $\mathcal{A}$ is dissecting and the other atlas $\mathcal{A^*}$ is triangulating.
\end{itemize}

One can imagine that if we put a stronger condition on one of the atlases, then we could require less on the other. We hope to find one polytope that encodes these information. 

\noindent\textbf{Question 4.} Given a regular matroid, is there a polytope whose subdivisions induce bijections between the bases and the circuit-cocircuit reversal classes of the matroid such that the bijections include the ones in Theorem~\ref{main1}? 

The rest of the paper includes the proofs of all the theorems. 

\section{The Proofs of Theorem~\ref{main1} and Theorem~\ref{main2}}\label{combinatorial results}

We will prove Theorem~\ref{main1} and Theorem~\ref{main2} in this section. 

\subsection{Preliminaries}\label{Pre}

In this subsection, we will introduce some lemmas and notations. Some of them will also be used in other sections.

Let $\mathcal{M}$ be a regular matroid. We start with three lemmas which hold for oriented matroids and hence for regular matroids. In the case of graphs, one can find the later two results in \cite[Lemma 2.4 and Proposition 2.5]{BH}. 

The following lemma is known as the \emph{orthogonality axiom} \cite[Theorem 3.4.3]{BVSWZ}.
\begin{lemma}\label{orientation2}
Let $\overrightarrow{C}$ be a signed circuit and $\overrightarrow{C^*}$ be a signed cocircuit of $\mathcal{M}$. If $C\cap C^*\neq\emptyset$, then there exists an edge on which $\overrightarrow{C}$ and $\overrightarrow{C^*}$ agree and another edge on which $\overrightarrow{C}$ and $\overrightarrow{C^*}$ disagree. 
\end{lemma}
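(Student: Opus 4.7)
The plan is to deduce the statement from the fact that the row space and kernel of $M$ are orthogonal complements in $\mathbb{R}^E$. Recall that a signed circuit $\overrightarrow{C}$ is by definition a $\{0,\pm 1\}$-vector in $\ker_{\mathbb{R}}(M)$, and a signed cocircuit $\overrightarrow{C^*}$ is a $\{0,\pm 1\}$-vector in $\im_{\mathbb{R}}(M^T)$. Since $\ker_{\mathbb{R}}(M)$ and $\im_{\mathbb{R}}(M^T)$ are orthogonal complements with respect to the standard inner product on $\mathbb{R}^E$, we have
\[
\langle \overrightarrow{C},\overrightarrow{C^*}\rangle \;=\; \sum_{e\in E}\overrightarrow{C}(e)\cdot\overrightarrow{C^*}(e)\;=\;0.
\]

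Next I would analyze this sum edge by edge. For $e\notin C\cap C^*$ at least one of the two factors vanishes, so the contribution is zero. For $e\in C\cap C^*$ both entries lie in $\{\pm 1\}$, and the product equals $+1$ precisely when the two signs agree on $e$ and $-1$ precisely when they disagree. Hence
\[
0 \;=\; \#\{e\in C\cap C^*:\overrightarrow{C},\overrightarrow{C^*}\text{ agree on }e\} \;-\; \#\{e\in C\cap C^*:\overrightarrow{C},\overrightarrow{C^*}\text{ disagree on }e\}.
\]

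Finally I would invoke the hypothesis $C\cap C^*\neq\emptyset$: if every edge of $C\cap C^*$ contributed $+1$ (all agreements) or every edge contributed $-1$ (all disagreements), the sum above would be nonzero, contradicting orthogonality. So both an agreement edge and a disagreement edge must exist. There is no real obstacle here beyond carefully quoting the orthogonality of kernel and row space; the entire argument is essentially a one-line consequence of $\langle\overrightarrow{C},\overrightarrow{C^*}\rangle=0$ combined with the constraint that all nonzero entries have absolute value $1$.
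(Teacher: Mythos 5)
Your argument is correct. The paper itself does not prove this lemma: it simply cites it as the \emph{orthogonality axiom} for oriented matroids, \cite[Theorem 3.4.3]{BVSWZ}, which is a purely combinatorial statement valid for arbitrary oriented matroids. Your proof instead exploits the real representation available in the regular case: since the paper defines $\overrightarrow{C}$ as a $\{0,\pm1\}$-vector in $\ker_{\mathbb{R}}(M)$ and $\overrightarrow{C^*}$ as a $\{0,\pm1\}$-vector in $\im_{\mathbb{R}}(M^T)$, and these two subspaces are orthogonal complements, the identity $\langle\overrightarrow{C},\overrightarrow{C^*}\rangle=0$ together with the $\{0,\pm1\}$ constraint immediately forces both an agreeing edge and a disagreeing edge whenever $C\cap C^*\neq\emptyset$. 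This is a genuinely different (and more elementary) route: what it buys is a short, self-contained verification that needs nothing beyond linear algebra; what it gives up is generality, since the argument depends on the vectors being realizable in orthogonal subspaces of $\mathbb{R}^E$ and so does not establish the axiom for oriented matroids that are not representable in this way. For the setting of this paper, where $\mathcal{M}$ is always regular and the signed circuits and cocircuits are defined exactly as such vectors, your proof is complete and correct.
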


\begin{lemma}\label{exclusive} Let $\overrightarrow{F}$ be a fourientation of $\mathcal{M}$. Then for any potential circuit $\overrightarrow{C}$ and any potential cocircuit $\overrightarrow{C^*}$ of $\overrightarrow{F}$, their underlying edges satisfy $C\cap C^*=\emptyset$. 
\end{lemma}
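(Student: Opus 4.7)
The plan is to argue by contradiction: suppose there is an edge $e \in C \cap C^*$ and derive that the arcs of $\overrightarrow{C}$ and $\overrightarrow{C^*}$ at $e$ must be equal, then invoke the orthogonality axiom (Lemma~\ref{orientation2}) to get a contradiction.

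First I would unpack the definitions at a single edge $e \in C \cap C^*$. Let $\overrightarrow{a}$ denote the arc of $\overrightarrow{C}$ supported on $e$ and $\overrightarrow{b}$ the arc of $\overrightarrow{C^*}$ supported on $e$; both are either $\overrightarrow{e}$ or $-\overrightarrow{e}$. Since $\overrightarrow{C}$ is a potential circuit, $\overrightarrow{C} \subseteq \overrightarrow{F}$, so $\overrightarrow{a} \in \overrightarrow{F}$. Since $\overrightarrow{C^*}$ is a potential cocircuit, $\overrightarrow{C^*} \subseteq -\overrightarrow{F}^c$, which unpacked means $-\overrightarrow{b} \notin \overrightarrow{F}$. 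Combining these two membership facts at $e$ rules out the case $\overrightarrow{a} = -\overrightarrow{b}$, and since the only two options for a pair of arcs on one edge are ``equal'' or ``opposite,'' we must have $\overrightarrow{a} = \overrightarrow{b}$. In other words, on \emph{every} edge of $C \cap C^*$, the signed circuit and the signed cocircuit agree.

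The second step is to apply Lemma~\ref{orientation2}: if $C \cap C^* \neq \emptyset$, then there exists an edge of $C \cap C^*$ on which $\overrightarrow{C}$ and $\overrightarrow{C^*}$ \emph{disagree}. This directly contradicts the conclusion of the previous paragraph, so we must have $C \cap C^* = \emptyset$.

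I do not anticipate any real obstacle; the only subtle point is keeping the sign conventions for the notation $-\overrightarrow{F}^c$ straight and correctly reading ``$\overrightarrow{C^*} \subseteq -\overrightarrow{F}^c$'' as saying that the \emph{reverse} of each arc of $\overrightarrow{C^*}$ fails to lie in $\overrightarrow{F}$. Once that is set up, the contradiction with orthogonality is immediate.
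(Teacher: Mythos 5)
Your proposal is correct and follows essentially the same route as the paper's proof: both arguments show that the containments $\overrightarrow{C}\subseteq\overrightarrow{F}$ and $\overrightarrow{C^*}\subseteq-\overrightarrow{F}^c$ force $\overrightarrow{C}$ and $\overrightarrow{C^*}$ to agree on every edge of $C\cap C^*$, and then invoke the orthogonality axiom (Lemma~\ref{orientation2}) to conclude $C\cap C^*=\emptyset$. Your version just spells out the sign bookkeeping for $-\overrightarrow{F}^c$ more explicitly than the paper does.
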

\begin{proof}
Assume $E_0=C\cap C^*$ is nonempty. Then the edges in $E_0$ must be one-way oriented by $\overrightarrow{C}$ and by $\overrightarrow{C^*}$. This contradicts Lemma~\ref{orientation2}. 
\end{proof}
The following lemma is known as the \emph{3-painting axiom}; see \cite[Theorem 3.4.4]{BVSWZ}. 
\begin{lemma}\label{3-painting} Let $\overrightarrow{F}$ be a fourientation of $\mathcal{M}$ and $\overrightarrow{e}$ be a one-way oriented edge in $\overrightarrow{F}$.
Then $\overrightarrow{e}$ belongs to some potential circuit of $\overrightarrow{F}$ or $\overrightarrow{e}$ belongs to some potential cocircuit of $\overrightarrow{F}$ but not both.
\end{lemma}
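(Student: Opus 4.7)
The plan is to establish the two halves of this exclusive dichotomy separately, reading the statement (which appears to have a typo) as: $\overrightarrow{e}$ belongs to some potential circuit of $\overrightarrow{F}$ or to some potential cocircuit of $\overrightarrow{F}$, but not both.

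For the ``not both'' half, I would argue directly. Suppose $\overrightarrow{e}$ lies simultaneously in a potential circuit $\overrightarrow{C}$ and a potential cocircuit $\overrightarrow{C^*}$ of $\overrightarrow{F}$. Then $e\in C\cap C^*$, which contradicts Lemma~\ref{exclusive}. Hence at most one of the two alternatives can hold.

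For the ``at least one'' half, I would recognize this as a special case of the 3-painting axiom (Minty's lemma) for oriented matroids. Since regular matroids are oriented matroids, as noted in Section~\ref{regular}, the cleanest route is to invoke \cite[Theorem 3.4.4]{BVSWZ} and translate the three-colouring there into our fourientation vocabulary: one-way oriented arcs in $\overrightarrow{F}$ play the role of ``fixed-sign'' elements, bioriented edges are ``free for circuits'', and unoriented edges are ``free for cocircuits''; the distinguished arc is $\overrightarrow{e}$. Alternatively, one can give a self-contained proof via the totally unimodular representing matrix $M$. The existence of a potential circuit containing $\overrightarrow{e}$ corresponds to a $\{0,\pm 1\}$-vector $v\in\ker_{\mathbb{R}}(M)$ with $v_e=1$, with $v_f=0$ whenever $f$ is unoriented in $\overrightarrow{F}$, and with $v_f$ matching the prescribed direction whenever $f$ is one-way oriented. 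A potential cocircuit through $\overrightarrow{e}$ corresponds dually to such a vector in $\mathrm{im}_{\mathbb{R}}(M^T)$, with the roles of unoriented and bioriented edges swapped. A Farkas-type alternative, carried out in the integral $\{0,\pm 1\}$-form made available by total unimodularity, then produces exactly one of the two witnesses. A constructive implementation would start from the signed fundamental circuit or cocircuit of $e$ relative to a suitable basis and iteratively apply signed circuit (resp.\ cocircuit) elimination to remove disagreements with $\overrightarrow{F}$ one edge at a time.

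The main obstacle in the self-contained route is executing the signed pivoting while maintaining $\{0,\pm 1\}$-integrality of the intermediate vectors; total unimodularity guarantees this, but the bookkeeping is delicate and essentially reproduces the standard argument. Since this axiom is entirely classical in oriented matroid theory, I would simply cite \cite[Theorem 3.4.4]{BVSWZ} and let the ``not both'' half, a one-line consequence of Lemma~\ref{exclusive}, complete the proof.
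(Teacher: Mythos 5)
Your reading of the (typo-afflicted) statement is the intended one, and your proof matches the paper's treatment: the paper gives no argument at all, simply presenting the lemma as the classical 3-painting axiom and citing \cite[Theorem 3.4.4]{BVSWZ}, exactly as you propose. Your colour-translation (one-way oriented arcs as the fixed-sign elements, bioriented edges free for circuits, unoriented edges free for cocircuits) is correct, and your one-line derivation of the ``not both'' half from Lemma~\ref{exclusive} is a harmless redundancy, since the cited axiom already asserts exclusivity.
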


We also need the following lemma and definition. Recall that $M$ is a totally unimodular matrix representing the regular matroid $\mathcal{M}$.

\begin{lemma}\cite[Lemma 6.7]{Z}\label{conformal}
(1) Let $\overrightarrow{u}\in\ker_\mathbb{R}(M)$. Then $\overrightarrow{u}$ can be written as a sum of signed circuits with positive coefficients $\sum k_i\overrightarrow{C_i}$ where for each edge $e$ of each $C_i$, the sign of $e$ in $\overrightarrow{C_i}$ agrees with the sign of $e$ in $\overrightarrow{u}$. 

(2) Let $\overrightarrow{u^*}\in\im_\mathbb{R}(M^T)$. Then $\overrightarrow{u^*}$ can be written as a sum of signed cocircuits with positive coefficients $\sum k_i\overrightarrow{C_i^*}$ where for each edge $e$ of each $C_i^*$, the sign of $e$ in $\overrightarrow{C_i^*}$ agrees with the sign of $e$ in $\overrightarrow{u^*}$. 
\end{lemma}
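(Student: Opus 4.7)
The plan is to prove (1) by induction on $|\text{supp}(\overrightarrow{u})|$ and deduce (2) from (1) by duality. The engine of the induction is the following \emph{conformal circuit lemma}: for any nonzero $\overrightarrow{u}\in\ker_\mathbb{R}(M)$ there exists a signed circuit $\overrightarrow{C}$ with $C\subseteq\text{supp}(\overrightarrow{u})$ whose sign on every $e\in C$ agrees with the sign of $\overrightarrow{u}(e)$. Granted this, set $k=\min_{e\in C}|\overrightarrow{u}(e)|>0$; then $\overrightarrow{u}-k\overrightarrow{C}$ still lies in $\ker_\mathbb{R}(M)$, remains sign-consistent with $\overrightarrow{u}$ on its support, and has strictly smaller support than $\overrightarrow{u}$ (any edge attaining the minimum is zeroed out), so the inductive hypothesis finishes the proof.

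For the conformal circuit lemma I would first reduce to the case $\overrightarrow{u}\geq 0$ by negating those columns of $M$ indexed by edges $e$ with $\overrightarrow{u}(e)<0$; this operation preserves total unimodularity and merely flips the signs of signed circuits at those columns (cf.\ Section~\ref{regular}), so the task becomes: find a signed circuit $\overrightarrow{C}$ with $C\subseteq S:=\text{supp}(\overrightarrow{u})$ and $\overrightarrow{C}\geq 0$. To this end I would consider the polyhedral cone
\[
\mathcal{K}=\{\overrightarrow{v}\in\ker_\mathbb{R}(M):\overrightarrow{v}\geq 0,\ \text{supp}(\overrightarrow{v})\subseteq S\},
\]
which is nonzero since $\overrightarrow{u}\in\mathcal{K}$, and pick a nonzero $\overrightarrow{v}\in\mathcal{K}$ whose support $F$ is of minimum size. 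The key subclaim is that $F$ is a circuit of $\mathcal{M}$: otherwise $F$ properly contains a circuit $C'$, giving a nonzero $\overrightarrow{w}\in\ker_\mathbb{R}(M)$ supported on $C'\subsetneq F$, and for a suitable $t\in\mathbb{R}$ the vector $\overrightarrow{v}+t\overrightarrow{w}$ still lies in $\mathcal{K}$ but has strictly smaller support, contradicting minimality. Once $F$ is known to be a circuit, the only $\{0,\pm 1\}$-vectors in $\ker_\mathbb{R}(M)$ with support $F$ are the two signed circuits $\pm\overrightarrow{C}$, and $\overrightarrow{v}\geq 0$ forces $\overrightarrow{v}$ to be a positive multiple of the nonnegative one, which is the sought conformal signed circuit.

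Part (2) is then (1) applied to the dual totally unimodular matrix $M^*_{(n-r)\times n}$ from Section~\ref{regular}: one has $\ker_\mathbb{R}(M^*)=\im_\mathbb{R}(M^T)$ (both being the $r$-dimensional orthogonal complement of $\ker_\mathbb{R}(M)$), and the signed circuits of the matroid represented by $M^*$ are exactly the signed cocircuits of $\mathcal{M}$, so the conformal decomposition supplied by (1) for $M^*$ is precisely the statement (2).

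The main obstacle is the minimal-support step identifying extremal nonnegative kernel vectors with nonnegative signed circuits; roughly, this says that the extreme rays of the cone $\mathcal{K}$ are carried by circuits. Once this is in hand, the induction, the reorientation reduction, and the duality transfer are all routine bookkeeping.
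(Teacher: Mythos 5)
Your proof is correct. Note, however, that the paper does not prove this lemma at all: it is quoted directly from the literature (Ziegler, \emph{Lectures on Polytopes}, Lemma 6.7), so there is no internal argument to compare against. What you have written is a self-contained proof of the cited result, and it is the standard one: reduce to a nonnegative kernel vector by reorienting columns (which preserves total unimodularity and the circuit structure), extract a conformal circuit by taking a nonzero nonnegative kernel vector of minimal support and showing that support must be a circuit (the perturbation $\overrightarrow{v}+t\overrightarrow{w}$ argument is sound --- the result of the perturbation cannot vanish, since that would force $\mathrm{supp}(\overrightarrow{v})=C'\subsetneq F$), peel off $k\overrightarrow{C}$ with $k$ the minimum entry over $C$, and induct on the support size; part (2) then follows by applying part (1) to the dual representation $M^*$, using $\ker_\mathbb{R}(M^*)=\im_\mathbb{R}(M^T)$ and the fact that signed circuits of $\mathcal{M}^*$ are signed cocircuits of $\mathcal{M}$, exactly as set up in Section~\ref{regular}. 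The one step you flag as the ``main obstacle'' (minimal-support nonnegative kernel vectors are carried by circuits) is indeed the crux, and your treatment of it is complete: $F$ is dependent because it supports a nonzero kernel vector, and the one-dimensionality of kernel vectors supported on a circuit pins $\overrightarrow{v}$ down to a positive multiple of a signed circuit.
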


\begin{definition}\label{component}
In Lemma~\ref{conformal}, we call the signed circuit $\overrightarrow{C_i}$ a \emph{component} of $\overrightarrow{u}$ and the signed cocircuit $\overrightarrow{C_i^*}$ a \emph{component} of $\overrightarrow{u^*}$. 
\end{definition}

\begin{Rem}\label{component2}
In Lemma~\ref{conformal}, the linear combination might not be unique. However, if we fix a linear combination, it is clear that the underlying edges of $\overrightarrow{u}$ (i.e., $\{e:e\text{-th entry of }\overrightarrow{u}\text{ is not zero}\}$) is the union of the underlying edges of its components in the linear combination. Also see \cite[Lemma 4.1.1]{BBY} for an integral version of Lemma~\ref{conformal}.   
\end{Rem}

The following lemma is crucial when we deal with circuit-cocircuit reversal classes. One can find a proof in the proof of \cite[Theorem 3.3]{GY} or see \cite[Section 2.1]{D2}.

\begin{lemma}\label{orientation1}
Let $\overrightarrow{O_1}$ and $\overrightarrow{O_2}$ be two orientations in the same circuit-cocircuit reversal class of $\mathcal{M}$. Then $\overrightarrow{O_2}$ can be obtained by reversing disjoint signed circuits and signed cocircuits in $\overrightarrow{O_1}$. 
\end{lemma}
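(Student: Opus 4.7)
The plan is to reduce the problem to a conformal decomposition of the difference vector $\overrightarrow{v}:=\tfrac{1}{2}(\overrightarrow{O_2}-\overrightarrow{O_1})\in\mathbb{R}^E$, which is a $\{0,\pm 1\}$-vector whose support is precisely the disagreement set $D=\{e\in E:\overrightarrow{O_1}\text{ and }\overrightarrow{O_2}\text{ differ on }e\}$ and which agrees with $\overrightarrow{O_2}$ (equivalently, with $-\overrightarrow{O_1}$) on $D$. Summing the signed circuits and signed cocircuits reversed along any connecting sequence from $\overrightarrow{O_1}$ to $\overrightarrow{O_2}$ expresses $\overrightarrow{v}=\overrightarrow{u}+\overrightarrow{u^*}$, with $\overrightarrow{u}\in\ker_{\mathbb{Z}} M$ an integer combination of signed circuits and $\overrightarrow{u^*}\in\im_{\mathbb{Z}} M^T$ an integer combination of signed cocircuits; the decomposition is unique because $\ker_{\mathbb{R}} M$ and $\im_{\mathbb{R}} M^T$ are orthogonal complements in $\mathbb{R}^E$.

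The main claim I would establish is that both $\overrightarrow{u}$ and $\overrightarrow{u^*}$ are $\{0,\pm 1\}$-valued, have pairwise disjoint supports contained in $D$, and each conforms to $\overrightarrow{v}$ on its support. I plan to prove this by induction on $|D|$: if $|D|>0$, pick an edge $e\in D$ and construct either a signed circuit or a signed cocircuit $\overrightarrow{X}$ with $X\subseteq D$, $e\in X$, conforming to $\overrightarrow{v}$. Reversing $-\overrightarrow{X}\subseteq\overrightarrow{O_1}$ produces an orientation $\overrightarrow{O_1}'$ still in the same reversal class as $\overrightarrow{O_2}$ but with strictly smaller disagreement set $D\setminus X$, and the induction closes. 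To build $\overrightarrow{X}$ I would apply the 3-painting axiom (Lemma~\ref{3-painting}) to the fourientation $\overrightarrow{F}$ equal to $\overrightarrow{O_2}$ on $D$ and bioriented on $D^c$ (whose potential cocircuits are forced to lie in $D$) and to the dual fourientation $\overrightarrow{F}'$ equal to $\overrightarrow{O_2}$ on $D$ and unoriented on $D^c$ (whose potential circuits are forced to lie in $D$); the integer decomposition above together with the orthogonality Lemma~\ref{orientation2} rules out the case in which every circuit and every cocircuit through $e$ escapes $D$.

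Given the main claim, I would finish by invoking the integer version of Lemma~\ref{conformal} (see Remark~\ref{component2}) to write $\overrightarrow{u}=\sum_i k_i\overrightarrow{C_i}$ and $\overrightarrow{u^*}=\sum_j k_j^*\overrightarrow{C_j^*}$ with positive integer coefficients and conforming components. Since $\overrightarrow{u},\overrightarrow{u^*}\in\{0,\pm 1\}^E$, all coefficients are forced to equal $1$, and the $\overrightarrow{C_i}$ (respectively the $\overrightarrow{C_j^*}$) have pairwise disjoint supports; the disjointness of $\mathrm{supp}(\overrightarrow{u})$ and $\mathrm{supp}(\overrightarrow{u^*})$ further yields $C_i\cap C_j^*=\emptyset$ for all $i,j$. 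Each $-\overrightarrow{C_i}$ and each $-\overrightarrow{C_j^*}$ is contained in $\overrightarrow{O_1}$ by the conforming property, and simultaneously reversing all of them transforms $\overrightarrow{O_1}$ into $\overrightarrow{O_2}$, which is exactly what the lemma asserts.

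The principal obstacle is the inductive existence step: showing that at each stage one can extract a signed circuit or cocircuit strictly inside $D$. The 3-painting axiom guarantees an object of only one type (circuit or cocircuit) for each choice of fourientation, and for a generic $\{0,\pm 1\}$-vector supported on $D$ it can happen that every circuit and every cocircuit through a prescribed edge extends outside $D$. The hypothesis that $\overrightarrow{O_1}$ and $\overrightarrow{O_2}$ lie in the same reversal class must enter precisely here, through the integer decomposition $\overrightarrow{v}=\overrightarrow{u}+\overrightarrow{u^*}$, and turning this constraint into a rigorous elimination of the bad case is the most delicate part of the argument.
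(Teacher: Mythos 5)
The paper does not actually prove this lemma: it defers to the proof of Theorem~3.3 in \cite{GY} and to Section~2.1 of \cite{D2}, so there is no internal argument to compare against. Your overall architecture matches those sources in outline and is the right one: write $\overrightarrow{v}=\tfrac12(\overrightarrow{O_2}-\overrightarrow{O_1})$ as $\overrightarrow{u}+\overrightarrow{u^*}$ with $\overrightarrow{u}$ an integer combination of signed circuits and $\overrightarrow{u^*}$ an integer combination of signed cocircuits, establish the ``main claim'' that both parts are $\{0,\pm 1\}$-valued, conform to $\overrightarrow{v}$, and have disjoint supports partitioning $D$, and then conformally decompose each part. Your closing paragraph (all coefficients forced to equal $1$, components pairwise disjoint, each $-\overrightarrow{C_i}$ and $-\overrightarrow{C_j^*}$ contained in $\overrightarrow{O_1}$) is correct once the main claim is available.

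The genuine gap is exactly where you flag it: the inductive extraction of a signed circuit or cocircuit through a given $e\in D$ that stays inside $D$ and conforms to $\overrightarrow{v}$. The 3-painting axiom applied to your two fourientations only produces, for each of them, an object of one of the two types, and the bad case in which the circuit supplied by $\overrightarrow{F}$ and the cocircuit supplied by $\overrightarrow{F}'$ both escape $D$ genuinely occurs for general $\{0,\pm 1\}$-vectors (for instance $\overrightarrow{v}=(1,0)$ on a graph with two parallel edges); you assert that the integer decomposition and Lemma~\ref{orientation2} rule it out but give no mechanism, so the central step is missing. In fact the induction is unnecessary: the main claim follows directly from orthogonality plus integrality. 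Since $\overrightarrow{u}\perp\overrightarrow{u^*}$, one has $\|\overrightarrow{u}\|_2^2=\overrightarrow{u}\cdot\overrightarrow{v}\le\|\overrightarrow{u}\|_1\le\|\overrightarrow{u}\|_2^2$, the first inequality because $\overrightarrow{v}$ is $\{0,\pm 1\}$-valued and the second because $\overrightarrow{u}$ has integer entries; equality throughout forces $\overrightarrow{u}\in\{0,\pm 1\}^E$ with $u_ev_e=|u_e|$ for every $e$, i.e.\ $\overrightarrow{u}$ conforms to $\overrightarrow{v}$ and is supported in $D$, and likewise for $\overrightarrow{u^*}$; then $v_e=u_e+u^*_e=\pm 1$ on $D$ forces the two supports to be disjoint and to partition $D$. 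Substituting this computation for your induction, the remainder of your argument goes through and proves the lemma.
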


Lastly, we introduce some useful notations here. Recall that $E$ is the ground set of $\mathcal{M}$. Let $E_0$ be a subset of $E$ and $\overrightarrow{F}$ be a fourientation. We denote by $\overrightarrow{F}|_{E_0}$ the fourientation obtained by restricting $\overrightarrow{F}$ to the ground set $E_0$, i.e., $\overrightarrow{F}|_{E_0}=\overrightarrow{F}\cap\{\pm\overrightarrow{e}: e\in E_0\}$. When $E_0$ consists of a single edge $e$, we simply write $\overrightarrow{F}|_e$. In particular, when $e$ is unoriented in $\overrightarrow{F}$, $\overrightarrow{F}|_e=\emptyset$. When $e$ is bioriented in $\overrightarrow{F}$, we write $\overrightarrow{F}|_e=\updownarrow$.

\subsection{Proof of Theorem~\ref{main1}}
We first recall some basic settings. We fix a regular matroid $\mathcal{M}$ with ground set $E$. Let $\mathcal{A}$ be an external atlas and $\mathcal{A^*}$ be an internal atlas, which means for every basis $B$ of $\mathcal{M}$, there exists a unique externally oriented basis $\overrightarrow{B}\in\mathcal{A}$ and a unique internally oriented basis $\overrightarrow{B^*}\in\mathcal{A^*}$. The pair $(\mathcal{A},\mathcal{A^*})$ of atlases induces the following map 
    \begin{align*}
    f_{\mathcal{A},\mathcal{A^*}}:\{\text{bases}\} & \to \{\text{orientations}\} \\
    B & \mapsto \overrightarrow{B} \cap \overrightarrow{B^*}.
    \end{align*}
Let $B_1$ and $B_2$ be two arbitrary bases (not necessarily distinct). Let $\overrightarrow{O_1}$, $\overrightarrow{O_2}$ and $\overrightarrow{F}$, $\overrightarrow{F^*}$ be two orientations and two fourientations given by the following formulas: \[\overrightarrow{O_i}=f_{\mathcal{A},\mathcal{A^*}}(B_i), i\in\{1,2\},\] \[\overrightarrow{F}=\overrightarrow{B_1}\cap(-\overrightarrow{B_2}),\] \[\overrightarrow{F^*}=(\overrightarrow{B_1^*}\cap(-\overrightarrow{B_2^*}))^c.\]

Now we compute the two fourientations $\overrightarrow{F}$ and $\overrightarrow{F^*}$ in terms of $\overrightarrow{O_1}$ and $\overrightarrow{O_2}$, which is summarized in Table~\ref{table1}. For example, when $e\in B_2\backslash B_1$, we have $\overrightarrow{F}|_e=\overrightarrow{O_1}|_e$. This is because $\overrightarrow{F}=\overrightarrow{B_1}\cap(-\overrightarrow{B_2})$, $\overrightarrow{B_2}|_e=\updownarrow$, and $\overrightarrow{B_1}|_e=\overrightarrow{O_1}|_e$ (due to $\overrightarrow{O_1}=\overrightarrow{B_1}\cap\overrightarrow{B_1^*}$). All the other results can be derived similarly. A direct consequence of this table is the following lemma.

\begin{table}[ht]
\centering
\bgroup
\def\arraystretch{1.5}
\begin{tabular}{ |c|c|c|c|c| }

\hline
position of edge $e$ & $B_1\cap B_2$ & $B_1\backslash B_2$ & $B_2\backslash B_1$ & $B_1^c\cap B_2^c$\\
\hline
$\overrightarrow{F}|_{e}$ & $\updownarrow$  & $-\overrightarrow{O_2}$  & $\overrightarrow{O_1}$  & $ \overrightarrow{O_1}\cap(-\overrightarrow{O_2})$ \\
\hline
$\overrightarrow{F^*}|_{e}$ &$(-\overrightarrow{O_1})\cup\overrightarrow{O_2}$ & $-\overrightarrow{O_1}$ &$\overrightarrow{O_2}$ & $\emptyset$\\

\hline
\end{tabular}
\egroup
\caption{The table computes $\protect\overrightarrow{F}$ and $\protect\overrightarrow{F^*}$ in terms of  $\protect\overrightarrow{O_1}$ and $\protect\overrightarrow{O_2}$. The edges $e$ of $\mathcal{M}$ are partitioned into $4$ classes according to whether $e\in B_1$ and whether $e\in B_2$. We view $\protect\overrightarrow{O_1}$ and $\protect\overrightarrow{O_1}$ as sets of arcs of $\mathcal{M}$ so that the union and intersection make sense. We omit ``$|_e$'' after $\protect\overrightarrow{O_i}$'s. E.g., when $e\in B_1^c\cap B_2^c$,  $\protect\overrightarrow{F}|_e=\protect\overrightarrow{O_1}|_e\cap(-\protect\overrightarrow{O_2}|_e)$.}
\label{table1}
\vspace{-8mm}
\end{table}

\begin{lemma}\label{theorem1lemma} Let $E_\rightrightarrows$ be the set of edges where $\overrightarrow{O_1}$ and $\overrightarrow{O_2}$ agree. Let $E_\rightleftarrows=E\backslash E_\rightrightarrows$.

\begin{enumerate}
    \item If $E_0\subseteq E_\rightrightarrows$, then $\overrightarrow{F}|_{E_0}=\overrightarrow{F^*}|_{E_0}$. 
    \item If $E_0\subseteq E_\rightleftarrows$, then $\overrightarrow{O_1}|_{E_0}\subseteq\overrightarrow{F}|_{E_0}$ and $\overrightarrow{F^*}|_{E_0}\subseteq\overrightarrow{O_2}|_{E_0}$. 
\end{enumerate}
\end{lemma}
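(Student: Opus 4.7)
The plan is simply to unpack Table~\ref{table1} one edge at a time; both statements are local in the sense that it suffices to verify them for a single edge $e \in E_0$, after which restricting to $E_0$ is automatic. For each such $e$ there are four cases according to whether $e$ lies in $B_1 \cap B_2$, $B_1 \setminus B_2$, $B_2 \setminus B_1$, or $B_1^c \cap B_2^c$, so the whole proof is eight short case checks (four for (1), four for (2)).

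For part (1), I would fix $e \in E_\rightrightarrows$, so $\overrightarrow{O_1}|_e = \overrightarrow{O_2}|_e$ is a single arc. The three cases $e \in B_1 \setminus B_2$, $e \in B_2 \setminus B_1$, and $e \in B_1^c \cap B_2^c$ are essentially immediate from the table: in the first two, $\overrightarrow{F}|_e$ and $\overrightarrow{F^*}|_e$ both reduce to $\pm\overrightarrow{O_1}|_e = \pm\overrightarrow{O_2}|_e$ (same sign), and in the last $\overrightarrow{O_1}|_e \cap (-\overrightarrow{O_2}|_e) = \emptyset = \overrightarrow{F^*}|_e$. The only case requiring a small remark is $e \in B_1 \cap B_2$, where $\overrightarrow{F^*}|_e = (-\overrightarrow{O_1}|_e) \cup \overrightarrow{O_2}|_e = \{-\overrightarrow{O_1}|_e, \overrightarrow{O_1}|_e\} = \updownarrow = \overrightarrow{F}|_e$.

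For part (2), I would fix $e \in E_\rightleftarrows$, so $\overrightarrow{O_1}|_e = -\overrightarrow{O_2}|_e$. For $e \in B_1 \cap B_2$, $\overrightarrow{F}|_e = \updownarrow$ trivially contains $\overrightarrow{O_1}|_e$, and $\overrightarrow{F^*}|_e = (-\overrightarrow{O_1}|_e) \cup \overrightarrow{O_2}|_e = \overrightarrow{O_2}|_e$. For $e \in B_1 \setminus B_2$, $\overrightarrow{F}|_e = -\overrightarrow{O_2}|_e = \overrightarrow{O_1}|_e$ and $\overrightarrow{F^*}|_e = -\overrightarrow{O_1}|_e = \overrightarrow{O_2}|_e$. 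For $e \in B_2 \setminus B_1$, the entries in the table already read $\overrightarrow{O_1}|_e$ and $\overrightarrow{O_2}|_e$ respectively. For $e \in B_1^c \cap B_2^c$, $\overrightarrow{F}|_e = \overrightarrow{O_1}|_e \cap (-\overrightarrow{O_2}|_e) = \overrightarrow{O_1}|_e$ and $\overrightarrow{F^*}|_e = \emptyset \subseteq \overrightarrow{O_2}|_e$.

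I do not foresee any real obstacle: the lemma is essentially a repackaging of Table~\ref{table1} under the extra information that $\overrightarrow{O_1}$ and $\overrightarrow{O_2}$ either agree or disagree at $e$. The only thing to be a bit careful about is the case $e \in B_1 \cap B_2$ in part (1), where one has to notice that $\{-\overrightarrow{O_1}|_e\} \cup \{\overrightarrow{O_1}|_e\}$ is exactly the bioriented symbol $\updownarrow$; everywhere else the verification is literally reading the correct entry of the table and substituting $\overrightarrow{O_1}|_e = \pm \overrightarrow{O_2}|_e$.
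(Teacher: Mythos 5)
Your proposal is correct and is essentially the paper's own argument: the paper states the lemma as ``a direct consequence'' of Table~\ref{table1}, and your eight case checks (substituting $\overrightarrow{O_1}|_e=\overrightarrow{O_2}|_e$ or $\overrightarrow{O_1}|_e=-\overrightarrow{O_2}|_e$ into each table entry) are exactly the verification the paper leaves implicit. All the individual computations, including the $B_1\cap B_2$ case of part (1) where $(-\overrightarrow{O_1}|_e)\cup\overrightarrow{O_1}|_e=\,\updownarrow$, are accurate.
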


Theorem~\ref{main1} will be a consequence of the following result. 
\begin{proposition}\label{th1prop}
Let $B_1$ and $B_2$ be two distinct bases of $\mathcal{M}$. If either of the following two assumptions holds, then the orientations $\overrightarrow{O_1}=f_{\mathcal{A},\mathcal{A^*}}(B_1)$ and $\overrightarrow{O_2}=f_{\mathcal{A},\mathcal{A^*}}(B_2)$ are in distinct circuit-cocircuit reversal classes. 

\begin{enumerate}
    \item The external atlas $\mathcal{A}$ is dissecting and the internal atlas $\mathcal{A^*}$ is triangulating. 
    \item The external atlas $\mathcal{A}$ is triangulating and the internal atlas $\mathcal{A^*}$ is dissecting.
\end{enumerate}
\end{proposition}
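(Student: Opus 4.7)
The plan is to argue by contradiction. Assume $[\overrightarrow{O_1}]=[\overrightarrow{O_2}]$; then Lemma~\ref{orientation1} produces pairwise disjoint signed circuits $\{\overrightarrow{C_i}\}_{i\in I}$ and signed cocircuits $\{\overrightarrow{C_j^*}\}_{j\in J}$, all contained in $\overrightarrow{O_1}$, whose simultaneous reversal produces $\overrightarrow{O_2}$; in particular their supports partition $E_\rightleftarrows$. I will prove case~(1) in detail and leave case~(2) to an exact dualization. The organizing idea is to split on whether $J$ is empty, i.e.\ on whether any cocircuit is reversed at all.

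In the subcase $J\ne\emptyset$, the plan is to pick any $\overrightarrow{C_j^*}$ and verify that $-\overrightarrow{C_j^*}$ is already a potential cocircuit of $\overrightarrow{F^*}$. This should reduce to an edge-by-edge calculation from Table~\ref{table1}: since $\overrightarrow{C_j^*}|_e=\overrightarrow{O_1}|_e$ on its support and inspection of the four partition classes $B_1\cap B_2$, $B_1\setminus B_2$, $B_2\setminus B_1$, $B_1^c\cap B_2^c$ shows $\overrightarrow{F^*}|_e\in\{-\overrightarrow{O_1}|_e,\emptyset\}$ on $E_\rightleftarrows$, the arc $-\overrightarrow{O_1}|_e$ automatically sits in $-\overrightarrow{F^*}|_e^{c}$. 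The resulting potential cocircuit then directly contradicts the triangulating hypothesis on $\mathcal{A^*}$.

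The more delicate subcase is $J=\emptyset$, in which $E_\rightleftarrows=\biguplus_iC_i$. Here the dissecting hypothesis on $\mathcal{A}$ supplies a potential cocircuit $\overrightarrow{C^*}$ of $\overrightarrow{F}$, and my plan is to show that in fact $C^*\subseteq E_\rightrightarrows$, whereupon Lemma~\ref{theorem1lemma}(1) upgrades $\overrightarrow{C^*}$ to a potential cocircuit of $\overrightarrow{F^*}$ and again contradicts triangulating. To prove $C^*\cap E_\rightleftarrows=\emptyset$, I will examine $C^*\cap C_i$ for each $i$: Table~\ref{table1} forces both $\overrightarrow{C^*}|_e$ and $\overrightarrow{C_i}|_e$ to equal $\overrightarrow{O_1}|_e$ on any common edge (the $B_1\cap B_2$ piece is excluded because $\overrightarrow{F}|_e=\updownarrow$ kills potential cocircuits there), so the two signed objects would agree everywhere on their intersection, violating the orthogonality axiom (Lemma~\ref{orientation2}) unless that intersection is empty.

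Case~(2) runs in parallel: when $I\ne\emptyset$ one checks that $\overrightarrow{C_i}$ itself (no sign change) is a potential circuit of $\overrightarrow{F}$, contradicting $\mathcal{A}$ triangulating; when $I=\emptyset$ the dissecting hypothesis on $\mathcal{A^*}$ yields a potential circuit $\overrightarrow{C}$ of $\overrightarrow{F^*}$, and orthogonality against each $\overrightarrow{C_j^*}$ (where the two objects now automatically \emph{disagree} on every common edge) forces $C\subseteq E_\rightrightarrows$, after which $\overrightarrow{C}$ transfers into $\overrightarrow{F}$. The genuine obstacle throughout is precisely the $J=\emptyset$ (or dually $I=\emptyset$) subcase: the naive transfer of $\overrightarrow{C^*}$ from $\overrightarrow{F}$ to $\overrightarrow{F^*}$ fails whenever $C^*$ meets $E_\rightleftarrows$, and the dichotomy is introduced precisely so that orthogonality, applied against the available circuits $\overrightarrow{C_i}$, can annihilate this obstruction.
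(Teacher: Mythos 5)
Your proof is correct and follows essentially the same route as the paper's: the same contradiction setup via Lemma~\ref{orientation1}, the same Table~\ref{table1} computations (packaged in the paper as Lemma~\ref{theorem1lemma}), and the same mechanism of using orthogonality to show the potential cocircuit supplied by the dissecting hypothesis avoids $E_\rightleftarrows$ and hence transfers from $\overrightarrow{F}$ to $\overrightarrow{F^*}$. The only differences are organizational: you split on $J=\emptyset$ versus $J\neq\emptyset$ where the paper derives $J=\emptyset$ as an intermediate step, and you apply the orthogonality axiom directly where the paper routes it through Lemma~\ref{exclusive}.
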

\begin{proof}
Assume by contradiction that $\overrightarrow{O_1}$ and $\overrightarrow{O_2}$ are in the same circuit-cocircuit reversal class. By Lemma~\ref{orientation1}, there exist disjoint signed circuits $\{\overrightarrow{C_i}\}_{i\in I}$ and signed cocircuits $\{\overrightarrow{C^*_{j}}\}_{j\in J}$ in  $\overrightarrow{O_1}$ by reversing which we may obtain $\overrightarrow{O_2}$. 

(1) Because $\mathcal{A}$ is dissecting, the fourientation $\overrightarrow{F}$ has a potential cocircuit $\overrightarrow{D^*}$. We will show that $\overrightarrow{D^*}$ is also a potential cocircuit of $\overrightarrow{F^*}$, which contradicts that $\mathcal{A^*}$ is triangulating. 

Consider applying Lemma~\ref{theorem1lemma}. Note that $E_\rightleftarrows$ is the disjoint union of  $\{C_i\}_{i\in I}$ and $\{C^*_{j}\}_{j\in J}$. For any $j\in J$, let $E_0=C^*_j$ and apply Lemma~\ref{theorem1lemma}(2). Then we get  $\overrightarrow{F^*}|_{C^*_j}\subseteq\overrightarrow{O_2}|_{C^*_j}=-\overrightarrow{C^*_j}$. By definition, this implies that $-\overrightarrow{C^*_j}$ is a potential cocircuit of $\overrightarrow{F^*}$, which contradicts that $\mathcal{A^*}$ is triangulating. So, $J=\emptyset$. For any $i\in I$, let $E_0=C_i$ and apply Lemma~\ref{theorem1lemma}(2). Then we get $\overrightarrow{C_i}=\overrightarrow{O_1}|_{C_i}\subseteq\overrightarrow{F}|_{C_i}$. This means $\overrightarrow{C_i}$ is a potential circuit of $\overrightarrow{F}$. Because $\overrightarrow{D^*}$ is a potential cocircuit of $\overrightarrow{F}$, by Lemma~\ref{exclusive}, $D^*\cap C_i=\emptyset$. Hence $D^*\subseteq E_\rightrightarrows$. By Lemma~\ref{theorem1lemma}(1), $\overrightarrow{D^*}$ is a potential cocircuit of $\overrightarrow{F^*}$, which gives the desired contradiction. 

(2) This part of the proof is dual to the previous one. To be precise, because $\mathcal{A^*}$ is dissecting, the fourientation $\overrightarrow{F^*}$ has a potential circuit $\overrightarrow{D}$. Then by applying Lemma 2.5, we may prove that $I=\emptyset$, $-\overrightarrow{C^*_j}$ is a potential cocircuit of $\overrightarrow{F^*}$, $D\subseteq E_\rightrightarrows$, and $D$ is a potential circuit of $\overrightarrow{F}$. The last claim contradicts that $\mathcal{A}$ is triangulating.

\end{proof}

\begin{Rem}
If we just want to show the map $f_{\mathcal{A},\mathcal{A^*}}$ is injective under the assumption of Proposition~\ref{th1prop}, the proof is short and works even for oriented matroids. Indeed, assume by contradiction that $\overrightarrow{O_1}=\overrightarrow{O_2}$. Then by Lemma~\ref{theorem1lemma}(1), we get $\overrightarrow{F}=\overrightarrow{F^*}$, which contradicts the definitions of the triangulating atlas and dissecting atlas. 
\end{Rem}

\begin{corollary}[Theorem~\ref{main1}]
 Given a pair of dissecting atlases $(
\mathcal{A},\mathcal{A^*})$ of a regular matroid $\mathcal{M}$, if at least one of the atlases is triangulating, then the map 
    \begin{align*}
    \overline{f}_{\mathcal{A},\mathcal{A^*}}:\{\text{bases of }\mathcal{M}\} & \to \{\text{circuit-cocircuit reversal classes of }\mathcal{M}\} \\
    B & \mapsto [\overrightarrow{B} \cap \overrightarrow{B^*}]
    \end{align*}
is bijective, where $[\overrightarrow{B} \cap \overrightarrow{B^*}]$ denotes the circuit-cocircuit reversal class containing the orientation $\overrightarrow{B} \cap \overrightarrow{B^*}$.  
\end{corollary}
\begin{proof}
By Proposition~\ref{th1prop}, the map is injective. Because the number of circuit-cocircuit reversal classes equals the number of bases \cite[Theorem 3.10]{G2}, the map is bijective. 
\end{proof}

\subsection{Proof of Theorem~\ref{main2}}

We will prove Theorem~\ref{main2} in this section. For the construction of $\varphi_{\mathcal{A},\mathcal{A^*}}$, see Definition~\ref{def-ext}. 
We will prove that $\varphi_{\mathcal{A},\mathcal{A^*}}$ has the following property, which is stronger than bijectivity. 
\begin{definition}
Let $\varphi$ be a map from the set of orientations of $\mathcal{M}$ to the set of subsets of $E$. We say the map $\varphi$ is \emph{tiling} if for any two distinct orientations $\overrightarrow{O_1}$ and $\overrightarrow{O_2}$, there exists an edge $e$ such that $\overrightarrow{O_1}|_e\neq\overrightarrow{O_2}|_e$ and $e\in \varphi(\overrightarrow{O_1})\bigtriangleup \varphi(\overrightarrow{O_2})$. (The symbol $\bigtriangleup$ denotes the symmetric difference of two sets,)
\end{definition}
\begin{Rem}
In \cite[Section 4]{D2}, it is shown that $\varphi$ is tiling if and only if it canonically induces a half-open decomposition of the hypercube $[0,1]^E$, where $[0,1]^E$ is viewed as the set of continuous orientations of $\mathcal{M}$.    
\end{Rem} 
\begin{lemma}\label{lem: tiling implies bijective}
If $\varphi$ is tiling, then $\varphi$ is bijective. 
\end{lemma}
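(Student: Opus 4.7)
The plan is to prove injectivity and then invoke a counting argument. For injectivity, suppose $\overrightarrow{O_1}$ and $\overrightarrow{O_2}$ are distinct orientations. By the tiling hypothesis there exists an edge $e$ with $\overrightarrow{O_1}|_e \neq \overrightarrow{O_2}|_e$ and $e \in \varphi(\overrightarrow{O_1}) \bigtriangleup \varphi(\overrightarrow{O_2})$; in particular, the symmetric difference of $\varphi(\overrightarrow{O_1})$ and $\varphi(\overrightarrow{O_2})$ is nonempty, so $\varphi(\overrightarrow{O_1}) \neq \varphi(\overrightarrow{O_2})$. This is the contrapositive of injectivity, so $\varphi$ is injective.

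For surjectivity, I would use cardinalities. The set of orientations of $\mathcal{M}$ has size $2^{|E|}$, since each edge is independently assigned one of two arcs (here we are only talking about orientations, not general fourientations, so no bioriented or unoriented edges are allowed in the domain of $\varphi$). On the other hand, the codomain, the set of subsets of $E$, also has size $2^{|E|}$. An injection between finite sets of equal cardinality is automatically a bijection, and we conclude.

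The argument is essentially routine once the definitions are unpacked, and there is no real obstacle; the only point to be careful about is that the domain of $\varphi$ consists of genuine orientations (not fourientations), so that the counting $|\{\text{orientations}\}| = 2^{|E|} = |\{\text{subsets of }E\}|$ is valid.
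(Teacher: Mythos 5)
Your proof is correct and follows the same route as the paper: injectivity from the nonempty symmetric difference guaranteed by the tiling condition, then bijectivity from the fact that the domain and codomain are finite sets of equal cardinality (both $2^{|E|}$), which the paper states as "equinumerous" without spelling out the count.
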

\begin{proof}
The property $e\in \varphi(\overrightarrow{O_1})\bigtriangleup \varphi(\overrightarrow{O_2})$ in the definition implies $\varphi(\overrightarrow{O_1})\neq\varphi(\overrightarrow{O_2})$. Hence $\varphi$ is injective. The domain and codomain of $\varphi$ are equinumerous, so $\varphi$ is bijective. 
\end{proof}

Now we begin to show $\varphi_{\mathcal{A},\mathcal{A^*}}$ is tiling. 
\begin{theorem}\label{strong injectivity}
If either of the following two assumptions holds, then the map $\varphi_{\mathcal{A},\mathcal{A^*}}$ is tiling. 
\begin{enumerate}
    \item The external atlas $\mathcal{A}$ is dissecting and the internal atlas $\mathcal{A^*}$ is triangulating. 
    \item The external atlas $\mathcal{A}$ is triangulating and the internal atlas $\mathcal{A^*}$ is dissecting.
\end{enumerate}  
\end{theorem}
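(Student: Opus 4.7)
The plan is to verify the tiling property by a case analysis on whether the bases $B_1,B_2$ determined by $\overrightarrow{O_1},\overrightarrow{O_2}$ via $\overline{f}_{\mathcal{A},\mathcal{A^*}}^{-1}$ agree. First, for each $i\in\{1,2\}$, I set $B_i=\overline{f}_{\mathcal{A},\mathcal{A^*}}^{-1}([\overrightarrow{O_i}])$ and $\tilde{\overrightarrow{O_i}}=f_{\mathcal{A},\mathcal{A^*}}(B_i)$, and by Lemma~\ref{orientation1} fix disjoint signed circuits $\{\overrightarrow{C_k^{(i)}}\}_k$ and cocircuits $\{\overrightarrow{C_l^{*(i)}}\}_l$ whose reversal in $\tilde{\overrightarrow{O_i}}$ yields $\overrightarrow{O_i}$; write $R_C^{(i)}=\bigcup_k C_k^{(i)}$ and $R_{C^*}^{(i)}=\bigcup_l C_l^{*(i)}$. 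Unfolding Definition~\ref{def-ext} and using Lemma~\ref{exclusive} to conclude $R_C^{(i)}\cap R_{C^*}^{(j)}=\emptyset$ for all $i,j$, I obtain the pointwise membership criterion
\[
e\in\varphi_{\mathcal{A},\mathcal{A^*}}(\overrightarrow{O_i})\ \iff\ e\in R_C^{(i)}\ \text{ or }\ \bigl(e\in B_i\text{ and }e\notin R_{C^*}^{(i)}\bigr),
\]
which drives the rest of the argument.

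For Case A ($B_1=B_2=B$), I set $\tilde{\overrightarrow{O}}=f_{\mathcal{A},\mathcal{A^*}}(B)$ and decompose
\[
\tfrac{1}{2}\bigl(\overrightarrow{O_1}-\overrightarrow{O_2}\bigr)=\overrightarrow{u}+\overrightarrow{u^*},\qquad \overrightarrow{u}\in\ker_{\mathbb{R}}(M),\ \overrightarrow{u^*}\in\im_{\mathbb{R}}(M^T),
\]
the two pieces being the net circuit and cocircuit reversal vectors; a direct computation shows that $\overrightarrow{u^*}|_e\neq 0$ exactly on $R_{C^*}^{(1)}\bigtriangleup R_{C^*}^{(2)}$ and similarly that $\overrightarrow{u}|_e\neq 0$ exactly on $R_C^{(1)}\bigtriangleup R_C^{(2)}$. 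The key new input is that, because $M|_B$ is invertible, the restriction maps $\im_{\mathbb{R}}(M^T)\to\mathbb{R}^B$ and $\ker_{\mathbb{R}}(M)\to\mathbb{R}^{E\setminus B}$ are both isomorphisms. Since $\overrightarrow{O_1}\neq\overrightarrow{O_2}$ and $\ker_{\mathbb{R}}(M)\perp\im_{\mathbb{R}}(M^T)$, at least one of $\overrightarrow{u},\overrightarrow{u^*}$ is nonzero; by symmetry I assume $\overrightarrow{u^*}\neq 0$. The restriction isomorphism then supplies an edge $e\in B$ with $\overrightarrow{u^*}|_e\neq 0$, placing $e$ in exactly one of $R_{C^*}^{(1)},R_{C^*}^{(2)}$; Lemma~\ref{exclusive} further forbids $e\in R_C^{(1)}\cup R_C^{(2)}$, so $\overrightarrow{O_1}|_e\neq\overrightarrow{O_2}|_e$, and the pointwise $\varphi$-criterion immediately places $e\in\varphi(\overrightarrow{O_1})\bigtriangleup\varphi(\overrightarrow{O_2})$.

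For Case B ($B_1\neq B_2$), Theorem~\ref{main1} puts $\overrightarrow{O_1},\overrightarrow{O_2}$ in distinct circuit-cocircuit reversal classes. I invoke the fourientations $\overrightarrow{F}=\overrightarrow{B_1}\cap(-\overrightarrow{B_2})$ and $\overrightarrow{F^*}=(\overrightarrow{B_1^*}\cap(-\overrightarrow{B_2^*}))^c$ from the proof of Proposition~\ref{th1prop}. Under assumption (1), the dissecting property of $\mathcal{A}$ supplies a potential cocircuit $\overrightarrow{D^*}$ of $\overrightarrow{F}$, while the triangulating property of $\mathcal{A^*}$ forbids $\overrightarrow{D^*}$ from being a potential cocircuit of $\overrightarrow{F^*}$; Table~\ref{table1} translates this mismatch into an edge $e_0\in D^*\cap\bigl((B_1\bigtriangleup B_2)\cup(B_1^c\cap B_2^c)\bigr)$ at which $\tilde{\overrightarrow{O_1}}|_{e_0}\neq\tilde{\overrightarrow{O_2}}|_{e_0}$, already delivering a tiling witness for the special pair $(\tilde{\overrightarrow{O_1}},\tilde{\overrightarrow{O_2}})$. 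To upgrade $e_0$ (or a suitable nearby edge) to a tiling witness for the original pair $(\overrightarrow{O_1},\overrightarrow{O_2})$, I reapply the Case A restriction-isomorphism argument with the basis $B_1$ or $B_2$ chosen appropriately, in order to control how the further reversals transporting $\tilde{\overrightarrow{O_i}}$ to $\overrightarrow{O_i}$ interact with the $R_C^{(i)},R_{C^*}^{(i)}$ structure at the candidate edge. Assumption (2) is handled dually, starting from a potential circuit of $\overrightarrow{F^*}$.

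The main obstacle is this upgrade step in Case B: the $\tilde{\overrightarrow{O_i}}$-level witness $e_0$ may, after the additional circuit/cocircuit reversals, lose either the property $\overrightarrow{O_1}|_{e_0}\neq\overrightarrow{O_2}|_{e_0}$ or its membership in $\varphi(\overrightarrow{O_1})\bigtriangleup\varphi(\overrightarrow{O_2})$. Handling this requires the joint deployment of (i) the restriction isomorphisms $\im_{\mathbb{R}}(M^T)\cong\mathbb{R}^{B_i}$ from Case A, (ii) the edge-disjointness of potential circuits and potential cocircuits (Lemma~\ref{exclusive}), and (iii) the fourientation-level bookkeeping recorded in Lemma~\ref{theorem1lemma} --- essentially the same four-fold interplay that drives Case A, now operating within a more elaborate base configuration.
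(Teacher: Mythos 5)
Your Case A ($B_1=B_2$) is correct, and in fact the orthogonal decomposition of $\tfrac12(\overrightarrow{O_1}-\overrightarrow{O_2})$ into $\ker_{\mathbb{R}}(M)\oplus\im_{\mathbb{R}}(M^T)$ together with the restriction isomorphisms $\im_{\mathbb{R}}(M^T)\cong\mathbb{R}^{B}$ and $\ker_{\mathbb{R}}(M)\cong\mathbb{R}^{E\setminus B}$ is a clean alternative to the paper's treatment of that case (the paper instead derives it from its Claims 1 and 2 about components of $\overrightarrow{C_1}-\overrightarrow{C_2}$ and $\overrightarrow{C_2^*}-\overrightarrow{C_1^*}$). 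Note, however, that Case A is exactly the part of the theorem that does not use the dissecting or triangulating hypotheses at all.

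The genuine gap is Case B, which is the heart of the theorem, and your proposal explicitly leaves its key step unresolved. Two concrete problems. First, the edge $e_0\in D^*$ at which $\overrightarrow{F}$ and $\overrightarrow{F^*}$ differ need not lie in $B_1\bigtriangleup B_2$ (it can lie in $B_1^c\cap B_2^c$, where $\overrightarrow{F^*}|_{e_0}=\emptyset$), so it is not even guaranteed to be a tiling witness for the pair $(f_{\mathcal{A},\mathcal{A^*}}(B_1),f_{\mathcal{A},\mathcal{A^*}}(B_2))$. Second, and more seriously, the ``upgrade'' from a witness at the level of $f_{\mathcal{A},\mathcal{A^*}}(B_i)$ to a witness for $(\overrightarrow{O_1},\overrightarrow{O_2})$ is precisely where all the work lies, and ``reapply the Case A restriction-isomorphism argument'' does not do it: the additional reversals can both flip the orientation at $e_0$ back into agreement and move $e_0$ in or out of the $\varphi$-images, and no mechanism is given for controlling either effect. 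The paper avoids this forward construction entirely: it argues by contradiction, assuming no witness exists, and feeds that assumption ($\dagger$) into a full case analysis (Table~\ref{table2}) to establish that $C_1\cap C_2^*=C_1^*\cap C_2=\emptyset$, that every component of $\overrightarrow{C_1}-\overrightarrow{C_2}$ is a potential circuit of $\overrightarrow{F}$ and every component of $\overrightarrow{C_2^*}-\overrightarrow{C_1^*}$ is a potential cocircuit of $\overrightarrow{F^*}$, whence the triangulating hypothesis forces $\overrightarrow{C_1^*}=\overrightarrow{C_2^*}$ and the potential cocircuit $\overrightarrow{D^*}$ of $\overrightarrow{F}$ supplied by the dissecting hypothesis transfers to $\overrightarrow{F^*}$, a contradiction. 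Your sketch would need an equivalent of all of this bookkeeping, and as written it acknowledges the obstacle without supplying the argument.
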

\begin{proof}
Let $\overrightarrow{O_A}$ and $\overrightarrow{O_B}$ be two different orientations of $\mathcal{M}$. Assume by contradiction that the desired edge $e$ does not exist. So, 

\begin{center}
for edges $e\in \varphi_{\mathcal{A},\mathcal{A^*}}(\overrightarrow{O_A})\bigtriangleup \varphi_{\mathcal{A},\mathcal{A^*}}(\overrightarrow{O_B})$, we have $\overrightarrow{O_A}|_e=\overrightarrow{O_B}|_e$.  ($\dagger$)
\end{center}

By the construction of $\varphi_{\mathcal{A},\mathcal{A^*}}$, we can find bases $B_1$ and $B_2$ such that $\overrightarrow{O_A}$ is obtained from reversing disjoint signed circuits  $\{\overrightarrow{C_{1,i}}\}_{i\in I_1}$ and signed cocircuits $\{\overrightarrow{C^*_{1,j}}\}_{j\in J_1}$ in  $\overrightarrow{O_1}:=f_{\mathcal{A},\mathcal{A^*}}(B_1)$,  $\overrightarrow{O_B}$ is obtained from reversing disjoint signed circuits $\{\overrightarrow{C_{2,i}}\}_{i\in I_2}$ and signed cocircuits $\{\overrightarrow{C^*_{2,j}}\}_{j\in J_2}$ in  $\overrightarrow{O_2}:=f_{\mathcal{A},\mathcal{A^*}}(B_2)$, \[\varphi_{\mathcal{A},\mathcal{A^*}}(\overrightarrow{O_A})=(B_1\cup C_1) \backslash C_1^*,\] and \[\varphi_{\mathcal{A},\mathcal{A^*}}(\overrightarrow{O_B})=(B_2\cup C_2) \backslash C_2^*,\]
where $C_k$ is all the underlying edges of $\{\overrightarrow{C_{k,i}}\}_{i\in I_k}$ and $C_k^*$ is all the underlying edges of $\{\overrightarrow{C_{k,i}^*}\}_{i\in J_k}$ for $k=1,2$. We also denote $\overrightarrow{C_k}=\biguplus_{i\in I_k}\overrightarrow{C_{k,i}}$ and $\overrightarrow{C_k^*}=\biguplus_{j\in J_k}\overrightarrow{C_{k,j}^*}$. See Figure~\ref{fig: two phi maps} for a summary.

\begin{figure}[ht]
\centering
\begin{tikzcd}
B_1 \arrow[r, "f_{\mathcal{A},\mathcal{A^*}}", maps to] \arrow[d, "\substack{\text{add }C_1\\ \text{ and remove }C_1^*}"', dashed] & \overrightarrow{O_1} \arrow[d, "\substack{\text{reverse }\overrightarrow{C_1}\\  \text{and } \overrightarrow{C_1^*}}", dashed] & & &B_2 \arrow[r, "f_{\mathcal{A},\mathcal{A^*}}", maps to] \arrow[d, "\substack{\text{add }C_2\\\text{and remove }C_2^*}"', dashed] & \overrightarrow{O_2} \arrow[d, "\substack{\text{reverse }\overrightarrow{C_2} \\ \text{ and } \overrightarrow{C_2^*}}", dashed]\\
\varphi_{\mathcal{A},\mathcal{A^*}}(\overrightarrow{O_A}) &  \overrightarrow{O_A} \arrow[l, "\varphi_{\mathcal{A},\mathcal{A^*}}", maps to] & & &\varphi_{\mathcal{A},\mathcal{A^*}}(\overrightarrow{O_B})  &  \overrightarrow{O_B} \arrow[l, "\varphi_{\mathcal{A},\mathcal{A^*}}", maps to]
\end{tikzcd}
\caption{Some objects and their relations in Definition~\ref{def-ext}.}
\label{fig: two phi maps}
\end{figure}

We still adopt the notations \[\overrightarrow{F}=\overrightarrow{B_1}\cap(-\overrightarrow{B_2}), \overrightarrow{F^*}=(\overrightarrow{B_1^*}\cap(-\overrightarrow{B_2^*}))^c,\] introduced in the previous section.

We compute $\overrightarrow{F}$ and $\overrightarrow{F^*}$ in terms of  $\overrightarrow{C_1},\overrightarrow{C_2},\overrightarrow{C^*_1}$, and $\overrightarrow{C^*_2}$, and the results are summarized in Table~\ref{table2}. The next two paragraphs will explain the table.

\begin{table}
\centering
\bgroup
\def\arraystretch{1.5}
\begin{tabular}{ |c|c|c|c|c| }
\hline
position of $e$ and label & $\alpha$: $B_1\cap B_2$ & $\beta$: $B_1\backslash B_2$ & $\gamma$: $B_2\backslash B_1$ & $\delta$: $B_1^c\cap B_2^c$\\
\hline
\multirow{2}{3cm}{1: $C_1\cap C_2$}  & $\overrightarrow{F}=\updownarrow$  & $\overrightarrow{F}=-\overrightarrow{C_2}$   & $\overrightarrow{F}=\overrightarrow{C_1}$  & $\overrightarrow{F}=\overrightarrow{C_1}\cap(-\overrightarrow{C_2})$\\ 

& & $\overrightarrow{F^*}=-\overrightarrow{C_1}$ & $\overrightarrow{F^*}=\overrightarrow{C_2}$ & $\overrightarrow{F^*}=\emptyset$\\

\hline
\multirow{2}{3cm}{2: $C^*_1\cap C^*_2$}  & $\overrightarrow{F}=\updownarrow$  &  $\overrightarrow{F}=-\overrightarrow{C^*_2}$  & $\overrightarrow{F}=\overrightarrow{C^*_1}$  & \\ 

& $\overrightarrow{F^*}=(-\overrightarrow{C^*_1})\cup\overrightarrow{C^*_2}$ & $\overrightarrow{F^*}=-\overrightarrow{C^*_1}$ & $\overrightarrow{F^*}=\overrightarrow{C^*_2}$ & $\overrightarrow{F^*}=\emptyset$\\

\hline

3: $C^*_1\backslash(C_2\cup C^*_2)$ &  $\overrightarrow{F^*}=-\overrightarrow{C^*_1}$ $_\dagger$& $\overrightarrow{F^*}=-\overrightarrow{C^*_1}$ & $\overrightarrow{F^*}=-\overrightarrow{C^*_1}$ $_\dagger$& $\overrightarrow{F^*}=\emptyset$\\

\hline

4: $C^*_2\backslash(C_1\cup C^*_1)$ &  $\overrightarrow{F^*}=\overrightarrow{C^*_2}$ $_\dagger$ & $\overrightarrow{F^*}=\overrightarrow{C^*_2}$ $_\dagger$ & $\overrightarrow{F^*}=\overrightarrow{C^*_2}$ & $\overrightarrow{F^*}=\emptyset$\\
\hline

5: $C_1\backslash(C_2\cup C^*_2)$ &  $\overrightarrow{F}=\updownarrow$ & $\overrightarrow{F}=\overrightarrow{C_1}$ $_\dagger$& $\overrightarrow{F}=\overrightarrow{C_1}$ & $\overrightarrow{F}=\overrightarrow{C_1}$ $_\dagger$\\
\hline

6: $C_2\backslash(C_1\cup C^*_1)$ &  $\overrightarrow{F}=\updownarrow$ & $\overrightarrow{F}=-\overrightarrow{C_2}$ & $\overrightarrow{F}=-\overrightarrow{C_2}$ $_\dagger$& $\overrightarrow{F}=-\overrightarrow{C_2}$ $_\dagger$\\
\hline

7: $(C_1\cup C_2\cup C^*_1\cup C^*_2)^c$ &  $\overrightarrow{F}=\updownarrow$ & $\overrightarrow{F}=\overrightarrow{F^*}$ $_\dagger$& $\overrightarrow{F}=\overrightarrow{F^*}$ $_\dagger$& $\overrightarrow{F^*}=\emptyset$\\
\hline
\end{tabular}
\egroup
\caption{The computational results used in the proof of Proposition~\ref{strong injectivity}. Two paragraphs in the proof explain the table.}
\label{table2}

\vspace{-8mm}
\end{table}

All the edges $e$ are partitioned into $28$ classes according to whether $e$ is in $B_1$ and/or in $B_2$ (columns), and whether $e$ is in $C_1$, $C_2$, $C_1^*$, and/or $C_2^*$ (rows). Regarding the rows, we start with 4 large classes $(C_1\cup C^*_1) \cap (C_2 \cup C^*_2)$, $(C_1\cup C^*_1)^c \cap (C_2 \cup C^*_2)$, $(C_1\cup C^*_1) \cap (C_2 \cup C^*_2)^c$, and $(C_1\cup C^*_1)^c \cap (C_2 \cup C^*_2)^c$. Using $C_1\cap C^*_1=C_2\cap C^*_2=\emptyset$, we may partition these 4 large classes into small classes. However, two items $C_1\cap C_2^*$ and $C_1^*\cap C_2$ are missing in the table. This is because they are empty. Indeed, if one of them, say $C_1\cap C_2^*$, is not empty, then by Lemma~\ref{orientation2}, there exists an edge $e\in C_1\cap C_2^*$ such that $\overrightarrow{C_1}|_e\neq\overrightarrow{C_2^*}|_e$. This implies $\overrightarrow{O_A}|_e\neq\overrightarrow{O_B}|_e$. By the definition of $\varphi_{\mathcal{A},\mathcal{A^*}}$, $e\in C_1\cap C_2^*$ implies $e\in\varphi_{\mathcal{A},\mathcal{A^*}}(\overrightarrow{O_A})\bigtriangleup \varphi_{\mathcal{A},\mathcal{A^*}}(\overrightarrow{O_B})$. By the assumption ($\dagger$), we have $\overrightarrow{O_A}|_e=\overrightarrow{O_B}|_e$, which gives the contradiction. So, the rows of the table cover all the cases.

In the table, we view $\overrightarrow{C_1},\overrightarrow{C_2},\overrightarrow{C^*_1}$, and $\overrightarrow{C^*_2}$ as sets of arcs, so the union and intersection make sense. We omit ``$|_e$'' as in Table~\ref{table1}. We only give the useful results, so some fourientations in some cells are not given. The computation is straightforward. If there is no $\dagger$ in the cell, then we can get the result by making use of Table~\ref{table1} where $\overrightarrow{O_k}$ can be replaced by $\overrightarrow{C_k}$ when $e\in C_k$ and by $\overrightarrow{C_k^*}$ when $e\in C_k^*$, for $k=1,2$. If there is a $\dagger$ in the cell, then the computation makes use of the assumption ($\dagger$). For example, for cells $3\alpha$ and $3\gamma$, since $e\in C_1^*$ and $e\in B_2$, we have $e\notin\varphi_{\mathcal{A},\mathcal{A^*}}(O_A)$ and $e\in\varphi_{\mathcal{A},\mathcal{A^*}}(O_B)$. By ($\dagger$), we have $\overrightarrow{O_A}|_e=\overrightarrow{O_B}|_e$, and hence $\overrightarrow{C_1^*}|_e=\overrightarrow{O_1}|_e=-\overrightarrow{O_2}|_e$. Combining this formula and Table~\ref{table1}, we obtain the formulas in $3\alpha$ and $3\gamma$. Similarly, we obtain the formulas in cells with $\dagger$ in rows 4,5, and 6. For cells $7\beta$ and $7\gamma$, we still have $\overrightarrow{O_A}|_e=\overrightarrow{O_B}|_e$ due to ($\dagger$), which implies $\overrightarrow{O_1}|_e=-\overrightarrow{O_2}|_e$. Then by Table~\ref{table1}, we get $\overrightarrow{F}=\overrightarrow{F^*}$. 

Now we use the table to prove two claims.  

\textbf{Claim 1}: if $\overrightarrow{C_1}-\overrightarrow{C_2}\neq 0$, then each of its components (see Definition~\ref{component}) is a potential circuit of $\overrightarrow{F}$.    

By Lemma~\ref{conformal}, it suffices to check that for any arc $\overrightarrow{e}\in\overrightarrow{C_1}\cup(-\overrightarrow{C_2})$ that is not canceled in $\overrightarrow{C_1}-\overrightarrow{C_2}$, we have $\overrightarrow{F}|_e=\overrightarrow{e}$ or $\overrightarrow{F}$ is bioriented. This follows directly from the rows 1, 5, and 6 in Table~\ref{table2} ($\overrightarrow{C_1}|_e=-\overrightarrow{C_2}|_e$ in row 1). 

Similarly, we can prove the other claim. 

\textbf{Claim 2}: if $\overrightarrow{C_2^*}-\overrightarrow{C_1^*}\neq 0$, then each of its components is a potential cocircuit of $\overrightarrow{F^*}$. 

We are ready to complete the proof. 

When $B_1=B_2$, by definition $\overrightarrow{F}$ has no potential circuit, and $\overrightarrow{F^*}$ has no potential cocircuit. By Claim 1 and Claim 2, $\overrightarrow{C_1}=\overrightarrow{C_2}$ and $\overrightarrow{C_2^*}=\overrightarrow{C_1^*}$, which implies $\overrightarrow{O_A}=\overrightarrow{O_B}$. Contradiction. 

From now on we assume $B_1\neq B_2$. We will apply the dissecting and triangulating conditions (1) or (2) to get contradictions. 

(1) Because $\mathcal{A}$ is dissecting and $\mathcal{A^*}$ is triangulating, there exists a potential cocircuit $\overrightarrow{D^*}$ of $\overrightarrow{F}$, and there is no potential cocircuit of $\overrightarrow{F^*}$. The later one implies that $\overrightarrow{C_2^*}=\overrightarrow{C_1^*}$ by Claim 2. So, rows 3 and 4 in Table~\ref{table2} can be ignored in this case, and in cells $2\alpha$, $2\beta$, and $2\gamma$, $\overrightarrow{F}=\overrightarrow{F^*}$. 

Now we claim that the potential cocircuit $\overrightarrow{D^*}$ of $\overrightarrow{F}$ is also a potential cocircuit of $\overrightarrow{F^*}$, which gives the contradiction. Indeed, on one hand, for edges $e$ in rows 5 and 6, and for edges $e$ in row 1 such that $\overrightarrow{C_1}|_e=-\overrightarrow{C_2}|_e$, they are exactly the underlying edges of $\overrightarrow{C_1}-\overrightarrow{C_2}$, and hence by Claim 1, Lemma~\ref{exclusive}, and Remark~\ref{component2}, as a potential cocircuit of $\overrightarrow{F}$, $\overrightarrow{D^*}$ does not use these edges at all. On the other hand, for the remaining edges $e$, which are those in rows 2 and 7, and in row 1 such that $\overrightarrow{C_1}|_e=\overrightarrow{C_2}|_e$, we have either $\overrightarrow{F}|_e=\overrightarrow{F^*}|_e$ or $\overrightarrow{F^*}|_e=\emptyset$. So, $\overrightarrow{D^*}$ is also a potential cocircuit of $\overrightarrow{F^*}$.

(2) This part can be proved by a similar argument. 
\end{proof}

\begin{corollary}[Theorem~\ref{main2}(1)]
The map $\varphi_{\mathcal{A},\mathcal{A^*}}$ is bijective.    
\end{corollary}
\begin{proof}
This is a direct consequence of Lemma~\ref{lem: tiling implies bijective} and Theorem~\ref{strong injectivity}.    
\end{proof}

Now we prove the rest of Theorem~\ref{main2}.

\begin{theorem}[Theorem~\ref{main2}(2)(3)]\label{main2(2)(3)}
Under either of the assumptions of Proposition~\ref{strong injectivity} on the atlases $\mathcal{A}$ and $\mathcal{A^*}$, we have the following properties of $\varphi_{\mathcal{A},\mathcal{A^*}}$.

(1) The image of the independent sets of $\mathcal{M}$ under the bijection $\varphi^{-1}_{\mathcal{A},\mathcal{A^*}}$ is a representative set of the circuit reversal classes of $\mathcal{M}$. 

(2)  The image of the spanning sets of $\mathcal{M}$ under the bijection $\varphi^{-1}_{\mathcal{A},\mathcal{A^*}}$ is a representative set of the cocircuit reversal classes of $\mathcal{M}$. 
\end{theorem}
\begin{proof}
Recall that the map
    \begin{align*}
    \varphi_{\mathcal{A},\mathcal{A^*}}:\{\text{orientations of }\mathcal{M}\} & \to \{\text{subsets of }E\} \\
    \overrightarrow{O} & \mapsto (B\cup \biguplus_{i\in I}C_i)\backslash \biguplus_{j\in J}C_j^*
    \end{align*}
is a bijection, where $B$ is the unique basis such that $f_{\mathcal{A},\mathcal{A^*}}(B)\in [\overrightarrow{O}]$, and the orientations $f_{\mathcal{A},\mathcal{A^*}}(B)$ and $\overrightarrow{O}$ differ by  disjoint signed circuits $\{\overrightarrow{C_i}\}_{i\in I}$ and cocircuits $\{\overrightarrow{C_j^*}\}_{j\in J}$.

Let $A=\varphi_{\mathcal{A},\mathcal{A^*}}(\overrightarrow{O})$. Then $A$ is an independent set $\Leftrightarrow$ $I=\emptyset$ (due to Lemma~\ref{orientation1}) $\Leftrightarrow$ The orientations $\overrightarrow{O}$ and $f_{\mathcal{A},\mathcal{A^*}}(B)$ are in the same cocircuit reversal class. 

Because the set $\{f_{\mathcal{A},\mathcal{A^*}}(B):B\text{ is a basis}\}$ is a representative set of the circuit-cocircuit reversal classes (Theorem~\ref{main1}), the set  $\{\varphi^{-1}_{\mathcal{A},\mathcal{A^*}}(A):A\text{ is independent}\}$ is a representative set of the circuit-reversal classes. 

This proves (1). Similarly, (2) also holds. 
\end{proof}

\section{Signatures, the BBY bijection, and the Bernardi bijection}\label{signature}
In this section we will use our theory to recover and generalize the work in \cite{BBY}, \cite{D2}, and \cite{Bernardi}. In particular, Theorem~\ref{trig-intro}, Proposition~\ref{prop: triangulating sign rep}, Theorem~\ref{main1-sign}, Theorem~\ref{main2-sign}, and Theorem~\ref{triangular} will be proved. 

To do this, we will build the connection between circuit signatures (resp. cocircuit signatures) and external atlases (resp. internal atlases) of the regular matroid $\mathcal{M}$. We will also see how the BBY bijection (resp. the extended BBY bijection) and the Bernardi bijection become a special case of Theorem~\ref{main1} (resp. Theorem~\ref{main2}). In particular, the acyclic signatures used to define the BBY bijection will be generalized to \emph{triangulating signatures}.

\subsection{Signatures and atlases}

Recall that given a circuit signature $\sigma$, we may construct the external atlas $\mathcal{A_\sigma}$ from $\sigma$ such that for each externally oriented basis $\overrightarrow{B}\in\mathcal{A_\sigma}$, each external arc $\overrightarrow{e}\in\overrightarrow{B}$ is oriented according to the orientation of the fundamental circuit $C(B,e)$ in $\sigma$. Similarly, we may construct the internal atlas $\mathcal{A}^*_{\sigma^*}$.

We now show that all the triangulating atlases can be obtained in this way. Moreover, they must come from triangulating signatures. The following lemma is trivial but useful. 
\begin{lemma}\label{funda}
Every circuit of $\mathcal{M}$ is a fundamental circuit $C(B,e)$ for some basis $B$ and some edge $e$. Dually, every cocircuit of $\mathcal{M}$ is a fundamental cocircuit $C^*(B,e)$ for some basis $B$ and some edge $e$. 
\end{lemma}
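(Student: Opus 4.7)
The plan is to prove the circuit statement directly from the definitions of basis and fundamental circuit, and then derive the cocircuit statement by matroid duality. The key observation is that to realize a given circuit $C$ as a fundamental circuit, we need to build a basis $B$ that contains all but one edge of $C$.

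First, given any circuit $C$ of $\mathcal{M}$, I would pick an arbitrary edge $e \in C$. By minimality of the circuit, the set $C \setminus \{e\}$ is independent in $\mathcal{M}$. I then extend $C \setminus \{e\}$ to a basis $B$ of $\mathcal{M}$, which is possible by the standard independent-set extension property of matroids. I claim $e \notin B$: if $e$ were in $B$, then $C \subseteq B$ would exhibit a circuit inside the independent set $B$, a contradiction. Hence $e$ is external with respect to $B$, and $C \subseteq B \cup \{e\}$. Since $B \cup \{e\}$ contains a unique circuit, namely the fundamental circuit $C(B,e)$, we conclude $C = C(B,e)$.

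For the dual statement, I would invoke matroid duality rather than redo the argument. Recall that the cocircuits of $\mathcal{M}$ are precisely the circuits of the dual matroid $\mathcal{M}^*$, and that $B$ is a basis of $\mathcal{M}$ if and only if $E \setminus B$ is a basis of $\mathcal{M}^*$. Moreover, for $e \in B$, the fundamental cocircuit $C^*(B,e)$ of $\mathcal{M}$ is exactly the fundamental circuit $C(E \setminus B, e)$ of $\mathcal{M}^*$. Thus, applying the first part of the lemma to $\mathcal{M}^*$ realizes an arbitrary cocircuit of $\mathcal{M}$ as a fundamental cocircuit.

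There is no real obstacle here; the argument is entirely standard matroid theory. The only small point worth verifying is that extending $C \setminus \{e\}$ to a basis automatically avoids $e$, which follows from the circuit axiom as above. The dual part is a formal translation through $\mathcal{M} \leftrightarrow \mathcal{M}^*$ and requires no additional work beyond recalling the definition of the dual.
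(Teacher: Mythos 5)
Your proof is correct; the paper itself gives no argument for this lemma (it is simply declared ``trivial but useful''), and your extension-of-$C\setminus\{e\}$-to-a-basis argument together with the duality translation is exactly the standard reasoning the author is implicitly relying on. No gaps.
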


\begin{theorem}[Theorem~\ref{trig-intro}]\label{trig}
\begin{enumerate}
    \item The map  \begin{align*}
    \alpha:\{\text{triangulating circuit sig. of }\mathcal{M}\} & \to \{\text{triangulating external atlases of }\mathcal{M}\} \\
    \sigma & \mapsto \mathcal{A_\sigma}
    \end{align*}
    is a bijection.
    
    \item The map  \begin{align*}
    \alpha^*:\{\text{triangulating cocircuit sig. of }\mathcal{M}\} & \to \{\text{triangulating internal atlases of }\mathcal{M}\} \\
    \sigma^* & \mapsto \mathcal{A}^*_{\sigma^*}
    \end{align*}
    is a bijection.
\end{enumerate}
\end{theorem}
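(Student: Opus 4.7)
The plan is to establish (1); statement (2) follows by applying (1) to the dual matroid $\mathcal{M}^*$, which swaps circuits with cocircuits and external with internal. To show $\alpha$ is bijective, I will verify (a) well-definedness (that $\mathcal{A}_\sigma$ really is a triangulating atlas when $\sigma$ is a triangulating signature), (b) injectivity, and (c) surjectivity. The key technical input throughout is Lemma~\ref{funda}: every circuit $C$ arises as a fundamental circuit $C(B,e)$ for some basis $B$ and some $e\notin B$, so values of the signature can always be read off from the atlas.

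For well-definedness (a) I would argue by contradiction. Suppose $\sigma$ is triangulating and $\overrightarrow{C}$ is a potential circuit of $\overrightarrow{B_1}\cap(-\overrightarrow{B_2})$ for some distinct bases $B_1,B_2$. Then $\overrightarrow{C}\subseteq\overrightarrow{B_1}$ and $-\overrightarrow{C}\subseteq\overrightarrow{B_2}$, so the triangulating condition on $\sigma$ forces both $\overrightarrow{C}$ and $-\overrightarrow{C}$ to lie in $\sigma$, a contradiction. For injectivity (b), given $\sigma$ and any circuit $C$, pick a pair $(B,e)$ with $C=C(B,e)$; by the construction of $\mathcal{A}_\sigma$ we have $\sigma(C)|_e=\overrightarrow{B}|_e$, where $\overrightarrow{B}\in\mathcal{A}_\sigma$. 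Since a signed circuit is determined by its value on any single edge of its support, $\sigma$ is recovered uniquely from $\mathcal{A}_\sigma$.

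The main content is surjectivity (c). Given a triangulating atlas $\mathcal{A}$, I construct a signature $\sigma$ as follows: for each circuit $C$, choose any pair $(B,e)$ with $C=C(B,e)$ and declare $\sigma(C)$ to be the unique signed circuit on $C$ whose $e$-th entry matches $\overrightarrow{B}|_e$ (this extends consistently because the other edges of $C$ lie in $B$ and are therefore bioriented in $\overrightarrow{B}$). I would then verify three things. First, independence of the chosen pair: if $C=C(B_1,e_1)=C(B_2,e_2)$ led to opposite signed circuits $\pm\overrightarrow{C}$, the same biorientation observation yields $\overrightarrow{C}\subseteq\overrightarrow{B_1}$ and $-\overrightarrow{C}\subseteq\overrightarrow{B_2}$, violating the triangulating property of $\mathcal{A}$. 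Second, $\sigma$ is triangulating: if $\overrightarrow{C}\subseteq\overrightarrow{B}\in\mathcal{A}$ yet $\sigma(C)=-\overrightarrow{C}$, then the reference pair $(B',e')$ used to define $\sigma(C)$ produces $-\overrightarrow{C}\subseteq\overrightarrow{B'}$, so $\overrightarrow{C}$ is a potential circuit of $\overrightarrow{B}\cap(-\overrightarrow{B'})$; the triangulating hypothesis forces $B=B'$, which in turn makes $\overrightarrow{B}|_{e'}$ simultaneously equal to $\overrightarrow{C}|_{e'}$ and $-\overrightarrow{C}|_{e'}$, a contradiction. Third, $\mathcal{A}_\sigma=\mathcal{A}$: applying the definition of $\sigma$ to the pair $(B,e)$ itself immediately gives $\sigma(C(B,e))|_e=\overrightarrow{B}|_e$, which matches the external arc that $\mathcal{A}_\sigma$ assigns to $e$ relative to $B$.

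The main obstacle is the well-definedness step inside surjectivity; once one sees how the biorientation of internal edges lets a single-edge agreement extend to the whole circuit, all remaining verifications reduce to the same pattern of producing a potential circuit in $\overrightarrow{B}\cap(-\overrightarrow{B'})$ and invoking the triangulating hypothesis of $\mathcal{A}$.
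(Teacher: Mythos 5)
Your proposal is correct and follows essentially the same route as the paper: the same contradiction argument for well-definedness, recovery of $\sigma(C)$ from $\overrightarrow{B}|_e$ via Lemma~\ref{funda} for injectivity, and the same construction plus potential-circuit contradiction for surjectivity (the paper phrases the final "is $\sigma$ triangulating" step directly rather than contrapositively, but the substance is identical). The only cosmetic difference is that you deduce (2) from (1) by matroid duality, whereas the paper simply repeats the same method; both are fine.
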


\begin{proof}
We only prove (1) because the same method can be used to prove (2). 

First we check the atlas $\mathcal{A_\sigma}$ is triangulating when $\sigma$ is triangulating. Assume by contradiction that there exist distinct bases $B_1$ and $B_2$ such that $\overrightarrow{B_1}\cap(-\overrightarrow{B_2})$ has a potential circuit $\overrightarrow{C}$. Then $\overrightarrow{C}\subseteq \overrightarrow{B_1}$ and $-\overrightarrow{C}\subseteq \overrightarrow{B_2}$. By the definition of $\sigma$ being triangulating, $\overrightarrow{C}\in \sigma$ and $-\overrightarrow{C}\in \sigma$, which gives the contradiction. 

The map $\alpha$ is injective. Indeed, given two different signatures $\sigma_1$ and $\sigma_2$, there exists a signed circuit $\overrightarrow{C}$ such that $\overrightarrow{C}\in\sigma_1$ and $-\overrightarrow{C}\in\sigma_2$. By Lemma~\ref{funda}, $C$ is a fundamental circuit $C(B, e)$. Then the two externally oriented bases associated to $B$ in $\mathcal{A}_{\sigma_1}$ and in $\mathcal{A}_{\sigma_2}$ have different signs on $e$. 

The map $\alpha$ is surjective. Given a triangulating external atlas $\mathcal{A}$, we need to find a triangulating signature $\sigma$ such that $\mathcal{A}=\mathcal{A}_\sigma$. By Lemma~\ref{funda}, any circuit $C$ is a fundamental circuit $C(B, e)$. Then we define $\sigma(C)$ to be the signed circuit $\overrightarrow{C}$ in $\overrightarrow{B}\in\mathcal{A}$. This is well-defined. Indeed, if from two different bases $B_1$ and $B_2$ we get two opposite signed circuits $\overrightarrow{C}$ and $-\overrightarrow{C}$, then $\overrightarrow{C}\subseteq\overrightarrow{B_1}$ and $-\overrightarrow{C}\subseteq\overrightarrow{B_2}$. Hence $\overrightarrow{C}\subseteq\overrightarrow{B_1}\cap(-\overrightarrow{B_2})$, which contradicts $\mathcal{A}$ being triangulating. It is obvious that $\mathcal{A}=\mathcal{A}_\sigma$. It remains to show that $\sigma$ is triangulating. For any $\overrightarrow{B_1}\in\mathcal{A_\sigma}$ and any signed circuit $\overrightarrow{C}\subseteq\overrightarrow{B_1}$, we need to show $\overrightarrow{C}\in \sigma$. If $C$ is a fundamental circuit with respect to $B_1$, then it is done. Otherwise, by Lemma~\ref{funda}, $C=C(B_2,e)$ for some other basis $B_2$. Then either $\overrightarrow{C}\subseteq\overrightarrow{B_2}$ or $-\overrightarrow{C}\subseteq\overrightarrow{B_2}$. The second option is impossible because $\overrightarrow{B_1}\cap(-\overrightarrow{B_2})$ does not contain any signed circuit. Thus $\overrightarrow{C}\subseteq\overrightarrow{B_2}$ and hence $\overrightarrow{C}\in\sigma$. 
\end{proof}

\subsection{Acyclic signatures} In this subsection, we prove acyclic signatures are triangulating. This is essentially \cite[Lemma~2.10 and Lemma~2.9]{D2}. For readers' convenience, we give a proof here, which consists of two lemmas. 

Let $\overrightarrow{e}$ be an arc. We denote by $\overrightarrow{C}(B,\overrightarrow{e})$ the fundamental circuit oriented according to $\overrightarrow{e}$ when $e\notin B$, and denote by $\overrightarrow{C^*}(B,\overrightarrow{e})$ the fundamental cocircuit oriented according to $\overrightarrow{e}$ when $e\in B$. 

\begin{lemma}\label{fundamental}
Fix a basis $B$ of $\mathcal{M}$. 

(1) For any signed circuit $\overrightarrow{C}$, \[\overrightarrow{C}=\sum_{e\notin B,\overrightarrow{e}\in\overrightarrow{C}}\overrightarrow{C}(B,\overrightarrow{e}).\]

(2) For any signed cocircuit $\overrightarrow{C^*}$, \[\overrightarrow{C^*}=\sum_{e\in B,\overrightarrow{e}\in\overrightarrow{C^*}}\overrightarrow{C^*}(B,\overrightarrow{e}).\]  
\end{lemma}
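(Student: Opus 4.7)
The plan is to prove (1) by showing the candidate identity via a ``difference-is-supported-on-$B$'' argument, and then deduce (2) by the dual reasoning.

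For (1), set $\overrightarrow{D} := \overrightarrow{C} - \sum_{e \notin B,\, \overrightarrow{e}\in\overrightarrow{C}} \overrightarrow{C}(B,\overrightarrow{e})$. I will show that $\overrightarrow{D} = 0$. The main observation is that for each external edge $e \in C \setminus B$, the fundamental circuit $C(B,e)$ is the unique circuit contained in $B \cup \{e\}$, so $e$ is its only external edge; consequently, for two distinct external edges $e \neq e'$ in $C \setminus B$, the term $\overrightarrow{C}(B,\overrightarrow{e'})$ has zero entry on $e$. Thus on each external edge $e$ of $C$, the only contribution to the sum comes from $\overrightarrow{C}(B,\overrightarrow{e})$ itself, and by the definition of $\overrightarrow{C}(B,\overrightarrow{e})$ (oriented to agree with $\overrightarrow{e}$), this contribution matches $\overrightarrow{C}|_e$. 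For external edges $e \notin C$, both $\overrightarrow{C}|_e$ and every term of the sum vanish on $e$. Hence the support of $\overrightarrow{D}$ is contained in $B$.

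Now $\overrightarrow{D} \in \ker_\mathbb{R}(M)$, being a linear combination of signed circuits (each of which lies in $\ker_\mathbb{R}(M)$ by definition). Since the columns of $M$ indexed by $B$ are linearly independent (as $B$ is a basis of $\mathcal{M}$), the only vector in $\ker_\mathbb{R}(M)$ whose support is contained in $B$ is the zero vector. Therefore $\overrightarrow{D} = 0$, which is the desired identity.

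Part (2) is the exact dual statement. I would repeat the argument with the matrix $M^*$ in place of $M$, using that the signed cocircuits of $\mathcal{M}$ are the signed circuits of $\mathcal{M}^*$ (see Section~\ref{regular}), and that for $e \in B$ the fundamental cocircuit $C^*(B,e)$ is the unique cocircuit in $(E \setminus B) \cup \{e\}$, so it contains no internal edge other than $e$. Then the internal complement of $B$ with respect to $\mathcal{M}^*$ is a basis of $\mathcal{M}^*$, and the same linear-independence argument in $\im_\mathbb{R}(M^T) = \ker_\mathbb{R}(M^*)$ forces the analogous difference to vanish. I do not foresee a real obstacle here: the only subtle point is keeping the signs straight, which is handled automatically because $\overrightarrow{C}(B,\overrightarrow{e})$ and $\overrightarrow{C^*}(B,\overrightarrow{e})$ are by definition oriented so as to carry $\overrightarrow{e}$ itself on the distinguished edge $e$.
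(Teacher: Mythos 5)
Your argument is correct and is essentially the paper's proof: the paper expands $\overrightarrow{C}$ in the basis of signed fundamental circuits of $\ker_\mathbb{R}(M)$ and reads off the coefficients at the external edges (each of which lies in exactly one fundamental circuit), while you verify the same identity by showing the difference is a kernel vector supported on $B$ and hence zero. These are two formulations of the same linear-algebra step (yours invokes only the linear independence of the columns indexed by $B$ rather than the spanning property of the fundamental circuits), and your dualization for part (2) matches the paper's remark that the same method works there.
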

\begin{proof}
We only prove (1) since the method works for (2). 

Note that the set of signed fundamental circuits with respect to $B$ form a basis of $\ker_\mathbb{R}(M)$ (choose an arbitrary orientation for each circuit) \cite{SW}. Hence we can write $\overrightarrow{C}\in\ker_\mathbb{R}(M)$ as a linear combination of these fundamental circuits with real coefficients:
\[\overrightarrow{C}=\sum_{e\notin B}k_e\overrightarrow{C}(B,\overrightarrow{e}).\]
By comparing the coefficients of $e\notin B$ in both sides, we get the desired formula. 
\end{proof}

\begin{lemma}\label{acyclic-tri}
Let $\sigma$ be an acyclic circuit signature and $\sigma^*$ be an 
acyclic cocircuit signature. Then $\sigma$ and $\sigma^*$ are triangulating. (Equivalently, $\mathcal{A}_\sigma$ and $\mathcal{A}^*_{\sigma^*}$ are triangulating atlases.) 
\end{lemma}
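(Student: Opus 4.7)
The plan is to show that any signed circuit $\overrightarrow{C}\subseteq\overrightarrow{B}$ for $\overrightarrow{B}\in\mathcal{A}_\sigma$ can be written as a positive sum of signed fundamental circuits that all lie in $\sigma$, and then to rule out the opposite orientation $-\overrightarrow{C}\in\sigma$ by a direct appeal to acyclicity. A dual argument handles the cocircuit case.

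Concretely, I would fix a basis $B$, an externally oriented basis $\overrightarrow{B}\in\mathcal{A}_\sigma$, and a signed circuit $\overrightarrow{C}\subseteq\overrightarrow{B}$. The first step is to identify the fundamental-circuit expansion of $\overrightarrow{C}$ with an expansion inside $\sigma$: for each external edge $e\in C$ with $\overrightarrow{e}\in\overrightarrow{C}$, the arc $\overrightarrow{e}$ lies in $\overrightarrow{B}$, so by the construction of $\mathcal{A}_\sigma$ (the external arc of $\overrightarrow{B}$ at $e$ is oriented as in $\sigma(C(B,e))$), we have $\overrightarrow{C}(B,\overrightarrow{e})=\sigma(C(B,e))$. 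Applying Lemma~\ref{fundamental}(1) then gives
$$\overrightarrow{C}=\sum_{e\notin B,\,\overrightarrow{e}\in\overrightarrow{C}}\sigma(C(B,e)).$$

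Next I would assume for contradiction that $\overrightarrow{C}\notin\sigma$, i.e.\ $\sigma(C)=-\overrightarrow{C}$. Rearranging the display produces
$$\sigma(C)+\sum_{e\notin B,\,\overrightarrow{e}\in\overrightarrow{C}}\sigma(C(B,e))=0$$
in $\mathbb{R}^E$, a positive-coefficient linear combination of elements of $\sigma$ equal to zero. A short case split finishes: if $C$ contains exactly one external edge $e_0$, then $C=C(B,e_0)$ and the identification $\overrightarrow{C}(B,\overrightarrow{e_0})=\sigma(C(B,e_0))$ already forces $\sigma(C)=\overrightarrow{C}$, contradicting $\sigma(C)=-\overrightarrow{C}$; if instead $C$ contains at least two external edges, then $C$ is not any of the fundamental circuits $C(B,e)$ (which each contain a single external edge), while the fundamental circuits $C(B,e)$ in the sum are pairwise distinct, so the displayed equation is a nontrivial relation among distinct signed circuits in $\sigma$ with positive coefficients, contradicting acyclicity of $\sigma$.

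The cocircuit statement for $\sigma^*$ is obtained by the obvious dualization: replace $\mathcal{A}_\sigma$, Lemma~\ref{fundamental}(1), fundamental circuits, and ``external edge'' with $\mathcal{A}^*_{\sigma^*}$, Lemma~\ref{fundamental}(2), fundamental cocircuits, and ``internal edge,'' respectively, and run the same argument. The only real bookkeeping issue is the case split ensuring that $C$ is distinct from the $C(B,e)$ appearing in the sum so that acyclicity applies, and this is precisely the mild obstacle addressed by separating the fundamental-circuit case from the generic one.
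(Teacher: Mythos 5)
Your proposal is correct and follows essentially the same route as the paper: decompose $\overrightarrow{C}$ via Lemma~\ref{fundamental} into signed fundamental circuits, note that $\overrightarrow{C}\subseteq\overrightarrow{B}$ forces each $\overrightarrow{C}(B,\overrightarrow{e})=\sigma(C(B,e))\in\sigma$, and invoke acyclicity. The paper compresses your final contradiction (moving $\sigma(C)=-\overrightarrow{C}$ to the other side to get a vanishing nonnegative combination) into the single sentence ``by the definition of $\sigma$ being acyclic, we have $\overrightarrow{C}\in\sigma$''; your explicit case split merely fills in that step.
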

\begin{proof}
We only give the proof for $\sigma$. 
By definition, for any $\overrightarrow{B}\in\mathcal{A_\sigma}$ and any signed circuit $\overrightarrow{C}\subseteq\overrightarrow{B}$, we need to show $\overrightarrow{C}\in \sigma$. 

By Lemma~\ref{fundamental}, \[\overrightarrow{C}=\sum_{e\notin B,\overrightarrow{e}\in\overrightarrow{C}}\overrightarrow{C}(B,\overrightarrow{e}).\]

Since $\overrightarrow{B}\in\mathcal{A_\sigma}$, every signed circuit in the right-hand side is in $\sigma$. By the definition of $\sigma$ being acyclic, we have $\overrightarrow{C}\in\sigma$. So, $\sigma$ is triangulating.
\end{proof}

There exists a triangulating circuit signature that is not acyclic. See Section~\ref{nonexample} for an example together with a nice description of the triangulating \emph{cycle} signatures of \emph{graphs}. 

\subsection{The BBY bijection and compatible orientations} 
Given a pair $(\sigma,\sigma^*)$ of triangulating signatures, we write \[\text{BBY}_{\sigma,\sigma^*}=f_{\mathcal{A_\sigma},\mathcal{A^*_{\sigma^*}}}\text{ and }\varphi_{\sigma,\sigma^*}=\varphi_{\mathcal{A_\sigma},\mathcal{A^*_{\sigma^*}}}.\]
They are exactly the BBY bijection in \cite{BBY} and the extended BBY bijection in \cite{D2} when the two signatures are acyclic. By the results in the previous two subsections, we may apply Theorem~\ref{main1} and Theorem~\ref{main2} to these two maps and hence generalize the counterpart results in \cite{BBY} and \cite{D2}. 
Compared with atlases, signatures allow us to talk about compatible orientations; see Section~\ref{sign} for the definition. The maps $\text{BBY}_{\sigma,\sigma^*}$ and $\varphi_{\sigma,\sigma^*}$ are proved to be bijective onto compatible orientations in addition to orientation classes in \cite{BBY,D2}. Here is an example.  
\begin{theorem}\label{BBYTh}\cite[Theorem 1.3.1]{BBY}
Suppose $\sigma$ and $\sigma^*$ are \emph{acyclic} signatures of $\mathcal{M}$.
\begin{enumerate}
    \item The map $\text{BBY}_{\sigma,\sigma^*}$ is a bijection between the bases of $\mathcal{M}$ and the $(\sigma,\sigma^*)$-compatible orientations of $\mathcal{M}$. 
    \item The set of $(\sigma, \sigma^*)$-compatible orientations is a representative set of the circuit-cocircuit reversal classes of $\mathcal{M}$.
\end{enumerate}
\end{theorem}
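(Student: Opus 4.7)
The plan is to deduce Theorem~\ref{BBYTh} from the general framework, using the bijection $\alpha$ from Theorem~\ref{trig} to pass between signatures and atlases, together with Theorem~\ref{main1}. First I would invoke Lemma~\ref{acyclic-tri} to conclude that the acyclic signatures $\sigma$ and $\sigma^*$ are triangulating, so that $\mathcal{A}_\sigma$ and $\mathcal{A}^*_{\sigma^*}$ are triangulating atlases (and in particular dissecting). Theorem~\ref{main1} then immediately tells me that the map $B \mapsto [\mathrm{BBY}_{\sigma,\sigma^*}(B)]$ is a bijection from bases of $\mathcal{M}$ onto circuit-cocircuit reversal classes. The remaining content of (1) and (2) is to identify the images $\mathrm{BBY}_{\sigma,\sigma^*}(B)$ with $(\sigma,\sigma^*)$-compatible orientations, and to show that each reversal class contains exactly one such orientation.

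The next step is to verify that $\mathrm{BBY}_{\sigma,\sigma^*}(B) = \overrightarrow{B} \cap \overrightarrow{B^*}$ is always $(\sigma,\sigma^*)$-compatible. Let $\overrightarrow{C}$ be a signed circuit contained in this orientation. Since $\overrightarrow{B} \cap \overrightarrow{B^*} \subseteq \overrightarrow{B}$, we have $\overrightarrow{C} \subseteq \overrightarrow{B}$, and because $\sigma$ is triangulating (with $\mathcal{A}_\sigma$ the associated atlas), the definition forces $\overrightarrow{C} \in \sigma$. The analogous argument with $\overrightarrow{B^*}$ and $\sigma^*$ shows that every signed cocircuit of $\mathrm{BBY}_{\sigma,\sigma^*}(B)$ lies in $\sigma^*$. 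Hence $\mathrm{BBY}_{\sigma,\sigma^*}(B)$ is $(\sigma,\sigma^*)$-compatible.

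The key remaining step is to show that within any circuit-cocircuit reversal class there is at most one $(\sigma,\sigma^*)$-compatible orientation. Suppose $\overrightarrow{O_1}$ and $\overrightarrow{O_2}$ are two $(\sigma,\sigma^*)$-compatible orientations in the same class. By Lemma~\ref{orientation1}, $\overrightarrow{O_2}$ is obtained from $\overrightarrow{O_1}$ by reversing disjoint signed circuits $\{\overrightarrow{C_i}\}_{i\in I}$ and disjoint signed cocircuits $\{\overrightarrow{C_j^*}\}_{j\in J}$. For each $i$, both $\overrightarrow{C_i} \subseteq \overrightarrow{O_1}$ and $-\overrightarrow{C_i} \subseteq \overrightarrow{O_2}$ would force both $\overrightarrow{C_i}$ and $-\overrightarrow{C_i}$ to lie in $\sigma$, contradicting that $\sigma$ assigns a single orientation to each circuit. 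Hence $I = \emptyset$, and dually $J = \emptyset$, so $\overrightarrow{O_1} = \overrightarrow{O_2}$. Combined with the fact that $\mathrm{BBY}_{\sigma,\sigma^*}(B)$ is a $(\sigma,\sigma^*)$-compatible representative of every class (as $B$ ranges over bases), this simultaneously proves that the set of $(\sigma,\sigma^*)$-compatible orientations is a representative set for the circuit-cocircuit reversal classes (statement (2)), and that $\mathrm{BBY}_{\sigma,\sigma^*}$ is a bijection from bases onto $(\sigma,\sigma^*)$-compatible orientations (statement (1)).

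I do not expect a serious obstacle: the only genuinely new observation beyond the general framework is the signature-theoretic rigidity argument in the last step, which is essentially the fact that a signature picks one direction per circuit. Everything else is an application of results already proved, principally Theorem~\ref{main1}, Lemma~\ref{acyclic-tri}, the definition of triangulating signatures, and Lemma~\ref{orientation1}.
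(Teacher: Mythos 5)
Your proposal is correct and follows essentially the same route as the paper: the paper reduces Theorem~\ref{BBYTh} to the triangulating case via Lemma~\ref{acyclic-tri}, and then proves Theorem~\ref{tri-theorem} from exactly the three facts you use (Theorem~\ref{main1} for representativity of the image, Lemma~\ref{compatibleimage} for compatibility of $\mathrm{BBY}_{\sigma,\sigma^*}(B)$, and Lemma~\ref{orientation1} for uniqueness of the compatible orientation in each class). Your spelled-out rigidity argument in the last step is precisely the content the paper leaves implicit in its third bullet.
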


We will also generalize these results by reformulating Theorem~\ref{main1} and Theorem~\ref{main2} in terms of signatures and compatible orientations. We first prove a lemma. 

\begin{lemma}\label{compatibleimage}
Suppose $\sigma$ and $\sigma^*$ are \emph{triangulating} signatures of $\mathcal{M}$. Then for any basis $B$, the orientation $\text{BBY}_{\sigma,\sigma^*}(B)$ is $(\sigma, \sigma^*)$-compatible.
\end{lemma}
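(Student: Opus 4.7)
The plan is to unpack the definitions and show that both $\sigma$-compatibility and $\sigma^*$-compatibility follow immediately from what it means for the signatures (equivalently, the atlases) to be triangulating. Set $\overrightarrow{O} = \text{BBY}_{\sigma,\sigma^*}(B) = \overrightarrow{B} \cap \overrightarrow{B^*}$, where $\overrightarrow{B} \in \mathcal{A}_\sigma$ and $\overrightarrow{B^*} \in \mathcal{A}^*_{\sigma^*}$. Because intersection of subsets of arcs is taken literally, we trivially have $\overrightarrow{O} \subseteq \overrightarrow{B}$ and $\overrightarrow{O} \subseteq \overrightarrow{B^*}$.

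For $\sigma$-compatibility: let $\overrightarrow{C}$ be any signed circuit with $\overrightarrow{C} \subseteq \overrightarrow{O}$. Then $\overrightarrow{C} \subseteq \overrightarrow{B}$, and since $\sigma$ is triangulating (equivalently, $\mathcal{A}_\sigma$ is a triangulating atlas under the bijection $\alpha$ of Theorem~\ref{trig-intro}), by the very definition of triangulating circuit signature we conclude $\overrightarrow{C} \in \sigma$. Dually, for $\sigma^*$-compatibility: let $\overrightarrow{C^*}$ be any signed cocircuit with $\overrightarrow{C^*} \subseteq \overrightarrow{O}$. Then $\overrightarrow{C^*} \subseteq \overrightarrow{B^*}$, and the triangulating hypothesis on $\sigma^*$ gives $\overrightarrow{C^*} \in \sigma^*$.

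There is essentially no obstacle here: the lemma is really just a translation, saying that the triangulating condition on a signature is exactly the condition ensuring that every signed circuit (resp.\ cocircuit) sitting inside the relevant oriented basis of the atlas belongs to the signature. The only thing to be careful about is confirming that ``signed circuit in the orientation $\overrightarrow{O}$'' (in the sense of $\sigma$-compatibility from Definition~\ref{def3}) coincides with ``$\overrightarrow{C} \subseteq \overrightarrow{O}$ as sets of arcs,'' which is built into the conventions set up in Section~\ref{regular}. Once that identification is made, both verifications are a single inclusion each.
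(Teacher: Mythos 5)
Your proposal is correct and follows essentially the same route as the paper's own proof: both observe that $\overrightarrow{B}\cap\overrightarrow{B^*}\subseteq\overrightarrow{B}$ (resp.\ $\subseteq\overrightarrow{B^*}$) and then invoke the definition of a triangulating circuit (resp.\ cocircuit) signature directly. No gap.
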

\begin{proof}
For any signed circuit $\overrightarrow{C}\subseteq\text{BBY}_{\sigma,\sigma^*}(B)=\overrightarrow{B}\cap\overrightarrow{B^*}$, where $\overrightarrow{B}\in\mathcal{A_\sigma}$ and $\overrightarrow{B^*}\in\mathcal{A^*_{\sigma^*}}$, we have $\overrightarrow{C}\subseteq\overrightarrow{B}$, and hence $\overrightarrow{C}$ is in the signature $\sigma$ because $\sigma$ is triangulating. Similarly, for any signed cocircuit in the orientation $\text{BBY}_{\sigma,\sigma^*}(B)$, it is in $\sigma^*$. So, the orientation $\text{BBY}_{\sigma,\sigma^*}(B)$ is $(\sigma, \sigma^*)$-compatible. 
\end{proof}

Now we generalize Theorem~\ref{BBYTh}. 
\begin{theorem}\label{tri-theorem}
Suppose $\sigma$ and $\sigma^*$ are \emph{triangulating} signatures of $\mathcal{M}$. \begin{enumerate}
    \item (Theorem~\ref{main1-sign}) The map $\text{BBY}_{\sigma,\sigma^*}$ is a bijection between the bases of $\mathcal{M}$ and the $(\sigma,\sigma^*)$-compatible orientations of $\mathcal{M}$. 
    \item (Proposition~\ref{prop: triangulating sign rep}(1)) The set of $(\sigma, \sigma^*)$-compatible orientations is a representative set of the circuit-cocircuit reversal classes of $\mathcal{M}$.
\end{enumerate}
\end{theorem}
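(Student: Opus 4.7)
The plan is to reduce Theorem~\ref{tri-theorem} to the atlas-level result Theorem~\ref{main1}, using Lemma~\ref{compatibleimage} to identify the image of $\text{BBY}_{\sigma,\sigma^*}$ with the set of $(\sigma,\sigma^*)$-compatible orientations. Since $\sigma$ and $\sigma^*$ are triangulating, Theorem~\ref{trig} tells us that $\mathcal{A}_\sigma$ and $\mathcal{A}^*_{\sigma^*}$ are triangulating atlases, and (by the Remark after Definition~\ref{key def}) also dissecting. So Theorem~\ref{main1} gives a bijection
\[
\overline{\text{BBY}}_{\sigma,\sigma^*}:\{\text{bases of }\mathcal{M}\}\to\{\text{circuit-cocircuit reversal classes}\},\quad B\mapsto[\text{BBY}_{\sigma,\sigma^*}(B)].
\]

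The technical heart of the argument is the following uniqueness statement: each circuit-cocircuit reversal class contains \emph{at most} one $(\sigma,\sigma^*)$-compatible orientation. Suppose $\overrightarrow{O_1}$ and $\overrightarrow{O_2}$ are two $(\sigma,\sigma^*)$-compatible orientations in the same class. By Lemma~\ref{orientation1}, one can be obtained from the other by simultaneously reversing a collection of disjoint signed circuits $\{\overrightarrow{C_i}\}_{i\in I}$ and signed cocircuits $\{\overrightarrow{C_j^*}\}_{j\in J}$ in $\overrightarrow{O_1}$. For each $i\in I$, $\overrightarrow{C_i}\subseteq\overrightarrow{O_1}$ forces $\overrightarrow{C_i}\in\sigma$ by $\sigma$-compatibility of $\overrightarrow{O_1}$, while $-\overrightarrow{C_i}\subseteq\overrightarrow{O_2}$ forces $-\overrightarrow{C_i}\in\sigma$ by $\sigma$-compatibility of $\overrightarrow{O_2}$; this contradicts that $\sigma$ chooses a single direction for each circuit, unless $I=\emptyset$. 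The dual argument using $\sigma^*$ yields $J=\emptyset$, so $\overrightarrow{O_1}=\overrightarrow{O_2}$.

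Combined with Lemma~\ref{compatibleimage}, which guarantees that each class $[\text{BBY}_{\sigma,\sigma^*}(B)]$ contains at least one $(\sigma,\sigma^*)$-compatible orientation, this gives (2): the map sending a class to its unique compatible representative is a well-defined bijection onto $(\sigma,\sigma^*)$-compatible orientations. For (1), Lemma~\ref{compatibleimage} says the image of $\text{BBY}_{\sigma,\sigma^*}$ consists of $(\sigma,\sigma^*)$-compatible orientations. Injectivity of $\text{BBY}_{\sigma,\sigma^*}$ on bases follows from the injectivity in Theorem~\ref{main1} (distinct bases go to distinct classes, hence to distinct orientations). Surjectivity onto $(\sigma,\sigma^*)$-compatible orientations follows because every such orientation is the unique compatible representative of some class $[\text{BBY}_{\sigma,\sigma^*}(B)]$ by (2), and therefore equals $\text{BBY}_{\sigma,\sigma^*}(B)$ itself.

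I do not expect a serious obstacle here: the work has already been done in Theorem~\ref{main1} and Theorem~\ref{trig}, and the only genuinely new input is the short uniqueness argument above. The one place that requires care is invoking Lemma~\ref{orientation1} correctly (one needs a disjoint family of circuits \emph{and} cocircuits, rather than a sequence of reversals), but once that form is in hand, the contradiction with $\sigma$ selecting a single orientation per circuit is immediate.
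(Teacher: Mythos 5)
Your proposal is correct and follows essentially the same route as the paper: the author also deduces the theorem from Theorem~\ref{main1} (the image of $\text{BBY}_{\sigma,\sigma^*}$ is a representative set of the classes), Lemma~\ref{compatibleimage} (the image lies in the compatible orientations), and the observation via Lemma~\ref{orientation1} that each class contains at most one $(\sigma,\sigma^*)$-compatible orientation. Your only addition is to spell out the short uniqueness argument that the paper leaves implicit, and you do so correctly.
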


\begin{proof}
It is a direct consequence of the following three facts. \begin{itemize}
    \item By Theorem~\ref{main1}, the image of $\text{BBY}_{\sigma,\sigma^*}$ forms a representative set of the circuit-cocircuit reversal classes.
    \item By Lemma~\ref{compatibleimage}, the image of $\text{BBY}_{\sigma,\sigma^*}$ is contained in the set of $(\sigma, \sigma^*)$-compatible orientations.
    \item By Lemma~\ref{orientation1}, each circuit-cocircuit reversal class contains at most one $(\sigma, \sigma^*)$-compatible orientation. 
\end{itemize}
\end{proof}

Theorem~5.2 in \cite{D2} says that the following result on the extended BBY bijection holds for acyclic signatures. Now we prove that it holds for triangulating signatures. 
\begin{theorem}[Theorem~\ref{main2-sign}]\label{tri-theorem2}

Suppose $\sigma$ and $\sigma^*$ are \emph{triangulating} signatures of a regular matroid $\mathcal{M}$ with ground set $E$.  

(1) The map 
\begin{align*}
\varphi_{\sigma,\sigma^*}:\{\text{orientations of }\mathcal{M}\} & \to \{\text{subsets of } E\} \\
\overrightarrow{O} & \mapsto (\text{BBY}_{\sigma,\sigma^*}^{-1}(\overrightarrow{O^{cp}})\cup \biguplus_{i\in I}C_i)\backslash \biguplus_{j\in J}C_j^*
\end{align*}
is a bijection, where $\overrightarrow{O^{cp}}$ is  the (unique) ($\sigma,\sigma^*$)-compatible orientation obtained by reversing disjoint signed circuits $\{\overrightarrow{C_i}\}_{i\in I}$ and signed cocircuits $\{\overrightarrow{C_j^*}\}_{j\in J}$ in $\overrightarrow{O}$.

(2) The map $\varphi_{\sigma,\sigma^*}$ specializes to the bijection
\begin{align*}
\varphi_{\sigma,\sigma^*}: \{\sigma\text{-compatible orientations}\} & \to \{\text{independent sets}\} \\
\overrightarrow{O} & \mapsto \text{BBY}_{\sigma,\sigma^*}^{-1}(\overrightarrow{O^{cp}})\backslash \biguplus_{j\in J}C_j^*.
\end{align*}

(3) The map $\varphi_{\sigma,\sigma^*}$ specializes to the bijection
\begin{align*}
\varphi_{\sigma,\sigma^*}:\{\sigma^*\text{-compatible orientations}\} & \to \{\text{spanning sets}\} \\
\overrightarrow{O} & \mapsto \text{BBY}_{\sigma,\sigma^*}^{-1}(\overrightarrow{O^{cp}})\cup \biguplus_{i\in I}C_i.
\end{align*}
\end{theorem}

\begin{proof}
(1) This is a direct consequence of Theorem~\ref{main2}(1) and Theorem~\ref{tri-theorem}. 

(2) Let $A=\varphi_{\sigma,\sigma^*}(\overrightarrow{O})$. Then $A$ is an independent set $\Leftrightarrow$ $I=\emptyset$ (by Lemma~\ref{orientation1}) $\Leftrightarrow$ $\overrightarrow{O}$ is $\sigma$-compatible.

(3) The proof is similar to the one of (2). 
\end{proof}

The following result is a direct consequence of Theorem~\ref{main2} and Theorem~\ref{tri-theorem2}. 

\begin{corollary}[Proposition~\ref{prop: triangulating sign rep}(2)(3)]
For a \emph{triangulating} signature $\sigma$ (resp. $\sigma^*$), the $\sigma$-compatible (resp. $\sigma^*$-compatible) orientations form representatives for circuit reversal classes (resp. cocircuit reversal classes). 
\end{corollary}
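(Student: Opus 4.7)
The plan is to derive this corollary by pairing the given triangulating signature with a companion triangulating signature of the opposite type, and then chaining together the bijection from Theorem~\ref{main2-sign}(2) with the representative statement from Theorem~\ref{main2}(2). I will only spell out the $\sigma$ case; the $\sigma^*$ case is identical after swapping the roles of circuits and cocircuits.

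First I would fix an auxiliary triangulating cocircuit signature $\sigma^*$. Such a signature always exists: acyclic cocircuit signatures of $\mathcal{M}$ exist (one obtains them from a generic linear functional as in \cite{BBY}), and by Lemma~\ref{acyclic-tri} every acyclic cocircuit signature is triangulating. With $\sigma^*$ chosen, the pair $(\sigma,\sigma^*)$ satisfies the hypotheses of Theorem~\ref{tri-theorem2} (equivalently, the associated atlases $\mathcal{A}_\sigma$ and $\mathcal{A}^*_{\sigma^*}$ are triangulating by Theorem~\ref{trig}, so both atlases are in particular dissecting by Remark following Definition~\ref{key def}).

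Next I would apply Theorem~\ref{tri-theorem2}(2), which identifies the $\sigma$-compatible orientations with the independent sets of $\mathcal{M}$ under the bijection $\varphi_{\sigma,\sigma^*} = \varphi_{\mathcal{A}_\sigma,\mathcal{A}^*_{\sigma^*}}$. Equivalently, $\varphi^{-1}_{\mathcal{A}_\sigma,\mathcal{A}^*_{\sigma^*}}$ sends the independent sets of $\mathcal{M}$ bijectively onto the set of $\sigma$-compatible orientations. Now I invoke Theorem~\ref{main2}(2) (proved as Proposition~\ref{main2(2)(3)}): the image of the independent sets under $\varphi^{-1}_{\mathcal{A}_\sigma,\mathcal{A}^*_{\sigma^*}}$ is a set of representatives for the circuit reversal classes of $\mathcal{M}$. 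Combining the two identifications, the $\sigma$-compatible orientations form a representative set of the circuit reversal classes, which is exactly what we want.

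The only real subtlety, and hence the main obstacle, is making sure an auxiliary triangulating $\sigma^*$ is available and that the conclusion is independent of its choice. Availability follows from the standard existence of acyclic signatures together with Lemma~\ref{acyclic-tri}. Independence of the choice is automatic from the statement: being a representative set of circuit reversal classes is a property intrinsic to $\sigma$, so although the bijection $\varphi_{\sigma,\sigma^*}$ used in the argument depends on $\sigma^*$, the final conclusion does not. The dual assertion for a triangulating cocircuit signature $\sigma^*$ is handled by the symmetric argument, using Theorem~\ref{tri-theorem2}(3) in place of (2) and Theorem~\ref{main2}(3) in place of (2).
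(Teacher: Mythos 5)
Your proof is correct and follows essentially the same route as the paper, which derives the corollary as a direct consequence of Theorem~\ref{main2}(2)(3) combined with Theorem~\ref{tri-theorem2}(2)(3). The one point you make explicit that the paper leaves implicit is the need to fix an auxiliary triangulating signature of the opposite type (available via acyclic signatures and Lemma~\ref{acyclic-tri}) and the observation that the conclusion is independent of that choice; this is a welcome clarification but not a different argument.
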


\begin{Rem}
We can further generalize Theorem~\ref{tri-theorem2}(2)(3) a bit. From the proof of Theorem~\ref{tri-theorem2}(2) (including the preceding lemmas), it is clear that if $\sigma$ is a triangulating signature and $\mathcal{A^*}$ is a dissecting atlas, then $\varphi_{\mathcal{A_\sigma},\mathcal{A^*}}$ specializes to a bijection between $\{\sigma\text{-compatible orientations}\}$ and $\{\text{independent sets}\}$. The dual statement also holds. 
\end{Rem}

So far, we have proved every claim in Section~\ref{sign} except Theorem~\ref{triangular}, which we will prove next.

\subsection{Triangulating cycle signatures of graphs}\label{nonexample}

For graphs, we have a nice description (Theorem~\ref{triangular}) for the triangulating cycle signatures. We will prove the result and use it to check an example where a triangulating cycle signature is not acyclic. 

Let $G$ be a graph where multiple edges are allowed (loops are of no interest here). By cycles of $G$, we mean simple cycles. When we add cycles, we view them as vectors in $\mathbb{Z}^E$. 

We start with a basic lemma. We cannot find a reference, so we prove it briefly. 

\begin{lemma}\label{theta}
If the sum of two directed cycles $\overrightarrow{C_1}$ and $\overrightarrow{C_2}$ of $G$ is a directed cycle, then their common edges $C_1\cap C_2$ form a path (which is directed in opposite ways in the two directed cycles). 
\end{lemma}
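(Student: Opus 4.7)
The plan is to handle the parenthetical assertion first and then the structural claim.

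\emph{Step 1 (orientations on shared edges).} For any edge $e \in C_1 \cap C_2$, the $e$-entries of $\overrightarrow{C_1}$ and $\overrightarrow{C_2}$ are each $\pm 1$, so for their sum to land in $\{0,\pm 1\}^E$ (as it must, being a signed circuit), these two entries have to cancel. Hence every shared edge is oriented oppositely in $\overrightarrow{C_1}$ and $\overrightarrow{C_2}$, which is the parenthetical part of the conclusion; as a byproduct, the support of $\overrightarrow{C_1}+\overrightarrow{C_2}$ is exactly the symmetric difference $C_1 \triangle C_2$.

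\emph{Step 2 (structure of $C_1\cap C_2$).} Since $C_1 \cap C_2$ is a subgraph of the simple cycle $C_1$, each vertex has degree at most $2$ in it, and any cycle contained in $C_1 \cap C_2$ would have to equal $C_1$. But $\overrightarrow{C_1}+\overrightarrow{C_2} \neq 0$ forces $C_1 \neq C_2$, ruling out $C_1 \subseteq C_1 \cap C_2$. Therefore $C_1 \cap C_2$ is a disjoint union of paths, say $P_1, \dots, P_k$ with $k \geq 0$; it remains to show that $k \leq 1$.

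\emph{Step 3 (single component).} The degree analysis shows that at each endpoint of some $P_i$, exactly one incident edge lies in $C_1 \setminus C_2$ and one in $C_2 \setminus C_1$, while the interior vertices of the $P_i$'s have both their incident $C_j$-edges shared and so are not visited by $\overrightarrow{C_3}:=\overrightarrow{C_1}+\overrightarrow{C_2}$. As one walks along the simple cycle $\overrightarrow{C_3}$, switches between a block of arcs coming from $\overrightarrow{C_1}$ and a block of arcs coming from $\overrightarrow{C_2}$ occur precisely at these endpoints. I would then encode the cyclic orders of $P_1, \dots, P_k$ along $\overrightarrow{C_1}$ and along $\overrightarrow{C_2}$ as two cyclic permutations, and use the fact that $\overrightarrow{C_3}$ visits each vertex at most once to force $k=1$.

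The main obstacle, as I see it, is this last vertex-disjointness argument: parity and local degree considerations alone do not exclude $k \geq 2$, so the proof must genuinely exploit that $\overrightarrow{C_3}$ is a \emph{simple} directed cycle (and not merely a single closed directed walk) in $G$.
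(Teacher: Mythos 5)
Your Steps 1 and 2 are correct and are the natural opening moves: since a signed circuit is a $\{0,\pm1\}$-vector, every edge of $C_1\cap C_2$ must carry opposite orientations in $\overrightarrow{C_1}$ and $\overrightarrow{C_2}$, the support of the sum is $C_1\triangle C_2$, and $C_1\cap C_2$ is a disjoint union of paths $P_1,\dots,P_k$ (you should also dispose of $k=0$, which is easy: edge-disjoint cycles sharing a vertex give that vertex degree $4$ in the sum, and vertex-disjoint ones give a disconnected support). The genuine gap is Step 3, and you say so yourself: the "two cyclic permutations'' bookkeeping is announced but never carried out, so no argument is actually given that $k\le 1$. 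As it stands the proposal proves only that $C_1\cap C_2$ is a disjoint union of paths, which is strictly weaker than the lemma.

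The more serious problem is that this gap cannot be closed, because the statement is false as written. Take $G=K_{3,3}$ with parts $\{u_1,u_2,u_3\}$ and $\{w_1,w_2,w_3\}$, and the directed hexagons $\overrightarrow{C_1}\colon u_1\to w_1\to u_2\to w_2\to u_3\to w_3\to u_1$ and $\overrightarrow{C_2}\colon w_1\to u_1\to w_2\to u_2\to w_3\to u_3\to w_1$. Their common edges are the perfect matching $\{u_1w_1,\,u_2w_2,\,u_3w_3\}$, each oppositely oriented, so these cancel and $\overrightarrow{C_1}+\overrightarrow{C_2}$ is the directed hexagon $u_1\to w_2\to u_3\to w_1\to u_2\to w_3\to u_1$; the hypothesis holds, yet $C_1\cap C_2$ has three components. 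This is consistent with your own permutation scheme: the closed-walk decomposition of the sum is governed by a product of two $k$-cycles, which is forced to split when $k$ is even but can again be a single $k$-cycle when $k$ is odd, and $k=3$ is realizable. Note that the paper's proof founders on exactly the same point: after the chip leaves an endpoint of a maximal shared path along $\overrightarrow{C_1}$ and first meets $C_2$ at the "wrong'' vertex, the return route along $\overrightarrow{C_2}$ may traverse common edges (in the example it is $w_1\to u_2\to w_3\to u_3\to w_1$, which uses the cancelled edge $u_3w_3$), so the resulting closed walk need not be contained in the support of $\overrightarrow{C_1}+\overrightarrow{C_2}$ and the claimed contradiction does not follow. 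Since Lemma~\ref{theta} is invoked in the proof of the implication $(1)\Rightarrow(3)$ of Theorem~\ref{tri-sign}, this issue propagates there and deserves a separate fix (e.g., an added hypothesis such as planarity, or a different route to that implication).
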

\begin{proof}
Clearly $C_1\cap C_2$ contains a path. Take a maximal path and consider its two endpoints $v_1$ and $v_2$. We put a chip $c$ at $v_1$ and move $c$ along $\overrightarrow{C_1}$. Without loss of generality, we may assume $c$ leaves the path and certainly leaves $C_2$. We claim that the next place where $c$ reaches $C_2$ is $v_2$, which finishes the proof of the lemma. Indeed, if $c$ reaches a common vertex of $C_1$ and $C_2$ other than $v_2$, then we can move $c$ back to $v_1$ along $\overrightarrow{C_2}$, and hence the route of $c$ forms a directed cycle which is strictly contained in $\overrightarrow{C_1}+\overrightarrow{C_2}$. This contradicts that $\overrightarrow{C_1}+\overrightarrow{C_2}$ is a cycle. 
\end{proof}

Our target is to prove the following result. The proof needs a technical lemma which we will state and prove right after the result. Gleb Nenashev gave the proof (including the technical lemma). 
\begin{theorem}[Theorem~\ref{triangular}]\label{tri-sign}\cite{Nenashev}
Let $\sigma$ be a \emph{cycle} signature of a \emph{graph} $G$. Then the following are equivalent. 
\begin{enumerate}
    \item $\sigma$ is triangulating. 
    \item For any three directed cycles in $\sigma$, the sum is not zero.
    \item For any two directed cycles in $\sigma$, if their sum is a cycle, then the sum is in $\sigma$. 
\end{enumerate}
\end{theorem}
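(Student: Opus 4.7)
My plan is to establish the equivalences $(2)\Leftrightarrow(3)$ and $(1)\Leftrightarrow(3)$, which together yield the theorem. The equivalence $(2)\Leftrightarrow(3)$ is a linear-algebraic tautology: if $\overrightarrow{C_1}+\overrightarrow{C_2}+\overrightarrow{C_3}=0$ with all $\overrightarrow{C_i}\in\sigma$, then $\overrightarrow{C_1}+\overrightarrow{C_2}=-\overrightarrow{C_3}$ is a signed cycle, and (3) forces $-\overrightarrow{C_3}\in\sigma$, contradicting that a signature picks a unique direction per cycle; the reverse implication is identical. For $(1)\Rightarrow(3)$, given $\overrightarrow{C_1}+\overrightarrow{C_2}=\overrightarrow{C_3}$ a signed cycle with $\overrightarrow{C_1},\overrightarrow{C_2}\in\sigma$, Lemma~\ref{theta} yields that $P:=C_1\cap C_2$ is a path, so $C_1\cup C_2$ is a theta graph composed of three internally disjoint paths $P,Q_1,Q_2$ (with $Q_i:=C_i\setminus P$) between common endpoints $w_1,w_2$. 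I pick arbitrary $e_1\in Q_1$ and $e_2\in Q_2$ and extend the tree $(C_1\cup C_2)\setminus\{e_1,e_2\}$ to a spanning tree $B$ of $G$. With respect to $B$, each $C_i$ is the fundamental circuit of its unique external edge $e_i$, so in $\overrightarrow{B}\in\mathcal{A}_\sigma$ the arc at $e_i$ is oriented as in $\overrightarrow{C_i}$. Since $C_3=Q_1\cup Q_2$ has only $\{e_1,e_2\}$ as external edges (with orientations matching $\overrightarrow{B}$) and all other edges of $C_3$ are internal hence bioriented in $\overrightarrow{B}$, we conclude $\overrightarrow{C_3}\subseteq\overrightarrow{B}$, and (1) gives $\overrightarrow{C_3}\in\sigma$.

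For $(3)\Rightarrow(1)$, I take $\overrightarrow{B}\in\mathcal{A}_\sigma$ and a signed circuit $\overrightarrow{C}\subseteq\overrightarrow{B}$. Lemma~\ref{fundamental} canonically decomposes $\overrightarrow{C}=\sum_{i=1}^k\overrightarrow{C_i}$ with $\overrightarrow{C_i}=\overrightarrow{C}(B,\overrightarrow{e_i})$ ranging over the external edges $e_i\in C$, and each $\overrightarrow{C_i}\in\sigma$ by the construction of $\mathcal{A}_\sigma$. I prove $\overrightarrow{C}\in\sigma$ by induction on $k$. The case $k=1$ is immediate. For $k\geq 2$, the technical lemma (attributed to Nenashev, proved separately in the paper) asserts that some index $i$ satisfies: $\overrightarrow{C}-\overrightarrow{C_i}$ is again a signed cycle. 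Its fundamental decomposition must then be $\sum_{l\neq i}\overrightarrow{C_l}$ by the uniqueness in Lemma~\ref{fundamental}, so all its summands lie in $\sigma$, and the inductive hypothesis yields $\overrightarrow{C}-\overrightarrow{C_i}\in\sigma$. Writing $\overrightarrow{C}=\overrightarrow{C_i}+(\overrightarrow{C}-\overrightarrow{C_i})$ expresses $\overrightarrow{C}$ as a sum of two signed cycles in $\sigma$, so (3) delivers $\overrightarrow{C}\in\sigma$.

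The main obstacle is Nenashev's technical lemma itself: in any fundamental decomposition of a signed cycle with $k\geq 2$ summands, some $i$ makes $\overrightarrow{C}-\overrightarrow{C_i}$ a signed cycle. This demands simultaneously that the symmetric difference $C\triangle C_i$ form a simple cycle in $G$ and that the orientations of $\overrightarrow{C}$ and $\overrightarrow{C_i}$ agree on every edge of $C\cap C_i$ (so that no coefficient $\pm 2$ survives in $\overrightarrow{C}-\overrightarrow{C_i}$). A careful structural analysis of the spanning tree $B$ appears necessary: one might locate an extremal external edge by examining the leaves of the union of fundamental $B$-paths $\bigcup_j P_j\subseteq B$, or exploit the cyclic order of $e_1,\ldots,e_k$ around $\overrightarrow{C}$ to identify a ``peelable'' external edge whose fundamental path interacts cleanly with the rest of $C$. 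Making this combinatorial selection airtight in full generality is the most delicate step of the proof.
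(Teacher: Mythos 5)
Your handling of $(2)\Leftrightarrow(3)$ and of $(1)\Rightarrow(3)$ is correct and essentially identical to the paper's argument: using Lemma~\ref{theta} to see that $C_1\cup C_2$ is a theta graph, deleting one edge from each of $Q_1$ and $Q_2$, extending to a spanning tree $B$ so that $C_1,C_2$ are fundamental cycles and $\overrightarrow{C_3}\subseteq\overrightarrow{B}$, and then invoking the triangulating property is exactly what the paper does.

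The gap is in $(3)\Rightarrow(1)$. You attribute to the paper a ``peeling'' lemma: for $k\geq 2$ there is always a single index $i$ for which $\overrightarrow{C}-\overrightarrow{C_i}$ is a signed cycle. That is not what Lemma~\ref{Gleb} asserts. Lemma~\ref{Gleb} only produces \emph{some} complete parenthesization of $\sum_i\overrightarrow{C}(B,\overrightarrow{e_i})$ in which every intermediate sum is a directed cycle; unfolded, this is a binary merge tree whose two top-level blocks may each contain several fundamental cycles, so it is the weaker statement that $\overrightarrow{C}=X+Y$ with $X=\sum_{i\in S}\overrightarrow{C_i}$ and $Y=\sum_{i\notin S}\overrightarrow{C_i}$ both directed cycles for some subset $S$, not necessarily a singleton. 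Indeed, the paper's induction does the opposite of peeling: it uses a root/height argument to find two \emph{cyclically adjacent} external edges $e_k,e_{k+1}$ whose intervening tree path $P_k$ has maximal height, merges their two fundamental cycles first, and recurses on the shortened cycle in which this pair is replaced by a new chord $e_k'$ (using the harmless assumption that all vertex pairs are adjacent). For a fixed $i$, $\overrightarrow{C}-\overrightarrow{C_i}$ can easily fail to be a circuit: the tree path $Q_i$ may meet $C$ in edges far from $e_i$, so that $C\cap C_i$ is not a path (obstructed by Lemma~\ref{theta}), or may revisit a vertex of $C$ so that the symmetric difference is not a simple cycle; whether a good $i$ always exists is exactly the combinatorial claim you flag as ``the most delicate step,'' and you neither prove it nor can you import it from the paper, since it is strictly stronger than Lemma~\ref{Gleb}. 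Your induction would go through if you weakened the peeling step to the two-block splitting that Lemma~\ref{Gleb} actually provides and recursed on both halves --- but at that point you have reproduced the paper's proof, including its hardest ingredient, which is the part still missing from your write-up.
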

\begin{proof}
The equivalence of (2) and (3) is trivial. Without loss of generality, we may assume $G$ is connected. 

Then we prove (1) implies (3). Denote the two directed cycles by $\overrightarrow{C_1}$ and $\overrightarrow{C_2}$, and their sum by $\overrightarrow{C}$. By Lemma~\ref{theta}, $C_1\cap C_2$ is a path $P$. Hence we can get a forest from $C_1\cup C_2$ by removing one edge in $C_1\backslash P$ and one edge in $C_2\backslash P$. We extend the forest to a spanning tree $B$ of $G$. Then $C_1$ and $C_2$ are both fundamental cycles of $G$ with respect to $B$. Consider the external atlas $\overrightarrow{B}\in \mathcal{A}_\sigma$. Because $C_1, C_2\in\sigma$, we have $\overrightarrow{C_1}, \overrightarrow{C_2}\subseteq \overrightarrow{B}$, and hence $\overrightarrow{C}\subseteq\overrightarrow{B}$. Because $\sigma$ is triangulating, $\overrightarrow{C}\in\sigma$.  

The difficult part is (3) implies (1). For any $\overrightarrow{B}\in\mathcal{A_\sigma}$ and any signed circuit $\overrightarrow{C}\subseteq\overrightarrow{B}$, we want to show $\overrightarrow{C}\in\sigma$. By Lemma~\ref{Gleb}, we can write $\overrightarrow{C}$ as the sum of directed fundamental cycles with a complete parenthesization such that each time we add two directed cycles up, the sum is always a directed cycle. Because $\overrightarrow{C}\subseteq\overrightarrow{B}$, all the directed fundamental cycles in the summation are in $\sigma$. Due to (3), $\overrightarrow{C}\in\sigma$. 
\end{proof}

Let $B$ be a spanning tree of a connected graph $G$ and $\overrightarrow{C}$ be a directed cycle. Denote by $\overrightarrow{e_1},\cdots,\overrightarrow{e_m}$ the external arcs (with respect to $B$) that appear in $\overrightarrow{C}$ in order (with an arbitrary start). By Lemma~\ref{fundamental}, \[\overrightarrow{C}=\sum_{i=1}^m\overrightarrow{C}(B,\overrightarrow{e_i}).\]

\begin{lemma}\label{Gleb} \cite{Nenashev} The summation above can be completely parenthesized such that during the summation the sum of two terms is always a directed cycle.

(e.g., $\overrightarrow{C}=(\overrightarrow{C_1}+\overrightarrow{C_2})+(\overrightarrow{C_3}+(\overrightarrow{C_4}+\overrightarrow{C_5}))$ is completely parenthesized, and we hope $\overrightarrow{C_1}+\overrightarrow{C_2}$, $\overrightarrow{C_4}+\overrightarrow{C_5}$, and $\overrightarrow{C_3}+(\overrightarrow{C_4}+\overrightarrow{C_5})$ are all directed cycles.)
\end{lemma}

\begin{proof}
Without loss of generality, we may assume that any two vertices in $G$ are adjacent, because adding an edge to $G$ does not affect the result.  

We use induction on $m$. When $m\leq 2$, the statement is trivial. Assume the statement holds for some integer $m\geq 2$, and we need to show it holds for $m+1$.

Denote $\overrightarrow{C}$ by $(\overrightarrow{e_1},\overrightarrow{P_1},\overrightarrow{e_2},\overrightarrow{P_2}, \cdots,\overrightarrow{e_{m+1}}, \overrightarrow{P_{m+1}})$, where $\overrightarrow{P_i}\subseteq\overrightarrow{C}$ is the directed (internal) path connecting $\overrightarrow{e_i}$ and $\overrightarrow{e_{i+1}}$. See Figure~\ref{Gleb-fig}.

\begin{figure}[ht]
            \centering
            \includegraphics[scale=1]{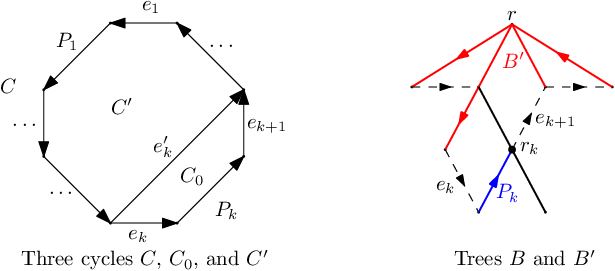}
            \caption{The pictures used in Lemma~\ref{Gleb}. In the right-hand side picture, the tree $B$ is in solid lines and the tree $B'$ is in red solid lines. The dashed lines are some external edges. The directed edges form the directed cycle $\protect\overrightarrow{C}$.}
            \label{Gleb-fig}
\end{figure}

We denote the vertices in an object by $V(\text{object})$. The set $V(\overrightarrow{P_i})$ includes the two endpoints of the path. When $\overrightarrow{P_i}$ contains no arc, we define $V(\overrightarrow{P_i})$ to be the head of $\overrightarrow{e_i}$, which is also the tail of $\overrightarrow{e_{i+1}}$.

We take a vertex $r$ of $G$ viewed as the root of the tree $B$. Define the \emph{height} of a vertex $v$ to be the number of edges in the unique path in $B$ connecting $v$ and $r$. For a (internal) path $\overrightarrow{P_i}$, there exists a unique vertex in $\overrightarrow{P_i}$ with the \emph{minimum} height. We denote the vertex by $r_i$ and define the \emph{height} of $\overrightarrow{P_i}$ to be the height of $r_i$. Let $\overrightarrow{P_k}$ be a path having the \emph{maximal} height among all $\overrightarrow{P_i}$. We remove the vertex $r_k(\neq r)$ together with the incident edges from the tree $B$ and denote the connected component containing $r$ by $B'$. Then $B'$ is a tree not containing any vertex in $\overrightarrow{P_k}$ but containing the vertices in $V(C)\backslash V(\overrightarrow{P_k})$. We will see the definition of $\overrightarrow{P_k}$ and $B'$ is crucial to our proof. 

Without loss of generality, we may assume $1<k<m+1$. Let $\overrightarrow{e_k}'$ be the arc directed from the tail of $\overrightarrow{e_k}$ to the head of $\overrightarrow{e_{k+1}}$. Denote by $\overrightarrow{C_0}$ the directed cycle $(\overrightarrow{e_k},\overrightarrow{P_k},\overrightarrow{e_{k+1}},-\overrightarrow{e_k}')$. Let $\overrightarrow{C}'=\overrightarrow{C}-\overrightarrow{C_0}$. Note that $\overrightarrow{C}'$ is the directed cycle obtained from $\overrightarrow{C}$ by replacing the path $(\overrightarrow{e_k},\overrightarrow{P_k},\overrightarrow{e_{k+1}})$ with the arc $\overrightarrow{e_k}'$. By Lemma~\ref{fundamental}, we have 

\[\overrightarrow{C}'=\sum_{i=1}^{k-1}\overrightarrow{C}(B,\overrightarrow{e_i})+\overrightarrow{C}(B,\overrightarrow{e_k}')+\sum_{i=k+2}^{m+1}\overrightarrow{C}(B,\overrightarrow{e_i}).\] 

Now we apply the induction hypothesis to $\overrightarrow{C}'$ and get a way to completely parenthesize the summation so that the parenthesization has the desired property for $\overrightarrow{C}'$. 

We rewrite $\overrightarrow{C}$ as

\begin{align*}
\overrightarrow{C}=\overrightarrow{C}'+\overrightarrow{C_0} & =  \sum_{i=1}^{k-1}\overrightarrow{C}(B,\overrightarrow{e_i})+(\overrightarrow{C}(B,\overrightarrow{e_k}')+\overrightarrow{C_0})+\sum_{i=k+2}^{m+1}\overrightarrow{C}(B,\overrightarrow{e_i}) \\
 & =  \sum_{i=1}^{k-1}\overrightarrow{C}(B,\overrightarrow{e_i})+(\overrightarrow{C}(B,\overrightarrow{e_k})+\overrightarrow{C}(B,\overrightarrow{e_{k+1}}))+\sum_{i=k+2}^{m+1}\overrightarrow{C}(B,\overrightarrow{e_i}).
\end{align*}

We completely parenthesize the summation for $\overrightarrow{C}$ in the same way as we just did for $\overrightarrow{C}'$ by adding up $(\overrightarrow{C}(B,\overrightarrow{e_k})+\overrightarrow{C}(B,\overrightarrow{e_{k+1}}))$ first and then treating it as the summand $\overrightarrow{C}(B,\overrightarrow{e_k}')$ in $\overrightarrow{C}'$. 

We claim this gives us the desired parenthesization. Indeed, for any new directed cycle $\overrightarrow{D}$ produced in the summation of $\overrightarrow{C}$, there are two cases. \begin{itemize}
    \item If $\overrightarrow{D}$ does not use $\overrightarrow{e_k}$ (and hence $\overrightarrow{e_{k+1}}$), then $\overrightarrow{D}$ also appear in the summation of $\overrightarrow{C}'$. Thus $\overrightarrow{D}$ is a directed cycle. 
    \item If $\overrightarrow{D}$ uses $\overrightarrow{e_k}$, then the corresponding term in the summation of $\overrightarrow{C}'$ is $\overrightarrow{D}'=\overrightarrow{D}-\overrightarrow{C_0}$, where $\overrightarrow{D}'$ could be $\overrightarrow{C}(B,\overrightarrow{e_k}')$ or a newly produced directed cycle containing $\overrightarrow{e_k}'$. Note that the endpoints of all the external edges in $\overrightarrow{C}'$ are in $B'$. So all the fundamental cycles in the summation of $\overrightarrow{C}'$ only use vertices in $B'$, and hence $\overrightarrow{D}'$ does not use any vertex in $\overrightarrow{P_k}$. Thus $\overrightarrow{D}=\overrightarrow{D}'+\overrightarrow{C_0}$ is a directed cycle. 
\end{itemize}
\end{proof}

Now we present an example to show that a circuit signature being acyclic is stronger than being triangulating. (We used a computer program to find the example.)
\begin{proposition}\label{nonex}
There exists a planar graph that admits a triangulating but not acyclic cycle signature. 
\end{proposition}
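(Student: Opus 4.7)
The plan is to exhibit an explicit planar graph $G$ and a cycle signature $\sigma$, then verify the two required properties by direct inspection. By Theorem~\ref{triangular}, the triangulating condition reduces to the finite test that whenever two directed cycles in $\sigma$ sum (as vectors) to a signed cycle, that sum also belongs to $\sigma$. Non-acyclicity reduces to exhibiting a single nontrivial nonneg combination $\sum a_C\,\sigma(C)=0$.

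The natural source of a nonneg dependence in a planar graph is the face-cycle relation: orienting every face cycle (including the outer face) with the face on its left yields $\sum_i F_i=0$, a positive integer dependence with all coefficients equal to $1$. Starting from this, one orients the remaining cycles adaptively so that the triangulating condition is preserved; by Theorem~\ref{triangular} this only needs to be checked on pairs of cycles in $\sigma$ whose vector sum is again a signed cycle.

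The principal obstacle is a combinatorial rigidity that defeats this strategy on the smallest graphs. In a $2$-connected planar graph such as $K_4$ or the triangular prism, certain pairs of adjacent faces satisfy $F_i+F_j=\pm C$ for some other simple cycle $C$; the complementary pair then gives $F_k+F_l=\mp C$, and no sign choice for $\sigma(C)$ avoids a triple of directed cycles in $\sigma$ summing to zero. So the graph $G$ must be chosen with enough flexibility to resolve all such conflicts simultaneously. Via Theorems~\ref{trig-intro} and \ref{main3}, this is equivalent to exhibiting a planar graph whose Lawrence polytope admits a non-regular triangulation.

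A computer search (as the author notes) is the practical route to locate a suitable $G$: one enumerates small planar graphs, lists their cycles, and tests candidate sign assignments derived from the face-cycle relation (or from other positive relations in the cycle space, such as a long cycle $C$ bounding several faces giving $-\sigma(C)+\sum\sigma(F_i)=0$). For each candidate $(G,\sigma)$ the triangulating property is a bounded case-check on pairs of cycles, and non-acyclicity is certified by producing the explicit coefficients $a_C\geq 0$. Once such a pair is found, the proof is complete by inspection; the hard step is the search itself, which cannot obviously be shortened because the simplest candidate graphs fail for the rigidity reason above.
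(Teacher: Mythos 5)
Your proposal describes a search strategy but never produces a witness, so the existence claim is not actually established. The statement is an existence result; a proof must either exhibit a concrete pair $(G,\sigma)$ and verify the two properties, or give a non-constructive argument that such a pair exists. You do neither: you correctly identify that non-acyclicity requires a nonnegative dependence $\sum a_C\sigma(C)=0$ and that the triangulating property can be certified via Theorem~\ref{triangular} by checking that no three directed cycles in $\sigma$ sum to zero, and your observation that $K_4$ (and similar small $2$-connected planar graphs) is too rigid --- because adjacent face pairs and their complements force $F_i+F_j=\pm C$ conflicts --- is a reasonable heuristic for why the example cannot be tiny. But "a computer search is the practical route" followed by "once such a pair is found, the proof is complete" is a promissory note, not a proof. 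The entire mathematical content of this proposition is the explicit example, and it is missing.

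For comparison, the paper's proof takes $G=K_5$ minus one edge (planar, five bounded faces $C_1,\dots,C_5$ oriented counterclockwise) and writes down an explicit signature $\sigma$ of all twenty-two cycles: the counterclockwise ones are $2,3,5,23,25,123,235,245,345,1235,2345$ and the clockwise ones are $-1,-4,-12,-13,-34,-45,-125,-134,-234,-1234,-12345$ (where $23$ denotes $\overrightarrow{C_2}+\overrightarrow{C_3}$, etc.). Non-acyclicity is witnessed by $123+245+(-234)+(-125)=0$, a nonnegative dependence with four terms --- note this is \emph{not} the all-faces relation you propose as the "natural source," and indeed the chosen signature does not orient all five faces coherently ($2,3,5$ are counterclockwise but $-1,-4$ are clockwise). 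The triangulating property is then a finite check, via Theorem~\ref{triangular}, that no \emph{three} of the twenty-two chosen directed cycles sum to zero. To repair your proposal you would need to supply such an explicit graph and signature and carry out (or at least credibly delegate) both verifications.
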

\begin{proof}
We remove one edge in the complete graph on $5$ vertices and denote the new graph by $G$.

\begin{figure}[ht]
            \centering
            \includegraphics[scale=0.8]{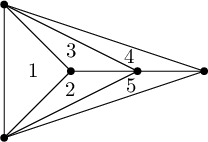}
            \caption{The graph $G$ used in Proposition~\ref{nonex}}
            \label{nonfig}
\end{figure}

The graph $G$ is planar, which allows us to present its directed cycles using regions. We denote by $C_i$ the cycle that bounds the region labeled by $i$ in Figure~\ref{nonfig}, where $i=1,2,3,4,5$. By orienting them counterclockwise, we obtain five directed cycles $\overrightarrow{C_1},\cdots, \overrightarrow{C_5}$. 

Let the cycle signature $\sigma$ be the set of the following directed cycles. The counterclockwise ones are 
$2$, $3$, $5$, $23$, $25$, $123$, $235$, $245$, $345$, $1235$, $2345$, and the clockwise ones are $-1$, $-4$, $-12$, $-13$, $-34$, $-45$, $-125$, $-134$, $-234$, $-1234$, $-12345$, where ``$23$'' means $\overrightarrow{C_2}+\overrightarrow{C_3}$, ``$-234$'' means $-\overrightarrow{C_2}-\overrightarrow{C_3}-\overrightarrow{C_4}$, etc. There are twenty-two cycles in all. 

The signature $\sigma$ is not acyclic because the sum of the directed cycles $123$, $245$, $-234$, and $-125$ is zero. 

It is straightforward to check $\sigma$ is triangulating by Theorem~\ref{tri-sign}(2). (This should be done in minutes by hand. We remark that it is much harder to check $\sigma$ or $\mathcal{A}_\sigma$ is triangulating by definition since there are $75$ spanning trees. )
\end{proof}

\subsection{The Bernardi bijection}\label{Bernardi}
We will apply our theory to recover and generalize some features of the Bernardi bijection in this subsection. For the definition of the Bernardi bijection $f_{\mathcal{A}_\text{B},\mathcal{A}_q^*}$, see Example~\ref{B atlas}.

Note that the internal atlas $\mathcal{A}_q^*$ is a special case of $\mathcal{A^*_{\sigma^*}}$, where $\sigma^*$ is an acyclic cocycle signature \cite[Example 5.1.5]{Yuen2}, so $\mathcal{A}_q^*$ is triangulating. The external atlas $\mathcal{A}_\text{B}$ is not triangulating in general (Remark~\ref{rem-nonexample}). However, it is always dissecting. This fact was discovered and proved by K\'alm\'an and T\'othm\'er\'esz \cite{KT1,KT2} in a different language; see Section~\ref{intro-motivation}. For readers' convenience, we give a proof here.

\begin{lemma}\cite[Lemma 7.7]{KT1}\cite[Proposition 5.6]{KT2}\label{bernardidissect}
The external atlas $\mathcal{A}_\text{B}$ is dissecting. 
\end{lemma}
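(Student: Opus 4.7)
The plan is to exhibit, for any pair of distinct spanning trees $B_1 \ne B_2$, a concrete signed cocircuit that qualifies as a potential cocircuit of $\overrightarrow{F} := \overrightarrow{B_1} \cap (-\overrightarrow{B_2})$.

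First I would read off the constraint edge by edge. A direct computation of $-\overrightarrow{F}^c$ (analogous to Table~\ref{table1}) shows that a signed cocircuit $\overrightarrow{C^*}$ is a potential cocircuit of $\overrightarrow{F}$ if and only if (i) $C^*$ avoids $B_1 \cap B_2$; (ii) on each $e \in C^* \cap (B_1 \backslash B_2)$ the arc equals $-\overrightarrow{B_2}|_e$; (iii) on each $e \in C^* \cap (B_2 \backslash B_1)$ the arc equals $\overrightarrow{B_1}|_e$; and (iv) on each $e \in C^* \backslash (B_1 \cup B_2)$ the arc equals $\overrightarrow{B_1}|_e$ whenever the two tour-orientations disagree there, with no constraint if they agree. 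So the task reduces to exhibiting a cocircuit whose arcs obey these rules.

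Next, I would pick the candidate by running the Bernardi tour around $B_1$. Since $B_1 \ne B_2$ and both are spanning trees, the set $B_2 \backslash B_1$ is nonempty, and every such edge is external to $B_1$ and hence cut twice by the tour. Let $e_0$ be the element of $B_2 \backslash B_1$ whose \emph{first} cut occurs earliest in the tour, and take the fundamental cocircuit $C^*(B_2, e_0)$, oriented so that the arc on $e_0$ is $\overrightarrow{B_1}|_{e_0}$. Since $C^*(B_2, e_0) \backslash \{e_0\} \subseteq E \backslash B_2$, condition (i) is automatic, and condition (iii) holds on $e_0$ by construction.

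The main obstacle is verifying (ii) and (iv) for the remaining edges $e' \in C^*(B_2, e_0) \backslash \{e_0\}$, which requires controlling the Bernardi orientations of $B_1$ and of $B_2$ across the cut determined by $C^*(B_2, e_0)$. The key leverage is the first-cut property of $e_0$: by the time the $B_1$-tour makes its first cut at $e_0$, no edge of $B_2 \backslash B_1$ has yet been cut, so the portion of the tour preceding that moment stays entirely on one side of the cut. A ribbon-graph analysis, comparing for each further cocircuit edge $e'$ the side occupied by the tour at the moment $e'$ is first cut, should align the required arc direction on $e'$ with $\overrightarrow{B_1}|_{e'}$ (when $e' \notin B_1$) or with $-\overrightarrow{B_2}|_{e'}$ (when $e' \in B_1 \backslash B_2$). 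This side-tracking across two different Bernardi tours is the essential combinatorial content and the step I expect to be most delicate.
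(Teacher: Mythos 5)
Your reduction of the problem to the edge-by-edge conditions (i)--(iv) is correct and matches what one reads off from Table~\ref{table1}, but the candidate cocircuit and the argument offered for it do not work. The central claim --- that because no edge of $B_2\backslash B_1$ has been cut before the first cut at $e_0$, the $B_1$-tour up to that moment ``stays entirely on one side of the cut'' determined by $C^*(B_2,e_0)$ --- is a non sequitur. The tour moves by traversing edges of $B_1$, and the cocircuit $C^*(B_2,e_0)$ contains, besides $e_0$, only edges external to $B_2$; since $B_1$ is spanning and $e_0\notin B_1$, the tree $B_1$ necessarily contains at least one edge of $C^*(B_2,e_0)\cap(B_1\backslash B_2)$, and the tour may traverse such an edge (thereby crossing the cut) long before it ever cuts an element of $B_2\backslash B_1$. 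So the first-cut property of $e_0$ gives you no control over which side of the cut the tour occupies. A second, independent gap is that conditions (ii) and (iv) constrain $\overrightarrow{C^*}$ against $\overrightarrow{B_2}$, i.e.\ against the \emph{other} Bernardi tour, and nothing in your setup relates the two tours: choosing $e_0$ by when the $B_1$-tour cuts it says nothing about where the $B_2$-tour is when it first cuts the edges of $C^*(B_2,e_0)$.

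The paper's proof repairs both defects at once by different choices. It takes $e_0$ to be the first edge at which the two Bernardi \emph{processes} differ (necessarily in exactly one tree, say $e_0\in B_1\backslash B_2$), and uses the fundamental cocircuit $C^*(B_1,e_0)$ with respect to the tree \emph{containing} $e_0$, oriented away from $q$. Because $e_0$ is then the unique $B_1$-edge in the cut, the $B_1$-tour provably stays on the $q$-side until it first traverses $e_0$, which makes the side-tracking argument for that single tour valid: cocircuit edges first cut before that moment get $-\overrightarrow{B_1}|_f\subseteq\overrightarrow{C^*}$, and those first cut after get $\overrightarrow{B_1}|_f\subseteq\overrightarrow{C^*}$. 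The comparison with $\overrightarrow{B_2}$ is then handled entirely by the fact that the two processes coincide up to the first difference: every cocircuit edge cut before that moment satisfies $\overrightarrow{B_2}|_f=\overrightarrow{B_1}|_f$, so $\overrightarrow{F}|_f=\emptyset$ and the constraint is vacuous, while on $e_0$ itself the $B_2$-tour cuts from the $q$-side endpoint, giving exactly $-\overrightarrow{B_2}|_{e_0}\subseteq\overrightarrow{C^*}$. If you want to salvage your write-up, replace your $e_0$ and cocircuit by these and the rest of your framework goes through.
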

\begin{proof}
By definition, we need to check  $\overrightarrow{F}=\overrightarrow{B_1}\cap(-\overrightarrow{B_2})$ has a potential cocircuit, where $B_1$ and $B_2$ are two different spanning trees.

Consider the first edge $e_0$ where the Bernardi tours for $B_1$ and $B_2$ differ. Without loss of generality, we may assume $e_0\in B_1$ and $e_0\notin B_2$. Consider the fundamental cocircuit $C^*$ of $e_0$ with respect to $B_1$. We orient it away from $q$ and get the signed cocircuit $\overrightarrow{C^*}$. We will prove that $\overrightarrow{C^*}$ is a potential cocircuit of $\overrightarrow{F}$. See Figure~\ref{lemmapic}.  
\begin{figure}[ht]
            \centering
            \includegraphics[scale=1]{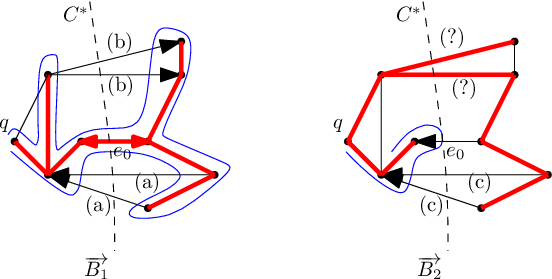}
            \caption{The figure used in the proof of Lemma~\ref{bernardidissect}. The trees are in red. The blue curves are the Bernardi tours. The labels (a), (b), and (c) of edges correspond to the ones in the proof. In $\protect\overrightarrow{B_2}$, the Bernardi tour after its first visit to $e_0$ is arbitrary and hence omitted. The edges in $C^*$ labeled by (?) have almost arbitrary patterns, although in this example they are in the tree. }
            \label{lemmapic}
\end{figure}

Note that the Bernardi tour for $B_1$ uses $e_0$ twice. When it visits $e_0$ the second time, every external edges $f$ in $C^*$ has been cut at least once. Recall that the notation $\overrightarrow{B_1}|_f$ means (the set of) the arc $\overrightarrow{f}$ induced by the tour. There are two cases.

(a) If the tour cuts $f$ before its first visit to $e_0$, then $-\overrightarrow{B_1}|_f\subseteq \overrightarrow{C^*}$.

(b) If the tour cuts $f$ after its first visit to $e_0$, then $\overrightarrow{B_1}|_f\subseteq \overrightarrow{C^*}$. Hence  $\overrightarrow{F}|_f\subseteq \overrightarrow{C^*}$.

Now we look at the Bernardi tour for $B_2$. We know the following two cases.

(c) For every edge $f$ in the case (a), we have $\overrightarrow{B_2}|_f=\overrightarrow{B_1}|_f$ because the two tours coincide until they reach $e_0$. Hence  $\overrightarrow{F}|_f=\emptyset\subseteq \overrightarrow{C^*}$.

(d) For the edge $e_0$, which is external with respect to $B_2$, we have $-\overrightarrow{B_2}|_{e_0}\subseteq \overrightarrow{C^*}$, and hence $\overrightarrow{F}|_{e_0}\subseteq \overrightarrow{C^*}$. 

By (b), (c), and (d), the signed circuit $\overrightarrow{C^*}$ is a potential cocircuit of $\overrightarrow{F}$. 
\end{proof}

Now we may apply Theorem~\ref{main1} and Theorem~\ref{main2} to $f_{\mathcal{A}_\text{B},\mathcal{A}_q^*}$ and get the following results, where Corollary~\ref{Bernardi-extend}(1) recovers the bijectivity of $\overline{f}_{\mathcal{A}_\text{B},\mathcal{A}_q^*}$ proved in \cite{Bernardi}, (2) extends it, and (3) generalizes it. 

\begin{corollary}\label{Bernardi-extend} Let $G$ be a connected ribbon graph.  
\begin{enumerate}
    \item The Bernardi map $f_{\mathcal{A}_\text{B},\mathcal{A}_q^*}$ induces a bijection $\overline{f}_{\mathcal{A}_\text{B},\mathcal{A}_q^*}:B\mapsto [\beta_{(q,e)}]$ between the spanning trees of $G$ and the cycle-cocycle reversal classes of $G$. 
    \item The Bernardi map $f_{\mathcal{A}_\text{B},\mathcal{A}_q^*}$ can be extended to a bijection $\varphi_{\mathcal{A}_\text{B},\mathcal{A}_q^*}$ between subgraphs and orientations in the sense of Theorem~\ref{main2}.
    \item Let $\sigma^*$ be any triangulating cocycle signature. Then the modified Bernardi map $f_{\mathcal{A}_\text{B},\mathcal{A}_{\sigma^*}^*}$ still has the properties (1) and (2). 
\end{enumerate}
\end{corollary}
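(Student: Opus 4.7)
The plan is to reduce all three parts to a direct application of Theorem~\ref{main1} and Theorem~\ref{main2} to the pair of atlases $(\mathcal{A}_\text{B}, \mathcal{A}_q^*)$ (resp.\ $(\mathcal{A}_\text{B}, \mathcal{A}_{\sigma^*}^*)$). The two main theorems have exactly the hypothesis ``one atlas is dissecting and the other is triangulating,'' and everything we need is already in place from the lemmas stated earlier in the excerpt.

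For part (1), I would first record that the external atlas $\mathcal{A}_\text{B}$ is dissecting by Lemma~\ref{bernardidissect}. Next, I would observe that the internal atlas $\mathcal{A}_q^*$ coincides with $\mathcal{A}^*_{\sigma^*}$ for the acyclic cocycle signature $\sigma^*$ obtained by orienting every cocycle away from $q$ (as noted in Example~\ref{B atlas} and \cite[Example 5.1.5]{Yuen2}); by Lemma~\ref{acyclic-tri}, an acyclic cocycle signature is triangulating, so $\mathcal{A}_q^*$ is triangulating. Thus the pair $(\mathcal{A}_\text{B}, \mathcal{A}_q^*)$ satisfies assumption~(1) of Proposition~\ref{th1prop}, and Theorem~\ref{main1} immediately supplies the desired bijection $\overline{f}_{\mathcal{A}_\text{B},\mathcal{A}_q^*}$ from spanning trees of $G$ to cycle-cocycle reversal classes.

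For part (2), the same pair $(\mathcal{A}_\text{B}, \mathcal{A}_q^*)$ satisfies the hypothesis of Theorem~\ref{main2}, so $\varphi_{\mathcal{A}_\text{B},\mathcal{A}_q^*}$ is a bijection between subsets of $E$ and orientations of $G$, and it specializes as described on independent sets and spanning sets. The only thing worth spelling out is that $\varphi_{\mathcal{A}_\text{B},\mathcal{A}_q^*}\circ f_{\mathcal{A}_\text{B},\mathcal{A}_q^*}$ is the identity on spanning trees by Definition~\ref{def-ext}, so this genuinely extends the Bernardi bijection.

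For part (3), I would simply replace $\mathcal{A}_q^*$ with $\mathcal{A}_{\sigma^*}^*$. By Theorem~\ref{trig-intro}, if $\sigma^*$ is any triangulating cocycle signature then $\mathcal{A}_{\sigma^*}^*$ is a triangulating internal atlas; combined with Lemma~\ref{bernardidissect}, the pair $(\mathcal{A}_\text{B}, \mathcal{A}_{\sigma^*}^*)$ again satisfies the hypotheses of Theorem~\ref{main1} and Theorem~\ref{main2}, yielding both (1) and (2) in this more general setting. There is no real obstacle in the argument, because all the combinatorial work (in particular checking the dissecting property of $\mathcal{A}_\text{B}$, which is the only nontrivial ingredient) has been packaged into Lemma~\ref{bernardidissect}; the main risk is merely a bookkeeping slip in identifying $\mathcal{A}_q^*$ with an $\mathcal{A}^*_{\sigma^*}$ coming from an acyclic $\sigma^*$, and that identification is standard.
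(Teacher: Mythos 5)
Your proposal is correct and follows essentially the same route as the paper: the paper likewise combines Lemma~\ref{bernardidissect} ($\mathcal{A}_\text{B}$ is dissecting) with the observation that $\mathcal{A}_q^*=\mathcal{A}^*_{\sigma^*}$ for an acyclic (hence triangulating) cocycle signature, and then invokes Theorem~\ref{main1} and Theorem~\ref{main2} directly, with part (3) obtained by substituting an arbitrary triangulating $\sigma^*$. No gaps.
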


\section{Lawrence polytopes}\label{Lawrence}
In this section, we will prove Theorem~\ref{3-fold} and Theorem~\ref{main3} introduced in Section~\ref{Lawrence-intro} together with some basic properties of Lawrence polytopes.

For the definitions, see Section~\ref{Lawrence-intro}. Here we recall that $M_{r\times n}$ is a totally unimodular matrix representing a \emph{loopless} regular matroid $\mathcal{M}$. The Lawrence matrix is 
\[\begin{pmatrix} M_{r\times n} & {\bf 0} \\  I_{n\times n} &  I_{n\times n} \end{pmatrix},\]
whose columns are denoted by $P_1, \cdots, P_n, P_{-1}, \cdots, P_{-n}$ in order. The \emph{Lawrence polytope} $\mathcal{P}\in\mathbb{R}^{n+r}$ is the convex hull of the points $P_1, \cdots, P_n, P_{-1}, \cdots, P_{-n}$.  

Due to duality, we will only prove Theorem~\ref{3-fold} and Theorem~\ref{main3} for $\mathcal{P}$. The proof is long, so we divide the section into three parts. 

\subsection{A single maximal simplex of the Lawrence polytope}

The target of this subsection is to characterize maximal simplices of the Lawrence polytope $\mathcal{P}$. 

We start with three basic lemmas, and the proofs are omitted. We denote by \[(x_1, \cdots, x_r, y_1, \cdots, y_n)\] the coordinates of the Euclidean space $\mathbb{R}^{n+r}$ containing $\mathcal{P}$. 
\begin{lemma}\label{affinesubspace}
The Lawrence polytope $\mathcal{P}$ is in the affine subspace $\sum\limits_{i=1}^{n}y_{i}=1$, and the affine subspace does not contain the origin and is of dimension $n+r-1$. 
\end{lemma}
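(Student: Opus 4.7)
The plan is to read all three claims directly off the block structure of the Lawrence matrix
\[
\begin{pmatrix} M_{r\times n} & \mathbf{0} \\ I_{n\times n} & I_{n\times n} \end{pmatrix},
\]
whose columns are the vertices $P_{\pm 1},\dots,P_{\pm n}$ of $\mathcal{P}$. By construction, the bottom $n$ coordinates of $P_i$ (for $1 \le i \le n$) are given by the $i$-th column of the left copy of $I_{n\times n}$, and the bottom $n$ coordinates of $P_{-i}$ are given by the $i$-th column of the right copy of $I_{n\times n}$. In either case these bottom coordinates form the standard basis vector $e_i\in\mathbb{R}^n$, so every vertex satisfies the linear equation $\sum_{i=1}^n y_i = 1$. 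Since this equation is preserved by affine (in particular convex) combinations, the entire convex hull $\mathcal{P}$ lies in the affine subspace $H:=\{\sum_{i=1}^n y_i=1\}$.

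For the remaining two assertions, I would observe that the origin of $\mathbb{R}^{n+r}$ has $\sum_{i=1}^n y_i = 0 \neq 1$, so $0\notin H$, and that $H$ is cut out of $\mathbb{R}^{n+r}$ by a single nontrivial linear equation, hence is an affine hyperplane of dimension $n+r-1$. There is no real obstacle; the only point requiring care is the bookkeeping convention that columns $P_i$ and $P_{-i}$ of the Lawrence matrix share the same last-$n$ block $e_i$, which is precisely what forces the uniform equation $\sum y_i=1$ at every vertex.
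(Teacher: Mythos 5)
Your proof is correct: the paper explicitly omits the proof of this lemma as basic, and your argument (reading the last $n$ coordinates of every column $P_{\pm i}$ as the standard basis vector $e_i$, so that every vertex and hence the whole convex hull satisfies $\sum_{i=1}^{n} y_i = 1$, a hyperplane of dimension $n+r-1$ missing the origin) is precisely the intended one. Nothing is missing.
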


\begin{lemma}\label{affinesimplex}
The convex hull of $k+1$ points $Q_1, \cdots, Q_{k+1}$ in an affine subspace that does not pass through the origin is a $k$-dimensional simplex if and only if the corresponding vectors $Q_1, \cdots, Q_{k+1}$ are linearly independent. 
\end{lemma}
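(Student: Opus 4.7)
The plan is to reduce the statement to a standard equivalence between affine and linear independence, made possible by the hypothesis that the affine subspace avoids the origin. Recall that by definition the convex hull of $k+1$ points forms a $k$-dimensional simplex exactly when those points are \emph{affinely independent}, equivalently when the $k$ translated vectors $Q_2-Q_1,\dots,Q_{k+1}-Q_1$ are linearly independent. So the whole lemma boils down to proving, under the hypothesis $0\notin H$ (where $H$ denotes the ambient affine subspace), that $Q_1,\dots,Q_{k+1}$ are affinely independent if and only if they are linearly independent as vectors in $\mathbb{R}^{n+r}$.

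For the easy direction, I would observe that linear independence always implies affine independence in any context: a nontrivial affine relation $\sum_{i=2}^{k+1} a_i(Q_i-Q_1)=0$ rearranges to $\sum_{i=1}^{k+1} a_i' Q_i = 0$ with not all $a_i'$ zero, contradicting linear independence. This direction does not use the hypothesis on $H$.

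For the converse, assume affine independence and suppose toward contradiction that $\sum_{i=1}^{k+1} c_i Q_i = 0$ is a nontrivial linear relation. The dichotomy is on $\sum c_i$: if $\sum c_i = 0$, then the same relation is a nontrivial affine dependence, contradicting affine independence. If $\sum c_i \neq 0$, then dividing through by this scalar expresses the origin as an affine combination of $Q_1,\dots,Q_{k+1}$, placing $0$ in the affine span of the $Q_i$'s. But this affine span is contained in $H$, contradicting the hypothesis $0\notin H$. This second sub-case is precisely where the non-passing-through-origin assumption is essential, and it is the one subtlety in an otherwise routine argument; I do not anticipate any real obstacle beyond being careful that the linear relation is converted into an affine combination with coefficients summing to $1$.
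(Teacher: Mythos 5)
Your proof is correct and complete; the paper explicitly omits the proof of this lemma (it is listed among the ``basic lemmas'' whose proofs are skipped), and your argument is the standard one the author surely intended. Both directions are handled properly, and you correctly isolate the only nontrivial point: a linear relation with nonzero coefficient sum would place the origin in the affine span of the $Q_i$, contradicting the hypothesis that the ambient affine subspace avoids the origin.
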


\begin{lemma}\label{linear}
The linear combination $\sum\limits_{i=1}^{n}(a_iP_i+a_{-i}P_{-i})$ is zero if and only if $a_i=-a_{-i}$ for all $i$ and $\sum\limits_{i=1}^{n} a_iM_i=0$, where $M_i$ is the $i$-th column of $M$.
\end{lemma}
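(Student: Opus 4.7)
The plan is a direct computation that exploits the explicit block form of the Lawrence matrix. Writing a point of $\mathbb{R}^{n+r}$ as a pair $(u,v)$ with $u\in\mathbb{R}^r$ and $v\in\mathbb{R}^n$, the columns read off from the Lawrence matrix are $P_i=(M_i,e_i)$ for $i\in\{1,\ldots,n\}$ and $P_{-i}=(0,e_i)$, where $M_i$ is the $i$-th column of $M$ and $e_i$ is the $i$-th standard basis vector of $\mathbb{R}^n$. So any linear combination $\sum_{i=1}^n(a_iP_i+a_{-i}P_{-i})$ splits block-wise into an $\mathbb{R}^r$-part and an $\mathbb{R}^n$-part, which can be analyzed independently.

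The upper $\mathbb{R}^r$-component of $\sum_{i=1}^n(a_iP_i+a_{-i}P_{-i})$ is $\sum_{i=1}^n a_iM_i$, and the lower $\mathbb{R}^n$-component is $\sum_{i=1}^n(a_i+a_{-i})e_i$. Since the $e_i$ form a basis of $\mathbb{R}^n$, the lower block vanishes if and only if $a_i+a_{-i}=0$ for every $i$, that is $a_i=-a_{-i}$. Given this, the vanishing of the upper block is precisely the equation $\sum_{i=1}^n a_iM_i=0$. Both directions of the biconditional follow at once. There is no real obstacle here: the lemma is essentially a repackaging of the block definition of the Lawrence matrix, and the only point that requires a little care is the pairing convention, namely that the coefficient $a_{-i}$ of $P_{-i}$ must be matched with the coefficient $a_i$ of the $P_i$ that shares the same standard basis vector $e_i$ in its lower block.
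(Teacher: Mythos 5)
Your computation is correct: the paper states this lemma among "three basic lemmas" whose proofs are omitted, and your block-wise reading of the Lawrence matrix (upper block $\sum_i a_iM_i$, lower block $\sum_i(a_i+a_{-i})e_i$) is exactly the routine verification the author had in mind. Nothing is missing.
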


\begin{proposition}\label{vertex}
The vertices of $\mathcal{P}$ are the points $P_1, \cdots, P_n, P_{-1}, \cdots, P_{-n}$. 
\end{proposition}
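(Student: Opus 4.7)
The plan is to exploit the block structure of the Lawrence matrix
\[
\begin{pmatrix} M_{r\times n} & {\bf 0} \\ I_{n\times n} & I_{n\times n} \end{pmatrix},
\]
whose lower block $(I\ |\ I)$ already separates the points into pairs. The $y_j$-coordinate (that is, the $(r+j)$-th coordinate of $\mathbb{R}^{n+r}$) of $P_k$ equals $1$ if $|k|=j$ and $0$ otherwise. In particular, for each index $j \in \{1,\dots,n\}$, only the two points $P_j$ and $P_{-j}$ among the generators of $\mathcal{P}$ have nonzero $y_j$-coordinate.

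Since $\mathcal{P}$ is the convex hull of the $2n$ points $P_{\pm 1},\dots,P_{\pm n}$, its vertex set is a subset of these points. To show every $P_k$ is actually a vertex, I would fix $k$ (say $k>0$, by symmetry) and assume for contradiction that $P_k$ lies in the convex hull of the other $2n-1$ points. Writing this as $P_k = \sum_{i\neq k} \lambda_i P_i + \sum_{i} \mu_i P_{-i}$ with nonnegative coefficients summing to $1$, I would compare the $y_k$-coordinate on both sides: the left-hand side is $1$, and on the right-hand side only $P_{-k}$ contributes, so $\mu_k = 1$ and every other coefficient vanishes. This forces $P_k = P_{-k}$.

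The final step is to rule this out using the looplessness assumption. The top $r$ coordinates of $P_k$ form the column $M_k$ of $M$, whereas the top $r$ coordinates of $P_{-k}$ are zero. Hence $P_k = P_{-k}$ would force $M_k = 0$, meaning the edge indexed by $k$ is a loop of $\mathcal{M}$. This contradicts the standing hypothesis that $\mathcal{M}$ is loopless, so $P_k$ must be a vertex.

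I do not anticipate any real obstacle; the only subtlety worth flagging is that the looplessness hypothesis enters in exactly this place (ensuring $P_k \neq P_{-k}$), which is why the definition of $\mathcal{P}$ required that assumption in the first place.
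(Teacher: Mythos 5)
Your proof is correct and follows essentially the same route as the paper: assume $P_k$ is a convex combination of the other generators, force the coefficient of $P_{-k}$ to be $1$, conclude $P_k=P_{-k}$, and contradict looplessness. The only cosmetic difference is that you compare the $y_k$-coordinate directly, whereas the paper packages that observation into its Lemma~\ref{linear}.
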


\begin{proof}
It suffices to show that any point $P_i$, where $i$ could be positive or negative, cannot be expressed as a convex combination of the other points. Assume by contradiction that we can do so for some $P_i$. Then by Lemma~\ref{linear}, $P_{-i}$ must have coefficient one in the convex combination, and hence $P_i=P_{-i}$. This contradicts our assumption that $\mathcal{M}$ is loopless.
\end{proof}

Recall that the arcs of $\mathcal{M}$ are denoted by $\overrightarrow{e_1}, \cdots, \overrightarrow{e_n}$ and $\overrightarrow{e_{-1}}, \cdots, \overrightarrow{e_{-n}}$. We denote the underlying edge of the arc $\overrightarrow{e_{i}}$ by $e_i$, where $i>0$. We define the bijection 
    \begin{align*}
    \chi:\{\text{vertices of }\mathcal{P}\} & \to \{\text{arcs of }\mathcal{M}\} \\
    P_i & \mapsto \overrightarrow{e_i}
    \end{align*}

We need the following lemma to characterize the maximal simplices of $\mathcal{P}$.

\begin{lemma}\label{linear2} Let $I\subseteq\{1, \cdots, n, -1, \cdots, -n\}$. Then the set of vectors $\{P_i: i\in I\}$ is linearly dependent if and only if there exists a bioriented circuit in $\{\overrightarrow{e_i}: i\in I\}$, where a bioriented circuit is the union of two opposite signed circuits (as sets of arcs). 
\end{lemma}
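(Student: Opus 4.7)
My plan is to reduce the geometric statement to the linear-algebraic characterization in Lemma~\ref{linear}, and then invoke Lemma~\ref{conformal} to produce signed circuits from kernel elements. The key point is that because the bottom block of the Lawrence matrix is $(I \mid I)$, the coefficients of $P_j$ and $P_{-j}$ in any linear dependence are forced to sum to zero, so nontrivial dependencies come in \emph{paired} coefficients, which is exactly what produces bioriented circuits.

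For the forward direction, suppose $\sum_{i \in I} a_i P_i = 0$ nontrivially. Extending by zero coefficients and applying Lemma~\ref{linear}, I get a vector $b = (b_1, \dots, b_n) \in \ker_\mathbb{R}(M)$ with $b_j = a_j$ (setting $a_j = 0$ for $j \notin I$) and with the property that $b_j = -b_{-j}$. I would first observe that if $b_j \neq 0$, then both $j$ and $-j$ must lie in $I$ (otherwise the corresponding coefficient would be zero, contradicting $b_j = -b_{-j}$). Then I apply Lemma~\ref{conformal}(1) to write $b = \sum_\ell k_\ell \overrightarrow{C_\ell}$ as a conformal sum of signed circuits. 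Picking any single $\overrightarrow{C_\ell}$, the conformality guarantees: for each $j \in C_\ell$, the sign of $\overrightarrow{C_\ell}$ at $e_j$ agrees with the sign of $b_j$, so whichever arc $\overrightarrow{C_\ell}$ contains at $e_j$ is in $\{\overrightarrow{e_i} : i \in I\}$; moreover, by the paired-index observation, the opposite arc is \emph{also} in $\{\overrightarrow{e_i} : i \in I\}$. Hence both $\overrightarrow{C_\ell}$ and $-\overrightarrow{C_\ell}$ sit inside $\{\overrightarrow{e_i} : i \in I\}$, giving the desired bioriented circuit.

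For the converse, suppose $\{\overrightarrow{e_i} : i \in I\}$ contains a bioriented circuit coming from some signed circuit $\overrightarrow{C}$. Viewing $\overrightarrow{C}$ as a $\{0,\pm1\}$-vector $b \in \ker_\mathbb{R}(M)$, I define coefficients $a_j := b_j$ for $j > 0$ and $a_{-j} := -b_j$. A direct check via Lemma~\ref{linear} (top block: $\sum a_j M_j = \sum b_j M_j = 0$; bottom block: $a_j + a_{-j} = 0$) shows that $\sum (a_j P_j + a_{-j} P_{-j}) = 0$, and this combination is nontrivial because $\overrightarrow{C}\neq 0$. Whenever $a_j \neq 0$, the arc $\overrightarrow{e_j}$ appears in either $\overrightarrow{C}$ or $-\overrightarrow{C}$ and $\overrightarrow{e_{-j}}$ in the other, so both $j$ and $-j$ belong to $I$ (since the bioriented circuit is contained in $\{\overrightarrow{e_i}:i\in I\}$). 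Thus the dependence is supported on $I$, proving $\{P_i : i \in I\}$ is linearly dependent.

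The main subtlety, which I would present carefully, is the bookkeeping in the forward direction: the vector $b$ produced from the dependence need not itself be a signed circuit, so one must invoke the conformal decomposition to extract an actual signed circuit $\overrightarrow{C_\ell}$, and then argue that the pairing $b_j = -b_{-j}$ still propagates through this decomposition to guarantee that the opposite signed circuit $-\overrightarrow{C_\ell}$ is also contained in $\{\overrightarrow{e_i} : i \in I\}$. Apart from this, the proof is essentially formal once the paired-coefficient structure is noted.
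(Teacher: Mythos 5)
Your proof is correct and follows essentially the same route as the paper's (which is a one-line reduction to Lemma~\ref{linear} plus the correspondence between linear dependences among the columns of $M$ and circuits); your use of the conformal decomposition of Lemma~\ref{conformal} is just a signed, fully detailed version of that correspondence. The key observation that the paired condition $a_j=-a_{-j}$ forces both indices $j$ and $-j$ into $I$ whenever the coefficient is nonzero is exactly the point that makes the bioriented circuit appear, and you handle it correctly in both directions.
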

\begin{proof}
This is due to Lemma~\ref{linear} and the fact that a collection of columns $M_i$ of $M$ is linearly dependent if and only if the corresponding edges $e_i$ contain a circuit. 
\end{proof}

\begin{corollary}[Theorem~\ref{3-fold}(1)(2)]\label{dim}
\quad

\begin{enumerate}
\item The Lawrence polytope $\mathcal{P}\subseteq\mathbb{R}^{n+r}$ is an $(n+r-1)$-dimensional polytope whose vertices are exactly the points $P_1, \cdots, P_n, P_{-1}, \cdots, P_{-n}$. 

\item The map $\chi$ induces a bijection (still denoted by $\chi$)
    \begin{align*}
    \chi:\{\text{maximal simplices of }\mathcal{P}\} & \to \{\text{externally oriented bases of }\mathcal{M}\} \\
    \begin{gathered}
        \text{a maximal simplex}\\
        \text{with vertices }\{P_i:i\in I\}
    \end{gathered} & \mapsto \text{the fourientation }\{\chi(P_i):i\in I\}. 
    \end{align*}
\end{enumerate}

\end{corollary}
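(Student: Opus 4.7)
The plan is to prove both parts simultaneously by exploiting the key equivalence that, thanks to Lemma~\ref{linear2} and Lemma~\ref{affinesimplex}, a set of $k+1$ vertices of $\mathcal{P}$ spans a $k$-dimensional simplex if and only if the corresponding arc set of $\mathcal{M}$ contains no bioriented circuit. So both claims reduce to understanding which subsets of arcs of $\mathcal{M}$ avoid bioriented circuits.

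For part (1), the inequality $\dim(\mathcal{P}) \le n+r-1$ is immediate from Lemma~\ref{affinesubspace}. For the reverse inequality, I will construct $n+r$ linearly independent columns of the Lawrence matrix. Fix any basis $B$ of $\mathcal{M}$ and consider the arc set obtained by taking both $\overrightarrow{e_i}$ and $\overrightarrow{e_{-i}}$ for each internal edge and exactly one arc for each external edge. This yields $2r + (n-r) = n+r$ arcs. Any circuit of $\mathcal{M}$ must contain at least one external edge (since $B$ is independent), and that external edge appears only one-way, so the arc set contains no bioriented circuit. By Lemma~\ref{linear2}, the corresponding $P_i$'s are linearly independent, and by Lemma~\ref{affinesimplex} they span an $(n+r-1)$-dimensional simplex inside $\mathcal{P}$.

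For part (2), I first observe that the map on maximal simplices is well-defined and injective because $\chi$ is a bijection on vertices. It remains to check that the image of any maximal simplex is an externally oriented basis, and conversely. Let $I$ index the $n+r$ vertices of a maximal simplex, and partition the edges of $\mathcal{M}$ into $E_0$ (both arcs in $I$, i.e., ``bioriented''), $E_1$ (exactly one arc in $I$), and $E_2$ (no arc in $I$). The count $2|E_0|+|E_1|=n+r$ together with $|E_0|+|E_1|+|E_2|=n$ gives $|E_0|=r+|E_2|$. If $E_0$ contained a circuit $C$, then both $\overrightarrow{C}$ and $-\overrightarrow{C}$ would lie in the arc set, producing a bioriented circuit, contradicting linear independence via Lemma~\ref{linear2}. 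Hence $E_0$ is independent, so $|E_0|\le r$, which forces $|E_2|=0$ and $|E_0|=r$. Thus $E_0$ is a basis, every other edge is one-way oriented, and the arc set is an externally oriented basis as desired.

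For surjectivity, take an externally oriented basis $\overrightarrow{B}$: its $n+r$ arcs contain no bioriented circuit (every circuit meets $E\setminus B$, whose edges are one-way oriented), so by Lemma~\ref{linear2} the corresponding $P_i$'s are linearly independent, and by Lemma~\ref{affinesimplex} they span a maximal simplex of $\mathcal{P}$. There is no real obstacle here; the main point is simply to package the ``no bioriented circuit $\Leftrightarrow$ linear independence $\Leftrightarrow$ externally oriented basis'' equivalence cleanly via the dimension-counting identity $|E_0|=r+|E_2|$, which forces equality in the matroid rank bound.
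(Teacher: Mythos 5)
Your proof is correct and follows essentially the same route as the paper: the paper's own argument simply asserts that a set of arcs with no bioriented circuit has cardinality at most $n+r$, with equality exactly for externally oriented bases, and then cites Lemma~\ref{linear2}, Lemma~\ref{affinesubspace}, and Lemma~\ref{affinesimplex}. Your counting identity $|E_0|=r+|E_2|$ together with the rank bound is just an explicit writing-out of that ``clearly'' step, so there is nothing to add.
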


\begin{proof}
We have proved the part on the vertices of $\mathcal{P}$ (Proposition~\ref{vertex}). 

Then we prove (2). Let $\overrightarrow{F}$ be a set of arcs of $\mathcal{M}$. By Lemma~\ref{linear2}, 
Lemma~\ref{affinesubspace}, and Lemma~\ref{affinesimplex}, the set $\overrightarrow{F}$ does not contain a bioriented circuit if and only if the corresponding vertices of $\mathcal{P}$ (via $\chi^{-1}$) form a simplex. When the set $\overrightarrow{F}$ does not contain a bioriented circuit, its bioriented edges are independent in $\mathcal{M}$. Hence the cardinality of $\overrightarrow{F}$ satisfies $|\overrightarrow{F}|\leq n+r$, and the equality holds if and only if $\overrightarrow{F}$ is an externally oriented basis of $\mathcal{M}$. Thus (2) holds. 

Now we complete the proof of (1). Because the dimension of a maximal simplex is $n+r-1$, the dimension of $\mathcal{P}$ is also $n+r-1$ by Lemma~\ref{affinesubspace}. 
\end{proof}

\subsection{Two maximal simplices of the Lawrence polytope}
To show Theorem~\ref{3-fold}(3), which characterizes the triangulations and dissections of $\mathcal{P}$, we first prove Proposition~\ref{local}, which characterizes when two maximal simplices satisfy (II) and (III) in Definition~\ref{tri-diss-def}, respectively. 

Note that when we say two simplices or two fourientations, they might be identical. 

We need some preparations.
\begin{definition}
(1) If the vertices of a simplex $S$ are some of the vertices of $\mathcal{P}$, then $S$ is called a \emph{simplex of $\mathcal{P}$}.

(2) The relative interior of $S$ is denoted by $S^\circ$. 
\end{definition}
The following lemma is basic, and the proof is omitted. 

\begin{lemma}\label{unique}
Let $S$ be a simplex and $x\in S$. Then the point $x$ can be uniquely written as a convex combination of the vertices of $S$. Moreover, 
$x\in S^\circ$ if and only if each vertex of $S$ has a nonzero coefficient in the convex combination. 
\end{lemma}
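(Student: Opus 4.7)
The plan is to reduce both parts of the lemma to the defining affine independence of the vertices of $S$. Recall that a simplex with vertices $v_0, v_1, \ldots, v_k$ is by definition the convex hull of affinely independent points, which is equivalent to the differences $v_1 - v_0, \ldots, v_k - v_0$ being linearly independent in the ambient vector space.

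For the uniqueness claim, I would write two convex combinations $x = \sum_{i=0}^{k} \lambda_i v_i = \sum_{i=0}^{k} \mu_i v_i$ with $\sum \lambda_i = \sum \mu_i = 1$ and all coefficients nonnegative. Subtracting and eliminating $v_0$ via the sum condition yields $\sum_{i=1}^{k}(\lambda_i - \mu_i)(v_i - v_0) = 0$. Linear independence of the differences forces $\lambda_i = \mu_i$ for $i \geq 1$, and then $\lambda_0 = \mu_0$ follows from the normalization. Thus every point of $S$ has a unique tuple of barycentric coordinates.

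For the relative interior statement, I would first observe that the map $(\lambda_0,\ldots,\lambda_k) \mapsto \sum_i \lambda_i v_i$ is an affine bijection from the standard $k$-simplex (the nonnegative tuples summing to one) onto $S$, and it extends to an affine homeomorphism from the hyperplane $\{\sum \lambda_i = 1\} \subseteq \mathbb{R}^{k+1}$ onto the affine hull $\operatorname{aff}(S)$. Under this identification, $S^{\circ}$ corresponds exactly to the tuples with all coordinates strictly positive. The ``only if'' direction then follows because any point with a zero coefficient lies on the proper face of $S$ opposite that vertex, and hence is on the boundary of $S$ relative to $\operatorname{aff}(S)$; the ``if'' direction follows because small perturbations of a strictly positive coefficient vector preserving the sum remain nonnegative, so the corresponding point has an open neighborhood in $\operatorname{aff}(S)$ contained in $S$.

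There is no serious obstacle here: the entire argument is a standard consequence of affine independence, which is why the paper labels the lemma as basic and omits the proof. The only point requiring mild care is that the relative interior must be taken with respect to $\operatorname{aff}(S)$ rather than the ambient Euclidean space, since $S$ typically has positive codimension in $\mathbb{R}^{n+r}$; passing through the standard simplex under the affine bijection above handles this automatically.
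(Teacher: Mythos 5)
Your proof is correct, and since the paper explicitly omits the proof of this lemma as basic, your argument via affine independence of the vertices (barycentric coordinates, plus the affine homeomorphism with the standard simplex for the relative-interior claim) is exactly the standard one the paper is implicitly invoking. Nothing is missing.
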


The following lemma gives an equivalent description of (III) in Definition~\ref{tri-diss-def}. The book \cite[Definition 2.3.1]{DRS} uses it as a definition. The proof is omitted. 
\begin{lemma}\label{tri-lemma}
Let $S_1$ and $S_2$ be two maximal simplices of $\mathcal{P}$. Then $S_1$ and $S_2$ intersect in a common face if and only if for any face $A_1$ of $S_1$ and any face $A_2$ of $S_2$ such that $A_1^\circ\cap A_2^\circ\neq\emptyset$, we have $A_1=A_2$. 
\end{lemma}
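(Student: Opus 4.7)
The plan is to reduce the lemma to the standard simplex decomposition $S = \bigsqcup_A A^\circ$, where the disjoint union is over all faces $A$ of $S$ (including the empty face). This decomposition says that each point of $S$ lies in the relative interior of a unique face of $S$, and it will provide the uniqueness needed in both directions. I will also use the standard fact that the faces of a simplex $F$ coincide with the faces of any simplex $S\supseteq F$ that happen to lie inside $F$ (equivalently, faces correspond bijectively to subsets of the vertex set).

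For the forward direction, I will assume $F:=S_1\cap S_2$ is a common face and take $x\in A_1^\circ\cap A_2^\circ$. Then $x\in F$, and since the faces of $F$ are precisely the faces of $S_1$ lying inside $F$ (and similarly for $S_2$), the uniqueness of the face of $S_1$ whose relative interior contains $x$ forces $A_1$ to be a face of $F$, and likewise $A_2$ is a face of $F$. Applying the same uniqueness statement inside the simplex $F$ then yields $A_1 = A_2$.

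For the reverse direction, I will show that $S_1\cap S_2$ is the simplex $F := \mathrm{conv}(V(S_1)\cap V(S_2))$, which is automatically a common face of $S_1$ and $S_2$ because $V(S_1)\cap V(S_2)$ is affinely independent as a subset of $V(S_1)$. The inclusion $F \subseteq S_1\cap S_2$ is immediate. For the reverse inclusion, given $x \in S_1\cap S_2$, the hypothesis applied to the unique faces $A_i^{(x)}$ of $S_i$ whose relative interiors contain $x$ gives $A_1^{(x)} = A_2^{(x)}$; the vertex set of this common face then lies in $V(S_1)\cap V(S_2)$, so $x\in F$. The argument is purely formal and uses nothing specific to the Lawrence polytope or to matroid theory; the only thing to be careful about is the bookkeeping between faces of $F$ viewed as a simplex in its own right and faces of $S_i$ contained in $F$, and the convention that the empty set counts as a face so that the case $S_1\cap S_2 = \emptyset$ is handled uniformly.
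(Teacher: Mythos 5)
Your argument is correct, but there is nothing in the paper to compare it against: the paper explicitly omits the proof of this lemma, noting only that \cite{DRS} takes the stated condition as the \emph{definition} of ``intersecting in a common face.'' What you have written is the standard, purely combinatorial argument establishing the equivalence of the two formulations, and it is sound. Both directions rest on the decomposition $S=\biguplus_A A^\circ$ over faces $A$ of a simplex $S$, which holds because the vertices are affinely independent (Lemma~\ref{unique} in the paper gives exactly the needed uniqueness of barycentric coordinates). The two bookkeeping points you flag are indeed the only places requiring care, and both go through: a face of $S_1$ contained in a face $F$ of $S_1$ is a face of $F$, since a vertex of $S_1$ cannot lie in the convex hull of other vertices of $S_1$; and in the reverse direction, $A_1=A_2$ as point sets forces equal vertex sets (the vertices are the extreme points), so $V(A_1)\subseteq V(S_1)\cap V(S_2)$ and hence $x\in A_1\subseteq\mathrm{conv}(V(S_1)\cap V(S_2))$. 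The empty-face convention handles $S_1\cap S_2=\emptyset$ vacuously on both sides. Your proof is a legitimate self-contained substitute for the citation.
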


We aim at proving Lemma~\ref{hard lemma}, which characterizes $A_1^\circ\cap A_2^\circ=\emptyset$ in terms of fourientations. Before proving it, we prove a lemma characterizing when a fourientation is ``totally cyclic''. 

\begin{lemma}\label{lemma for revision}
Let $\overrightarrow{F}$ be a fourientation of $\mathcal{M}$. Then every one-way oriented edge in $\overrightarrow{F}$ belongs to a potential circuit of $\overrightarrow{F}$ if and only if there exists a vector $(u_i)\in\ker_\mathbb{R}(M)$ such that 
\begin{itemize}
    \item $\overrightarrow{F}|_{e_i}=\emptyset$ implies $u_i=0$,
    \item $\overrightarrow{F}|_{e_i}=\{\overrightarrow{e_i}\}$ implies $u_i>0$,
    \item and $\overrightarrow{F}|_{e_i}=\{\overrightarrow{e_{-i}}\}$ implies $u_i<0$.
    \end{itemize}

\end{lemma}

\begin{proof}
First we prove the ``only if'' part. For every one-way oriented edge $e$ in $\overrightarrow{F}$, let $\overrightarrow{C_e}$ be a potential circuit of $\overrightarrow{F}$ containing it. Viewing $\overrightarrow{C_e}$ as a vector in $\mathbb{R}^n$, we have $\overrightarrow{C_e}\in\ker_\mathbb{R}(M)$. Let \[\overrightarrow{u}=\sum \overrightarrow{C_e},\] where the sum is over all the one-way oriented edges $e$ in $\overrightarrow{F}$. If there is no one-way oriented edge in $\overrightarrow{F}$, the vector $\overrightarrow{u}$ is zero. Then $\overrightarrow{u}\in\ker_\mathbb{R}(M)$ satisfies the three desired properties. 

For the ``if'' part, by Lemma~\ref{conformal}, we may decompose the vector $(u_i)$ into a linear combination $\sum k_j\overrightarrow{C_j}$ of signed circuits, where the coefficients $k_j$ are positive and for each edge $e_i$ of each $C_j$, the sign of $e_i$ in $\overrightarrow{C_j}$ agrees with the sign of $u_i$. Hence every one-way oriented edge in $\overrightarrow{F}$ belongs to at least one $\overrightarrow{C_j}$ in the sum, and every $\overrightarrow{C_j}$ is a potential circuit of $\overrightarrow{F}$.

\end{proof}

To state Lemma~\ref{hard lemma}, we introduce a notation. For a fourientation $\overrightarrow{F}$, we denote its underlying edges by
\[E(\overrightarrow{F})=\{e\in E: \overrightarrow{F}|_e\neq\emptyset\}.\]

\begin{lemma}\label{hard lemma}
Assume $S_1$ and $S_2$ are two simplices of $\mathcal{P}$ (not necessarily maximal). Let $\overrightarrow{F_k}$ be the fourientation $\chi(S_k)$ for $k=1,2$, and denote $\overrightarrow{F}=\overrightarrow{F_1}\cap(-\overrightarrow{F_2})$. Then $S_1^\circ\cap S_2^\circ\neq\emptyset$ if and only if $E(\overrightarrow{F_1})=E(\overrightarrow{F_2})$ and every one-way oriented edge in $\overrightarrow{F}$ belongs to a potential circuit of $\overrightarrow{F}$.
\end{lemma}

\begin{proof}
We first prove the ``only if'' part. Let $x\in S_1^\circ\cap S_2^\circ$. Throughout the proof, $k\in\{1,2\}$. We denote by $F_k$ the set of \emph{indices} of the edges in $E(\overrightarrow{F_k})$ ($F_k\subseteq\{1,\cdots, n\})$.   

By Lemma~\ref{unique}, we may write 
\begin{equation}\label{eq in section 4}
x=\sum_{i\in F_1}(w_{1i}^+P_i+w_{1i}^-P_{-i})=\sum_{i\in F_2}(w_{2i}^+P_i+w_{2i}^-P_{-i}),    
\end{equation}
where the nonnegative coefficients $w_{ki}^+, w_{ki}^-$ sum up to $1$ for each $k$. Note that only when the edge $e_i$ is one-way oriented in $\overrightarrow{F_k}$, one of $w_{ki}^+$ and  $w_{ki}^-$ is zero, and all the other coefficients are positive. 

We compare the two convex combinations of $x$ using Lemma~\ref{linear}. From the lower half of the Lawrence matrix, we get $F_1=F_2$ and $w_{1i}^++ w_{1i}^-=w_{2i}^++w_{2i}^-$ for $i\in F_1$. Denote \[w_i=w_{1i}^++ w_{1i}^-.\] It is clear that $\sum\limits_{i\in F_1}w_i=1$ and each summand $w_i>0$. 

Now we focus on the upper half of the Lawrence matrix. By comparing the two convex combinations of $x$ restricted to the top $r$ entries, we obtain Table~\ref{table3}, which we explain below. For every $i\in F_1$, according to the status of the edge $e_i$ in $\overrightarrow{F}$, there are $4$ possible types given in the first column of Table~\ref{table3}. We omit ``$|_{e_i}$'' after $\overrightarrow{F}$ and $\overrightarrow{F_k}$ (e.g.$\overrightarrow{F}=\updownarrow$ means $\overrightarrow{F}|_{e_i}=\updownarrow$). These $4$ types can be further divided into $9$ types according to how $\overrightarrow{F_1}$ and $\overrightarrow{F_2}$ orient $e_i$, and neither $\overrightarrow{F_1}$ nor $\overrightarrow{F_2}$ could be empty over $e_i$ because $F_1=F_2$. Then for each of the $9$ types,  we know which of $w_{1i}^+, w_{1i}^-, w_{2i}^+$, and $w_{2i}^-$ are zero. For example, when $e_i$ is of the 4th type, we have $P_i\in S_1$ and $P_{-i}\notin S_1$ because $\overrightarrow{F_k}=\chi(S_k)$. Hence $w_{1i}^+P_i=w_iP_i$ and $w_{1i}^-P_{-i}=0$. We compute the top $r$ entries of $w_{1i}^+P_i+w_{1i}^-P_{-i}-w_{2i}^+P_i-w_{2i}^-P_{-i}$ as listed in the last column of Table~\ref{table3}. Because the top $r$ entries of $P_i$ and $P_{-i}$ are $M_i$ and $0$ respectively, the vector $w_{1i}^+P_i+w_{1i}^-P_{-i}-w_{2i}^+P_i-w_{2i}^-P_{-i}$ must be of the form $u_iM_i$, where $u_i=w_{1i}^+-w_{2i}^+$.

\begin{table}
\centering
\bgroup
\def\arraystretch{1.5}
\begin{tabular}{ |m{2.2cm}|m{4cm}|c|c|c|c|c| }

\hline
type of $e_i$ in terms of $\overrightarrow{F}$ & type of $e_i$ in terms of $\overrightarrow{F_k}$ and label  & $w_{1i}^+P_i$ & $w_{1i}^-P_{-i}$ & $w_{2i}^+P_i$ & $w_{2i}^-P_{-i}$ & $u_iM_i$\\
\hline

$\overrightarrow{F}=\updownarrow$ &  1: $\overrightarrow{F_1}=\overrightarrow{F_2}=\updownarrow$  & $w_{1i}^+P_i$ & $w_{1i}^-P_{-i}$& $w_{2i}^+P_i$ &  $w_{2i}^-P_{-i}$ & $(w_{1i}^+-w_{2i}^+)M_i$ \\

\hline

\multirow{2}{2.2cm}{$\overrightarrow{F}=\emptyset$}  &2: $\overrightarrow{F_1}=\overrightarrow{F_2}=\overrightarrow{e_i}$ &$w_{i}P_i$ & 0 & $w_{i}P_i$ &  0 & $0$ \\ 
\cline{2-7}

&3: $\overrightarrow{F_1}=\overrightarrow{F_2}=\overrightarrow{e_{-i}}$ &  0 & $w_{i}P_{-i}$ & 0 &  $w_{i}P_{-i}$ & $0$ \\

\hline

\multirow{3}{2.2cm}{$\overrightarrow{F}=\overrightarrow{e_i}$}  & 4: $\overrightarrow{F_1}=\overrightarrow{e_i}$, $\overrightarrow{F_2}=\updownarrow$ & $w_{i}P_i$ & 0 & $w_{2i}^+P_i$ &  $w_{2i}^-P_{-i}$  & $w_{2i}^-M_i$ \\ 
\cline{2-7}

& 5: $\overrightarrow{F_1}=\updownarrow$, $\overrightarrow{F_2}=\overrightarrow{e_{-i}}$ &  $w_{1i}^+P_i$ & $w_{1i}^-P_{-i}$& 0 &  $w_{i}P_{-i}$ & $w_{1i}^+M_i$ \\

\cline{2-7}

& 6: $\overrightarrow{F_1}=\overrightarrow{e_i}$, $\overrightarrow{F_2}=\overrightarrow{e_{-i}}$ &  $w_{i}P_i$ & 0 & 0 &  $w_{i}P_{-i}$ & $w_iM_i$ \\
\hline

\multirow{3}{2.2cm}{$\overrightarrow{F}=\overrightarrow{e_{-i}}$}  & 7: $\overrightarrow{F_1}=\overrightarrow{e_{-i}}$, $\overrightarrow{F_2}=\updownarrow$  & 0 & $w_{i}P_{-i}$ & $w_{2i}^+P_i$ &  $w_{2i}^-P_{-i}$  & $-w_{2i}^+M_i$ \\ 
\cline{2-7}

& 8: $\overrightarrow{F_1}=\updownarrow$, $\overrightarrow{F_2}=\overrightarrow{e_i}$ &  $w_{1i}^+P_i$ & $w_{1i}^-P_{-i}$& $w_{i}P_i$ & 0 & $-w_{1i}^-M_i$ \\

\cline{2-7}

& 9: $\overrightarrow{F_1}=\overrightarrow{e_{-i}}$, $\overrightarrow{F_2}=\overrightarrow{e_i}$ & 0 & $w_{i}P_{-i}$ & $w_{i}P_i$ &  0 & $-w_iM_i$ \\
\hline

\end{tabular}
\egroup
\caption{The computational results used in Lemma~\ref{hard lemma}. Here $i\in F_1=F_2$, $M_i$ is the $i$-th column of $M$, and $u_{i}M_i$ is the top $r$ entries of $w_{1i}^+P_i+w_{1i}^-P_{-i}-w_{2i}^+P_i-w_{2i}^-P_{-i}$ (which must be a scalar multiple of $M_i$). } 
\label{table3}

\vspace{-8mm}
\end{table}

By equation~(\ref{eq in section 4}), we have \[\sum_{i\in F_1}u_iM_i=0.\]
For every $i\notin F_1$, set $u_i=0$. Then we have $(u_i)\in\ker_\mathbb{R}(M)$. By comparing the first column with the last column of Table~\ref{table3}, we observe that, for each edge $e_i$ that is not bioriented in $\overrightarrow{F}$ (types $2$ to $9$), the sign of $u_i$ agrees with the sign of $e_i$ in $\overrightarrow{F}$. Hence we may apply Lemma~\ref{lemma for revision} to $(u_i)$, which completes the proof of the ``only if'' part.  

For the ``if'' part, our proof strategy is to reverse the proof of the ``only if'' part. Note that the second column of Table~\ref{table3} still lists all the possible types of edges in $E(\overrightarrow{F_1})(=E(\overrightarrow{F_2}))$. By Lemma~\ref{lemma for revision}, there exists a vector $(u_i)\in\ker_\mathbb{R}(M)$ such that for every $e_i$ that is not bioriented in $\overrightarrow{F}$, the sign pattern of $e_i$ agrees with the sign pattern of $u_i$. This implies that the vector $(u_i)$ agrees with the sign pattern of the last column of Table~\ref{table3} from type 2 to type 9, although we have not specified the weights ``$w$'' yet.

Then for each $i\in F_1$, we find nonnegative numbers $\Tilde{w}_{1i}^+$, $\Tilde{w}_{1i}^-$, $\Tilde{w}_{2i}^+$, and $\Tilde{w}_{2i}^-$ such that  \begin{itemize}
    \item $\Tilde{w}_{1i}-\Tilde{w}_{2i}=u_i$, 
    \item $\Tilde{w}_{1i}^++ \Tilde{w}_{1i}^-=\Tilde{w}_{2i}^++\Tilde{w}_{2i}^-$,  
    \item and the sign pattern agrees with the second column of Table~\ref{table3} (or the first column of Table~\ref{table4}). In other words, $\Tilde{w}_{ki}^+=0\Leftrightarrow \overrightarrow{e_{i}}\notin\overrightarrow{F_k}|_{e_i}$ and $\Tilde{w}_{ki}^-=0\Leftrightarrow \overrightarrow{e_{-i}}\notin\overrightarrow{F_k}|_{e_i}$. 
\end{itemize}

There are infinitely many choices for these $\Tilde{w}$'s; see Table~\ref{table4} for a choice. Lastly, we normalize the weights $\Tilde{w}_{ki}^+$ and $\Tilde{w}_{ki}^-$ to obtain $w_{ki}^+$ and $w_{ki}^-$ such that the total sum is $1$ for each $k$. Then we have $\sum\limits_{i\in F_1}(w_{1i}^+P_i+w_{1i}^-P_{-i})\in S_1^\circ$, $\sum\limits_{i\in F_2}(w_{2i}^+P_i+w_{2i}^-P_{-i})\in S_2^\circ$, and $\sum\limits_{i\in F_1}(w_{1i}^+P_i+w_{1i}^-P_{-i})=\sum\limits_{i\in F_2}(w_{2i}^+P_i+w_{2i}^-P_{-i})$ as desired.  

\begin{table}
\centering
\bgroup
\def\arraystretch{1.5}
\begin{tabular}{ |m{4cm}|c|c|c|c|c| }

\hline
type of $e_i$ in terms of $\overrightarrow{F_k}$ and label  & $\Tilde{w}_{1i}$ & $\Tilde{w}_{1i}$ & $\Tilde{w}_{2i}^+$ & $\Tilde{w}_{2i}^-$ & sign of $u_i$\\
\hline

1: $\overrightarrow{F_1}=\overrightarrow{F_2}=\updownarrow$  & $|u_i|+1$ & $|u_i|-u_i+1$ & $|u_i|-u_i+1$ &  $|u_i|+1$ & $+,-,0$ \\

\hline

2: $\overrightarrow{F_1}=\overrightarrow{F_2}=\overrightarrow{e_i}$ &$1$ & 0 & $1$ &  0 & $0$ \\ 

\hline

3: $\overrightarrow{F_1}=\overrightarrow{F_2}=\overrightarrow{e_{-i}}$ &  0 & $1$ & 0 &  $1$ & $0$ \\

\hline

4: $\overrightarrow{F_1}=\overrightarrow{e_i}$, $\overrightarrow{F_2}=\updownarrow$ & $2u_{i}$ & 0 & $u_{i}$ &  $u_{i}$  & $+$ \\ 
\hline

5: $\overrightarrow{F_1}=\updownarrow$, $\overrightarrow{F_2}=\overrightarrow{e_{-i}}$ &  $u_{1}$ & $u_{i}$ & 0 &  $2u_{i}$ & $+$ \\

\hline

6: $\overrightarrow{F_1}=\overrightarrow{e_i}$, $\overrightarrow{F_2}=\overrightarrow{e_{-i}}$ &  $u_{i}$ & 0 & 0 &  $u_{i}$ & $+$ \\
\hline

7: $\overrightarrow{F_1}=\overrightarrow{e_{-i}}$, $\overrightarrow{F_2}=\updownarrow$  & 0 & $-2u_i$ & $-u_{i}$ &  $-u_{i}$  & $-$ \\ 
\hline

8: $\overrightarrow{F_1}=\updownarrow$, $\overrightarrow{F_2}=\overrightarrow{e_i}$ &  $-u_{i}$ & $-u_{i}$ & $-2u_{i}$ & 0 & $-$ \\

\hline

9: $\overrightarrow{F_1}=\overrightarrow{e_{-i}}$, $\overrightarrow{F_2}=\overrightarrow{e_i}$ & 0 & $-u_{i}$ & $-u_{i}$ &  0 & $-$ \\
\hline

\end{tabular}
\egroup
\caption{The construction of $\Tilde{w}_{1i}^+$, $\Tilde{w}_{1i}^-$, $\Tilde{w}_{2i}^+$, and $\Tilde{w}_{2i}^-$ used in the proof of Lemma~\ref{hard lemma} } 
\label{table4}

\vspace{-8mm}
\end{table}

\end{proof}

We omit the proof of the following simple lemma.
\begin{lemma}\label{stupid}
Let $\overrightarrow{F_1}$ and $\overrightarrow{F_2}$ be two fourientations of $\mathcal{M}$ such that $E(\overrightarrow{F_1})=E(\overrightarrow{F_2})$. Then $\overrightarrow{F_1}\neq\overrightarrow{F_2}$ if and only if $\overrightarrow{F}=\overrightarrow{F_1}\cap(-\overrightarrow{F_2})$ contains a one-way oriented edge.
\end{lemma}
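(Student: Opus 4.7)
The plan is to verify the equivalence by a local case analysis on a single edge. Under the hypothesis $E(\overrightarrow{F_1})=E(\overrightarrow{F_2})$, the two fourientations both restrict to $\emptyset$ on every edge outside this common set, so they automatically agree there. Hence $\overrightarrow{F_1}\neq \overrightarrow{F_2}$ is equivalent to the existence of some edge $e\in E(\overrightarrow{F_1})=E(\overrightarrow{F_2})$ with $\overrightarrow{F_1}|_e\neq \overrightarrow{F_2}|_e$. On each such edge both restrictions are nonempty, so the problem reduces to the following local claim: for an edge $e$ at which $\overrightarrow{F_1}|_e$ and $\overrightarrow{F_2}|_e$ are both nonempty, the arc set $\overrightarrow{F}|_e = \overrightarrow{F_1}|_e \cap (-\overrightarrow{F_2}|_e)$ is one-way oriented if and only if $\overrightarrow{F_1}|_e\neq \overrightarrow{F_2}|_e$.

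To establish the local claim, I would run through the nine possible ordered pairs of states in $\{\overrightarrow{e},\,-\overrightarrow{e},\,\updownarrow\}$, viewing each $\overrightarrow{F_i}|_e$ as a nonempty subset of $\{\overrightarrow{e},-\overrightarrow{e}\}$. In the three ``equal'' cases, $\overrightarrow{F_1}|_e = \overrightarrow{F_2}|_e$ is mapped under $\overrightarrow{F_2}|_e \mapsto -\overrightarrow{F_2}|_e$ either to the opposite one-way state (giving intersection $\emptyset$) or to $\updownarrow$ again (giving intersection $\updownarrow$); in neither subcase is $\overrightarrow{F}|_e$ one-way oriented. In each of the remaining six ``unequal'' cases a direct check shows the intersection is a single arc, hence one-way oriented. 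Combining this with the reduction in the first paragraph yields the desired equivalence.

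No real obstacle is anticipated, as the argument is essentially a definitional unpacking. The value of recording the lemma is that, in combination with Lemma~\ref{hard lemma}, it will translate the geometric disjointness conditions (II) and (III) of Definition~\ref{tri-diss-def} for two maximal simplices of $\mathcal{P}$ into the combinatorial conditions on the fourientation $\overrightarrow{B_1}\cap(-\overrightarrow{B_2})$ that appear in the definitions of triangulating and dissecting atlases.
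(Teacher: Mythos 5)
Your proof is correct. The paper in fact omits the proof of this lemma entirely (it is introduced as ``a simple lemma without proof''), and your edge-by-edge case analysis --- using the hypothesis $E(\overrightarrow{F_1})=E(\overrightarrow{F_2})$ to reduce to edges where both restrictions are nonempty, then checking the nine local configurations --- is exactly the routine verification the author intends the reader to supply.
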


We are ready to prove the main result of this subsection. 
\begin{proposition}\label{local}
Let $S_1$ and $S_2$ be two maximal simplices of $\mathcal{P}$. Let $\overrightarrow{B_k}$ be the externally oriented basis $\chi(S_k)$ for $k=1,2$, and denote $\overrightarrow{F}=\overrightarrow{B_1}\cap(-\overrightarrow{B_2})$. 
\begin{enumerate}
    \item $S_1^\circ\cap S_2^\circ=\emptyset$ if and only if $\overrightarrow{F}$ has a potential cocircuit. 
    \item $S_1$ and $S_2$ intersect at a common face if and only if  $\overrightarrow{F}$ has no potential circuit.
    \item If $\overrightarrow{F}$ has a potential cocircuit or has no potential circuit, then $S_1\neq S_2$ implies $B_1\neq B_2$. 
\end{enumerate}
\end{proposition}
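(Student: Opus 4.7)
The approach is to specialize Lemma~\ref{hard lemma} to the case of \emph{maximal} simplices (where the externally oriented bases $\overrightarrow{B_k}=\chi(S_k)$ satisfy $E(\overrightarrow{B_k})=E$) and then use the 3-painting axiom (Lemma~\ref{3-painting}) together with basis/cocircuit incidence to toggle between statements about potential circuits and potential cocircuits.

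For part~(1), since $E(\overrightarrow{B_1})=E(\overrightarrow{B_2})=E$, Lemma~\ref{hard lemma} becomes: $S_1^\circ\cap S_2^\circ\neq\emptyset$ iff every one-way oriented edge of $\overrightarrow{F}$ belongs to some potential circuit of $\overrightarrow{F}$. Taking the negation and applying Lemma~\ref{3-painting}, the condition $S_1^\circ\cap S_2^\circ=\emptyset$ becomes ``some one-way edge of $\overrightarrow{F}$ lies in a potential cocircuit of $\overrightarrow{F}$''. One direction of the equivalence with ``$\overrightarrow{F}$ has a potential cocircuit'' is trivial; for the converse, any potential cocircuit $\overrightarrow{C^*}$ must meet the basis $B_1$, and edges in $B_1\cap B_2$ are bioriented in $\overrightarrow{F}$ (hence excluded from $\overrightarrow{C^*}$), so $\overrightarrow{C^*}$ must contain an edge of $B_1\setminus B_2$, which is one-way oriented in $\overrightarrow{F}$. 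The boundary case $S_1=S_2$ is handled separately: $\overrightarrow{F}$ then has no one-way edges, and the complement of a basis contains no cocircuit, so both sides of (1) are false.

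For part~(2), I combine Lemma~\ref{tri-lemma} with Lemma~\ref{hard lemma}. For the ``if'' direction, assume $\overrightarrow{F}$ has no potential circuit and let $A_1\subseteq S_1$, $A_2\subseteq S_2$ be faces with $A_1^\circ\cap A_2^\circ\neq\emptyset$. Writing $\overrightarrow{F_k'}=\chi(A_k)\subseteq\overrightarrow{B_k}$, the inclusion $\overrightarrow{F_1'}\cap(-\overrightarrow{F_2'})\subseteq\overrightarrow{F}$ shows this sub-fourientation also has no potential circuit. Lemma~\ref{hard lemma} then forces it to have no one-way edges and $E(\overrightarrow{F_1'})=E(\overrightarrow{F_2'})$; an edge-by-edge case check shows these two conditions together force $\overrightarrow{F_1'}=\overrightarrow{F_2'}$, i.e., $A_1=A_2$. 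For the ``only if'' direction (contrapositive), given a potential circuit $\overrightarrow{C}\subseteq\overrightarrow{F}$, I form the faces $A_1\subseteq S_1$ and $A_2\subseteq S_2$ on the vertex sets $\{P_i:\overrightarrow{e_i}\in\overrightarrow{C}\}$ and $\{P_{-i}:\overrightarrow{e_i}\in\overrightarrow{C}\}$ respectively. These are simplices by Lemma~\ref{linear2} (a signed circuit contains no bioriented sub-circuit), and their vertex sets are disjoint since $\overrightarrow{C}$ uses each edge in only one direction, so $A_1\neq A_2$. Using $\overrightarrow{C}\in\ker_{\mathbb R}(M)$ (so that $\sum c_e M_e=0$ for the circuit vector $(c_e)$), a direct computation shows the uniform convex combinations $\tfrac{1}{|C|}\sum P_i$ and $\tfrac{1}{|C|}\sum P_{-i}$ coincide and lie in $A_1^\circ\cap A_2^\circ$, contradicting intersection along a common face.

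For part~(3), suppose $B_1=B_2=B$. Then every edge of $B$ is bioriented in $\overrightarrow{F}$; since every cocircuit of $\mathcal{M}$ intersects every basis, every cocircuit contains an edge bioriented in $\overrightarrow{F}$, so $\overrightarrow{F}$ has no potential cocircuit. Under the condition of (1) this directly contradicts existence of a potential cocircuit. Under the condition of (2), $S_1\neq S_2$ yields a one-way edge in $\overrightarrow{F}$ by Lemma~\ref{stupid}, and Lemma~\ref{3-painting} places it either in a potential circuit (excluded by (2)) or a potential cocircuit (excluded above). In either case we reach a contradiction, hence $B_1\neq B_2$. The main obstacle is the routine but delicate edge-by-edge case check in part~(2) verifying that absence of one-way arcs in $\overrightarrow{F_1'}\cap(-\overrightarrow{F_2'})$ together with $E(\overrightarrow{F_1'})=E(\overrightarrow{F_2'})$ forces pointwise equality; once this is established, the rest of the argument is bookkeeping with the 3-painting axiom and the two preceding lemmas.
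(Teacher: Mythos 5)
Your proposal is correct, and its skeleton coincides with the paper's: both proofs specialize Lemma~\ref{hard lemma} to maximal simplices (where $E(\overrightarrow{B_1})=E(\overrightarrow{B_2})=E$ automatically), use the 3-painting axiom (Lemma~\ref{3-painting}) to convert ``some one-way edge lies in no potential circuit'' into ``some one-way edge lies in a potential cocircuit,'' reduce part (2) to Lemma~\ref{tri-lemma} plus Lemma~\ref{stupid}, and prove (3) by showing that $B_1=B_2$ with $S_1\neq S_2$ forces $\overrightarrow{F}$ to have a potential circuit and no potential cocircuit. Two of your steps diverge in a way worth noting. First, in the ``only if'' direction of (2) the paper applies the constructive half of Lemma~\ref{hard lemma} to the restrictions $\overrightarrow{B_1}|_C$ and $\overrightarrow{B_2}|_C$, whereas you build the faces on the vertex sets $\{P_i:\overrightarrow{e_i}\in\overrightarrow{C}\}$ and $\{P_{-i}:\overrightarrow{e_i}\in\overrightarrow{C}\}$ and exhibit the common barycenter directly, using $\sum_{\overrightarrow{e_i}\in\overrightarrow{C}}(P_i-P_{-i})=0$ from $\overrightarrow{C}\in\ker_{\mathbb{R}}(M)$ and Lemma~\ref{unique}; this bypasses the harder direction of Lemma~\ref{hard lemma} for that step and is more self-contained. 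Second, in part (1) you spell out the converse implication (a potential cocircuit must avoid the bioriented edges of $B_1\cap B_2$ yet meet the basis $B_1$, hence contains a one-way edge of $B_1\setminus B_2$, which by Lemma~\ref{exclusive} lies in no potential circuit); the paper leaves this direction essentially implicit, so your version is actually more complete at that point. No gaps.
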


\begin{proof}
First we prove (3). Assume by contradiction that $B_1=B_2$. Then $\overrightarrow{F}$ is a fourientation where the internal edges are all bioriented and there exists an external edge that is one-way oriented due to  $\overrightarrow{B_1}\neq \overrightarrow{B_2}$. So, $\overrightarrow{F}$ has no potential cocircuit and has a potential circuit, which contradicts the assumption. 

For (1), we apply Lemma~\ref{hard lemma} to $S_1$ and $S_2$. Since $E(\overrightarrow{B_1})=E(\overrightarrow{B_2})$ always holds, $S_1^\circ\cap S_2^\circ=\emptyset$ if and only if there exist a one-way oriented edge in $\overrightarrow{F}$ such that it does not belong to any potential circuit of $\overrightarrow{F}$. By Lemma~\ref{3-painting}, we find a potential cocircuit of $\overrightarrow{F}$.

For (2), we apply Lemma~\ref{tri-lemma}. The maximal simplices $S_1$ and $S_2$ do not intersect in a common face if and only if there exist two distinct faces $A_1$ of $S_1$ and $A_2$ of $S_2$ such that $A_1^\circ\cap A_2^\circ\neq\emptyset$, which by Lemma~\ref{hard lemma} is equivalent to

\begin{center}
($\star$) there exist two distinct fourientations $\overrightarrow{F_1}\subseteq\overrightarrow{B_1}$ and $\overrightarrow{F_2}\subseteq\overrightarrow{B_2}$ such that $E(\overrightarrow{F_1})=E(\overrightarrow{F_2})$ and every one-way oriented edge in $\overrightarrow{F_0}:=\overrightarrow{F_1}\cap(-\overrightarrow{F_2})$ belongs to a potential circuit of $\overrightarrow{F_0}$.
\end{center}

It remains to show ($\star$) is equivalent to $\overrightarrow{F}$ having a potential circuit. If ($\star$) holds, then $\overrightarrow{F_0}\subseteq\overrightarrow{F}$, and hence a potential circuit of  $\overrightarrow{F_0}$ is also a potential circuit of $\overrightarrow{F}$. By Lemma~\ref{stupid}, there is indeed a one-way oriented edge in $\overrightarrow{F_0}$. Thus $\overrightarrow{F}$ has a potential circuit. Conversely, if $\overrightarrow{F}$ has a potential circuit $\overrightarrow{C}$, then there must be a one-way oriented edge in $\overrightarrow{F}|_C$ (because in general $\overrightarrow{B_1}\cap(-\overrightarrow{B_2})$ does not contain bioriented circuits). Set $\overrightarrow{F_1}=\overrightarrow{B_1}|_C$ and $\overrightarrow{F_2}=\overrightarrow{B_2}|_C$. Clearly, we have $E(\overrightarrow{F_1})=E(\overrightarrow{F_2})$ and $\overrightarrow{F_0}=\overrightarrow{F}|_C$. By Lemma~\ref{stupid},  $\overrightarrow{F_1}\neq\overrightarrow{F_2}$. So, ($\star$) holds. 
\end{proof}

\begin{Rem}
Proposition~\ref{local}(2) is analogous to \cite[Lemma 12.6]{P}; see Section~\ref{intro-motivation} for details. 
\end{Rem}
    
\subsection{Volume of the Lawrence polytope and Theorem~\ref{3-fold}(3)}
Proposition~\ref{local} is close to Theorem~\ref{3-fold}(3). It remains to show the maximal simplices coming from a dissecting atlas or a triangulating atlas indeed cover the Lawrence polytope $\mathcal{P}$. Unfortunately, we cannot find a direct proof showing that any point of $\mathcal{P}$ is in some maximal simplex that comes from the given atlas. Instead, we make use of volume. 

Recall that $\mathcal{P}$ is in the affine space $\sum\limits_{i=1}^ny_i=1$, and the affine space is in $\mathbb{R}^{n+r}$ with coordinate system $(x_1, \cdots, x_r, y_1, \cdots, y_n)$.

For a polytope $S$, we denote by $\text{vol}(S)$ the volume of $S$. 

We first compute the volume of a maximal simplex of $\mathcal{P}$. 

\begin{lemma}\label{volume-simplex}
Let $S$ be a maximal simplex of $\mathcal{P}$. Then \[\text{vol}(S)=\frac{\sqrt{n}}{(n+r-1)!},\]
where $n$ is the number of edges and $n+r-1$ is the dimension of $\mathcal{P}$. In particular, all the maximal simplices of $\mathcal{P}$ have the same volume. 
\end{lemma}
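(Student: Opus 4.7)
The plan is to reduce the volume of the $(n+r-1)$-dimensional simplex $S\subseteq H$ to a determinant computation in the ambient $\mathbb{R}^{n+r}$, using the cone over $S$ with apex at the origin. Concretely, let $v_1,\dots,v_{n+r}$ denote the column vectors of the Lawrence matrix indexed by the vertices of $S$, so $S=\mathrm{conv}(v_1,\dots,v_{n+r})$. The cone $\mathrm{conv}(0,v_1,\dots,v_{n+r})$ is a full-dimensional $(n+r)$-simplex in $\mathbb{R}^{n+r}$, whose Euclidean volume is
\[
\frac{1}{(n+r)!}\bigl|\det(v_1\;v_2\;\cdots\;v_{n+r})\bigr|.
\]
On the other hand, by slicing parallel to $H$, the same cone has volume $\frac{h}{n+r}\cdot\mathrm{vol}(S)$, where $h$ is the distance from the origin to $H$. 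The hyperplane $H:\sum_{i=1}^n y_i=1$ has unit normal $\frac{1}{\sqrt n}(0,\dots,0,1,\dots,1)$ and so $h=1/\sqrt n$. Thus
\[
\mathrm{vol}(S)=\frac{\sqrt n}{(n+r-1)!}\bigl|\det(v_1\;\cdots\;v_{n+r})\bigr|,
\]
and it suffices to show that this determinant is $\pm 1$ for every maximal simplex.

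To compute the determinant, I would use the correspondence $S\leftrightarrow\overrightarrow{B}$ from Corollary~\ref{dim}. For each internal edge $i\in B$ both $P_i$ and $P_{-i}$ are vertices of $S$; for each external edge $j\notin B$ exactly one of $P_j,P_{-j}$ is a vertex. Replace each column $P_i$ with $P_i-P_{-i}=\binom{M_i}{0}$ for $i\in B$ (which preserves the determinant). After this operation and an appropriate reordering of rows and columns, the matrix becomes block-triangular of the form
\[
\begin{pmatrix} M_B & 0 & X \\ 0 & I_r & 0 \\ 0 & 0 & I_{n-r}\end{pmatrix},
\]
where $M_B$ is the $r\times r$ submatrix of $M$ on the columns indexed by $B$, and $X$ collects the top-$r$ blocks of the external-edge vertices (its precise form is irrelevant). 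Since $M$ is totally unimodular and $B$ is a basis, $|\det M_B|=1$, hence $|\det(v_1\;\cdots\;v_{n+r})|=1$. Plugging this into the displayed formula above gives $\mathrm{vol}(S)=\sqrt n/(n+r-1)!$.

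The only subtle point is the bookkeeping of signs and permutations in the block reduction, but since we only need the absolute value of the determinant, these do not matter. Everything else—the cone-over-$S$ identity, the computation of $h$, and the total unimodularity of $M_B$—is routine, so this is really just a direct calculation once the right change of columns is applied.
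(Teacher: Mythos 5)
Your proposal is correct and follows essentially the same route as the paper: both compute the volume of the cone over $S$ with apex at the origin in two ways (the base-times-height formula with $h=1/\sqrt{n}$, and $\frac{1}{(n+r)!}$ times a determinant), then show the determinant is $\pm 1$. The only cosmetic difference is that the paper deduces $|\det N|=1$ from total unimodularity of the whole Lawrence matrix, whereas you reduce to $|\det M_B|=1$ by an explicit block-triangularization; both are valid.
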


\begin{proof}
Let $\Tilde{S}$ be the pyramid whose base is $S$ and whose apex is the origin $O$. 

The height of $\Tilde{S}$ is the distance from $O$ to the affine hyperplane $\sum\limits_{i=1}^ny_i=1$, so \[\text{vol}(\Tilde{S})=\frac{1}{\dim(\Tilde{S})}\cdot\text{base}\cdot\text{height}=\frac{1}{n+r}\cdot \text{vol}(S)\cdot\frac{1}{\sqrt{n}}.\]

Another way to compute $\text{vol}(\Tilde{S})$ is using a determinant. Note that $\Tilde{S}$ is a simplex and one of its vertices is $O$. The coordinates of the $n+r$ vertices of $S$ are the corresponding columns of the Lawrence matrix \[\begin{pmatrix} M_{r\times n} & {\bf 0} \\  I_{n\times n} &  I_{n\times n} \end{pmatrix}.\]
Thus they form a $(n+r)\times(n+r)$ submatrix $N$. Hence
\[\text{vol}(\Tilde{S})=\frac{1}{\dim(\Tilde{S})!}\cdot|\det(N)|.\]

Because $M$ is totally unimodular, appending a standard unit vector to it still results in a totally unimodular matrix. By doing this repeatedly, we see that the Lawrence matrix is totally unimodular. Hence $\det(N)=\pm 1$ and \[\text{vol}(\Tilde{S})=\frac{1}{(n+r)!}.\]
By combining the two formulas of $\text{vol}(\Tilde{S})$, we get the desired formula.

\end{proof}

We denote 
\[t=\text{the number of the maximal simplices used in a dissection of }\mathcal{P}.\]
By Lemma~\ref{volume-simplex}, the number $t$ does not depend on the choice of dissections. It is well known that the \emph{regular triangulation} of a polytope always exists; see \cite[Section 2.2.1]{DRS}. Hence a triangulation (and hence a dissection) of $\mathcal{P}$ always exists. 

We denote 
\[b=\text{the number of bases of }\mathcal{M}.\]

The target is to show $t=b$. The following inequality is implied by the existence of a triangulation. 

\begin{lemma}\label{t<b}
$t\leq b$.    
\end{lemma}
\begin{proof}
Because there exists a triangulation of $\mathcal{P}$, by Proposition~\ref{local}(2)(3), the externally oriented bases corresponding to the maximal simplices in the triangulation have \emph{distinct} underlying bases. Thus $t\leq b$.
\end{proof}

The other direction is implied by the existence of a triangulating atlas. To show the existence of a triangulating atlas, we show the existence of an acyclic signature, which is implicitly proved in \cite{BBY} by making use of the following equivalent definition of acyclic signatures. 

\begin{lemma}\cite[Lemma 3.1.1]{BBY}\label{geometric signature}
Let $\sigma$ be a circuit signature of $\mathcal{M}$. Then $\sigma$ is acyclic if and only if there exists $\overrightarrow{w}\in\mathbb{R}^E$ such that $\overrightarrow{w}\cdot \overrightarrow{C}>0$ for each signed circuit $\overrightarrow{C}\in\sigma$, where the product is the usual inner product. 
\end{lemma}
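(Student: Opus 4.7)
The plan is to recognize the statement as a direct instance of the standard duality between nonnegative combinations and linear functionals --- essentially Gordan's theorem, or equivalently a clean application of the separating hyperplane theorem to a compact convex set.

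The easy direction ($\Leftarrow$) is immediate. Suppose such $\overrightarrow{w}$ exists and $\sum_C a_C \sigma(C) = 0$ in $\mathbb{R}^E$ with $a_C \geq 0$, the sum running over all circuits of $\mathcal{M}$. Taking the inner product with $\overrightarrow{w}$ yields $\sum_C a_C (\overrightarrow{w}\cdot\sigma(C)) = 0$, a sum of nonnegative terms. Since each factor $\overrightarrow{w}\cdot\sigma(C)$ is strictly positive, we must have $a_C = 0$ for every $C$, so $\sigma$ is acyclic.

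For the converse ($\Rightarrow$), I would reformulate acyclicity as a geometric nonmembership statement: $\sigma$ is acyclic if and only if the origin does not lie in the convex hull $K = \mathrm{conv}\{\sigma(C) : C \text{ is a circuit of } \mathcal{M}\} \subseteq \mathbb{R}^E$. This equivalence is routine: any nontrivial nonnegative relation $\sum a_C \sigma(C) = 0$ can be normalized by dividing through by $s = \sum a_C > 0$ to exhibit $0$ as a convex combination of the $\sigma(C)$, and conversely any convex combination expressing $0$ produces such a relation. Since $\mathcal{M}$ has finitely many circuits, $K$ is a compact convex polytope. Assuming $\sigma$ is acyclic, $K$ does not contain $0$, so the separating hyperplane theorem furnishes $\overrightarrow{w}\in\mathbb{R}^E$ and $c > 0$ with $\overrightarrow{w}\cdot v \geq c$ for every $v \in K$. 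Specializing $v = \sigma(C)$ yields $\overrightarrow{w}\cdot\sigma(C) \geq c > 0$ for each circuit $C$, which is the desired condition.

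There is no genuine obstacle here: the whole proof amounts to translating acyclicity into a statement about convex hulls and invoking the classical strict separation theorem for a point and a compact convex polytope. The only mild subtlety worth spelling out is the normalization step ensuring that the combinatorial condition (no nontrivial nonneg relation) is equivalent to the geometric condition ($0 \notin K$), since the definition of acyclicity is phrased via arbitrary nonnegative coefficients rather than convex combinations.
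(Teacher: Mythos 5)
Your proof is correct. The paper itself gives no argument for this lemma---it is quoted verbatim as \cite[Lemma 3.1.1]{BBY}---and the source proves it by exactly the route you take: acyclicity is equivalent to the origin not lying in the convex hull of the finitely many vectors $\sigma(C)$, and strict separation of a point from a compact convex polytope (Gordan's theorem) produces the functional $\overrightarrow{w}$. Both directions of your argument, including the normalization step relating arbitrary nonnegative relations to convex combinations, are complete and correct.
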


\begin{lemma}\label{exist}
Let $\mathcal{M}$ be a regular matroid. 
\begin{enumerate}
    \item \cite{BBY} There exists an acyclic circuit signature if $\mathcal{M}$ has at least one circuit. 
    \item There exists a triangulating external atlas.
\end{enumerate}
\end{lemma}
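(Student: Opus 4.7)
The plan is to prove (1) by making the geometric characterization in Lemma~\ref{geometric signature} explicit through a generic weight vector, and then to deduce (2) via the atlas–signature dictionary already developed.

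For (1), I would choose a vector $\vec{w}\in\mathbb{R}^E$ whose inner product with every signed circuit of $\mathcal{M}$ is nonzero, and then define $\sigma(C)$ to be the unique signed circuit supported on $C$ satisfying $\vec{w}\cdot\sigma(C)>0$. Such a $\vec{w}$ exists because for each circuit $C$ the condition $\vec{w}\cdot\overrightarrow{C}=0$ cuts out a proper linear hyperplane in $\mathbb{R}^E$, and there are only finitely many circuits in $\mathcal{M}$, so the union of the corresponding hyperplanes cannot exhaust $\mathbb{R}^E$. To verify acyclicity directly from Definition~\ref{def3}(2): if $\sum_C a_C\sigma(C)=0$ with all $a_C\geq 0$, then pairing with $\vec{w}$ yields $\sum_C a_C(\vec{w}\cdot\sigma(C))=0$; since each factor $\vec{w}\cdot\sigma(C)$ is strictly positive by construction, every $a_C$ must vanish. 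Alternatively, one may simply cite Lemma~\ref{geometric signature}, which gives the same conclusion once $\vec{w}$ is exhibited.

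For (2), I would split into two cases according to whether $\mathcal{M}$ has any circuits. If $\mathcal{M}$ has at least one circuit, apply part (1) to obtain an acyclic circuit signature $\sigma$, and then invoke Lemma~\ref{acyclic-tri} (equivalently, Theorem~\ref{trig-intro}) to conclude that the induced external atlas $\mathcal{A}_\sigma$ is triangulating. If $\mathcal{M}$ has no circuits, then every subset of $E$ is independent, so $E$ itself is the unique basis of $\mathcal{M}$; the external atlas then consists of a single externally oriented basis (in which every edge is bioriented), and the triangulating condition in Definition~\ref{key def} holds vacuously because it quantifies only over pairs of distinct bases $B_1\neq B_2$.

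I do not anticipate any real obstacle. The only subtle points are the elementary fact that a finite union of proper linear hyperplanes cannot cover $\mathbb{R}^E$, and the bookkeeping needed to dispense with the circuit-free case separately in (2). Both are routine, so the proof should be short and self-contained.
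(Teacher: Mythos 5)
Your proposal is correct and follows essentially the same route as the paper: a generic weight vector $\vec{w}$ avoiding the finitely many hyperplanes $\vec{w}\cdot\overrightarrow{C}=0$ yields an acyclic signature via Lemma~\ref{geometric signature} (you also verify acyclicity directly, which is a harmless bonus), and part (2) is handled exactly as in the paper by invoking Lemma~\ref{acyclic-tri} when a circuit exists and noting the vacuous case when $E$ is the unique basis. No gaps.
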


\begin{proof}
(1) We can always find $\overrightarrow{w}\in\mathbb{R}^E$ such that $\overrightarrow{w}\cdot \overrightarrow{C}\neq 0$ for any signed circuit $\overrightarrow{C}$. Then let $\sigma$ be the collection of signed circuits $\overrightarrow{C}$ with the property $\overrightarrow{w}\cdot \overrightarrow{C}>0$. By Lemma~\ref{geometric signature}, $\sigma$ is acyclic. 

(2) If $\mathcal{M}$ has at least one circuit, then by Lemma~\ref{acyclic-tri}, $\mathcal{A}_\sigma$ is triangulating. If $\mathcal{M}$ has no circuit, then the ground set $E$ is the unique basis of $\mathcal{M}$. In this case, it is easy to see that $\mathcal{M}$ has a unique external atlas $\mathcal{A}$, which is triangulating. 
\end{proof}

\begin{lemma}\label{t>b}
$t\geq b$.    
\end{lemma}
\begin{proof}
Because there exists a triangulating external atlas, by Proposition~\ref{local}(2), the polytope $\mathcal{P}$ contains at least $b$ distinct maximal simplices any two of which intersect at a common face. Thus $t\geq b$. 
\end{proof}

\begin{corollary}
The volume of the Lawrence polytope $\mathcal{P}$ is \[\text{vol}(\mathcal{P})=(\text{the number of bases of }\mathcal{M})\cdot\frac{\sqrt{n}}{(n+r-1)!}.\]
\end{corollary}
\begin{proof}
This is a direct consequence of Lemma~\ref{volume-simplex}, Lemma~\ref{t<b}, and Lemma~\ref{t>b}. 
\end{proof}

Now we are ready to prove Theorem~\ref{3-fold}(3). 

\begin{theorem}[Theorem~\ref{3-fold}(3)]
The map $\chi$ induces two bijections
    \begin{align*}
    \chi:\{\text{triangulations of }\mathcal{P}\} & \to \{\text{triangulating external atlases of }\mathcal{M}\} \\
    \begin{gathered}
        \text{a triangulation with} \\
        \text{maximal simplices }\{S_i:i\in I\}
    \end{gathered} & \mapsto \text{the external atlas }\{\chi(S_i):i\in I\}, 
    \end{align*}
    and \begin{align*}
    \chi:\{\text{dissections of }\mathcal{P}\} & \to \{\text{dissecting external atlases of }\mathcal{M}\} \\
     \begin{gathered}
         \text{a dissection with}\\\text{maximal simplices }\{S_i:i\in I\}
     \end{gathered} & \mapsto \text{the external atlas }\{\chi(S_i):i\in I\}.
    \end{align*}
\end{theorem}

\begin{proof}
This is a direct consequence of Proposition~\ref{local} and the fact $t=b$. 
\end{proof}

\subsection{Regular triangulations and acyclic signatures}\label{regular-sign}
For the basics of regular triangulations, we refer the readers to \cite{LS} and \cite[Chapter 2]{DRS}. Here we recall the construction of regular triangulations of a polytope $\mathcal{P}\subseteq\mathbb{R}^n$ with the vertex set $V$. 
\begin{enumerate}
    \item[(i)] Pick a \emph{height function} $h:V\rightarrow \mathbb{R}$. \emph{Lift} each vertex $v\in V$ in $\mathbb{R}^n$ to $\mathbb{R}^{n+1}$ by appending $h(v)$ to $v$ as the $(n+1)$-th coordinate. Take the convex hull of the lifted vertices and get a lifted polytope $\mathcal{P}'$. 
    \item[(ii)] Project the \emph{lower facets} of $\mathcal{P}'$ onto $\mathbb{R}^n$. Here, a lower facet is a facet that is visible from below (i.e., a facet whose outer normal vector has its last coordinate negative). 
    \item[(iii)] When all the lower facets are simplices, the projected facets form a triangulation of $\mathcal{P}$, called a \emph{regular triangulation}. (See \cite{DRS} for a proof.)    
\end{enumerate}

In the next few paragraphs, we fix the notations for this subsection. 

Let $\mathcal{P}$ be the Lawrence polytope of a loopless regular matroid $\mathcal{M}$. We use bold letters to represent column vectors. In particular, the vertices $P_i$ of $\mathcal{P}$ will be denoted by $\mathbf{P}_i$ instead. Let $(x_1, \cdots, x_r, y_1, \cdots, y_n)^T$ denote the coordinate of a point in $\mathbb{R}^{n+r}$. 

Recall that $\mathcal{P}$ spans the affine hyperplane $\sum\limits_{i=1}^{n}y_{i}=1$ (Lemma~\ref{affinesubspace} and Corollary~\ref{dim}). To lift the vertices $\mathbf{P}_i$ of $\mathcal{P}$, we use the space $\mathbb{R}^{n+r}$ and lift $\mathbf{P}_i$ along the normal vector \[\mathbf{n}_P=(0, \cdots, 0, 1, \cdots, 1)^T\] of the affine hyperplane spanned by $\mathcal{P}$. To be precise, for every $i=1, \cdots, n, -1, \cdots, -n$, we lift $\mathbf{P}_i$ to the point \[\mathbf{P}_i':=\mathbf{P}_i+h_i\cdot\mathbf{n}_P,\] where $h_i\in\mathbb{R}$. The lifted polytope $\mathcal{P}'$ is the convex hull of these lifted points $\mathbf{P}_i'$.

Fix an arbitrary maximal simplex $S$ of $\mathcal{P}$. Let $S'$ be the convex hull of the lifted points $\{\mathbf{P}_i':\mathbf{P}_i\in S\}$. 

\begin{lemma}
The polytope $S'$ is a simplex of dimension $n+r-1$. 
\end{lemma}
\begin{proof}
Assume for the sake of contradiction that the set $\{\mathbf{P}'_i:\mathbf{P}_i\in S\}$ is affinely dependent. Then there exists a vertex $\mathbf{P}_{i_0}\in S$ such that\[\sum_{\mathbf{P}_i\in S,i\neq i_0}k_i(\mathbf{P}'_i-\mathbf{P}'_{i_0})=0,\]where the real coefficients $k_i$ are not all zeros. This implies\[\sum_{\mathbf{P}_i\in S,i\neq i_0}k_i(\mathbf{P}_i-\mathbf{P}_{i_0})=-\sum_{\mathbf{P}_i\in S,i\neq i_0}k_i(h_i-h_{i_0})\mathbf{n}_P.\]Because the normal vector $\mathbf{n}_P$ is orthogonal to every $\mathbf{P}_i-\mathbf{P}_{i_0}$, we have \[\sum_{\mathbf{P}_i\in S,i\neq i_0}k_i(\mathbf{P}_i-\mathbf{P}_{i_0})=0,\]which contradicts that the set $\{\mathbf{P}_i:\mathbf{P}_i\in S\}$ is affinely independent.  
\end{proof}

We need to use the (arbitrary) data $h_i$'s and $\mathcal{P}$ to determine whether $S'$ is a lower facet of $\mathcal{P}'$. Let $H$ be the unique affine hyperplane of $\mathbb{R}^{n+r}$ that contains $S'$. Then $S'$ is a lower facet of $\mathcal{P}'$ if and only if for any vertex $\mathbf{P}_j$ of $\mathcal{P}$ not in $S$, the corresponding lifted point $\mathbf{P}_j'$ is higher than $H$ (with respect to the direction $\mathbf{n}_P$). 

The following lemma shows that $H$ is not parallel to $\mathbf{n}_P$. Denote the equation of $H$ by
\[H:\mathbf{n}_H^T\cdot \mathbf{x}=c,\] where $\mathbf{n}_H$ is the normal vector and $\mathbf{x}\in\mathbb{R}^{n+r}$ . 

\begin{lemma}\label{lem: not vertical}
$\mathbf{n}_H^T\cdot\mathbf{n}_P\neq 0$.
\end{lemma}
\begin{proof}
Because $H$ contains $S'$, the equality
\begin{equation}\label{eq0}
\mathbf{n}_H^T\cdot (\mathbf{P}_i+h_i\cdot\mathbf{n}_P)=c
\end{equation}
holds for every vertex $\mathbf{P}_i$ of $S$. 

Assume by contradiction that $\mathbf{n}_H^T\cdot\mathbf{n}_P=0$. Then by (\ref{eq0}), we have $\mathbf{n}_H^T\cdot \mathbf{P}_i=c$
for every vertex $\mathbf{P}_i$ of $S$. This implies that all the vertices of $S$ are also on $H$, and hence $H$ coincides with the affine hyperplane spanned by $\mathcal{P}$. Thus $\mathbf{n}_H$ is parallel to $\mathbf{n}_P$, which contradicts $\mathbf{n}_H^T\cdot\mathbf{n}_P=0$. 

\end{proof}

Let \[\mathbf{w}=(h_{-1}-h_1, \cdots, h_{-n}-h_n)^T\in\mathbb{R}^n\] and\[\overrightarrow{B}=\chi(S).\]

\begin{lemma}\label{lem: lower face}
For any $\mathbf{P}_j$ that is not a vertex of $S$, the corresponding lifted point $\mathbf{P}_j'$ is higher than $H$ if and only if $\mathbf{w}^T\cdot\mathbf{C}_{-j}>0$, where $\mathbf{C}_{-j}$ is the signed fundamental circuit of the arc $\overrightarrow{e_{-j}}=\chi(\mathbf{P}_{-j})$ with respect to the basis $B$. 
\end{lemma}
\begin{proof}

We define
\[\overline{h}_j=\text{the unique real number satisfying }\mathbf{P}_j+\overline{h}_j\cdot\mathbf{n}_P\in H,\]where the existence of $\overline{h}_j$ is due to Lemma~\ref{lem: not vertical}.  
Intuitively, we use the new number $\overline{h}_j$ to lift $\mathbf{P}_j$ so that it lies in $H$. Therefore $h_j>\overline{h}_j$ if and only if $\mathbf{P}_j'$ is higher than $H$. 

Without loss of generality, we may assume $\mathbf{n}_H^T\cdot\mathbf{n}_P=1$ by rescaling the equation of $H$. Then the equation (\ref{eq0}) can be simplified to
\begin{equation}\label{eq1}
\mathbf{n}_H^T\cdot \mathbf{P}_i+h_i=c,
\end{equation}
which holds for every vertex $\mathbf{P}_i$ of $S$. By the definition of $\overline{h}_j$, we also have 
\begin{equation}\label{eq2}
\mathbf{n}_H^T\cdot \mathbf{P}_j+\overline{h}_j=c.
\end{equation}

Because $\mathbf{C}_{-j}$ is a signed circuit, we have
\begin{align}
& M\cdot \mathbf{C}_{-j}  = 0 \nonumber \\
\Rightarrow \quad & (\mathbf{P}_1-\mathbf{P}_{-1}, \cdots, \mathbf{P}_n-\mathbf{P}_{-n})\cdot \mathbf{C}_{-j} =  0  \nonumber \\
\Rightarrow \quad &   \mathbf{n}_H^T\cdot(\mathbf{P}_1-\mathbf{P}_{-1}, \cdots, \mathbf{P}_n-\mathbf{P}_{-n})\cdot \mathbf{C}_{-j} =  0 \nonumber \\
\Rightarrow \quad &   (\mathbf{n}_H^T\cdot\mathbf{P}_1-\mathbf{n}_H^T\cdot\mathbf{P}_{-1}, \cdots, \mathbf{n}_H^T\cdot\mathbf{P}_n-\mathbf{n}_H^T\cdot\mathbf{P}_{-n})\cdot \mathbf{C}_{-j} =  0. \label{eq3}
\end{align}
Here $(\mathbf{P}_1-\mathbf{P}_{-1}, \cdots, \mathbf{P}_n-\mathbf{P}_{-n})$ is an $(r+n)\times n$ matrix. 

In the last equality, we focus on the vertices $\mathbf{P}_i$ and $\mathbf{P}_{-i}$ such that the $i$-th entry of $\mathbf{C}_{-j}$ is non-zero since the other terms contribute zero to the left-hand side. These vertices correspond to the arcs in $\mathbf{C}_{-j}$ and $-\mathbf{C}_{-j}$. By the definitions of $\mathbf{C}_{-j}$ and $\overrightarrow{B}$, these vertices are all in $S$ except $\mathbf{P}_j$. Thus by (\ref{eq1}), (\ref{eq2}), and (\ref{eq3}), we have 
\[(h_{-1}-h_1,\cdots, h_{-j}-\overline{h}_j, \cdots, h_{-n}-h_n)\cdot \mathbf{C}_{-j} =  0, \text{ for }j>0;\]
\[(h_{-1}-h_1,\cdots,\overline{h}_{j}-h_{-j}, \cdots, h_{-n}-h_n)\cdot \mathbf{C}_{-j} =  0, \text{ for }j<0.\]

Hence we have \[\mathbf{w}^T\cdot\mathbf{C}_{-j}=(0,\cdots,\overline{h}_j-h_j,\cdots, 0)\cdot \mathbf{C}_{-j}, \text{ for }j>0;\]
 \[\mathbf{w}^T\cdot\mathbf{C}_{-j}=(0,\cdots,h_j-\overline{h}_j,\cdots, 0)\cdot \mathbf{C}_{-j}, \text{ for }j<0.\]

Note that the $|j|$-th entry of $\mathbf{C}_{-j}$ has the opposite sign to $j$. Therefore $h_j>\overline{h}_j$ if and only if $\mathbf{w}^T\cdot\mathbf{C}_{-j}>0$. 
\end{proof}

We are ready to prove the last main result of the paper. Recall that the map $\alpha:\sigma\mapsto\mathcal{A}_\sigma$ is a bijection between triangulating circuit signatures and triangulating external atlases of $\mathcal{M}$, and the map $\chi$ is a bijection between triangulations of $\mathcal{P}$ and triangulating external atlases.

\begin{theorem}[Theorem~\ref{main3}]
The restriction of the bijection $\chi^{-1}\circ\alpha$ to the set of acyclic circuit signatures of $\mathcal{M}$ is bijective onto the set of regular triangulations of $\mathcal{P}$. 
\end{theorem}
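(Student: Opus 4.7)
The plan is to construct an explicit dictionary between weight vectors $\overrightarrow{w}\in\mathbb{R}^E$ witnessing the acyclicity of $\sigma$ (in the sense of Lemma~\ref{geometric signature}) and height functions on the vertex set of $\mathcal{P}$ producing the regular triangulation $\chi^{-1}(\mathcal{A}_\sigma)$. The correspondence I would propose is the antisymmetric lift $h(P_i)=-w_i,\ h(P_{-i})=w_i$ for $i=1,\dots,n$.

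For the forward direction, assume $\sigma$ is acyclic with weight vector $\overrightarrow{w}$ and fix $h$ as above. To show that the resulting regular triangulation equals $\chi^{-1}(\mathcal{A}_\sigma)$, I would verify that for every basis $B$ the maximal simplex $S=\chi^{-1}(\overrightarrow{B})$ with $\overrightarrow{B}\in\mathcal{A}_\sigma$ is a lower facet of the lifted polytope. Working in the affine hull $\sum y_k=1$ (Lemma~\ref{affinesubspace}), the witnessing affine function has the form $\ell_S(x,y)=\overrightarrow{a}_S\cdot x+\overrightarrow{b}_S\cdot y$. Imposing $\ell_S=h$ on the bioriented pairs $\{P_i,P_{-i}:i\in B\}$ forces $\overrightarrow{a}_S\cdot M_i=-2w_i$ for $i\in B$, which determines $\overrightarrow{a}_S$ uniquely since $\{M_i\}_{i\in B}$ is a basis of $\mathbb{R}^r$. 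The fundamental circuit expansion $M_j=\sum_{i\in B}c_i M_i$ for external $e_j\notin B$ then yields the key identity
\[
\overrightarrow{a}_S\cdot M_j \;=\; -2w_j + 2\,\overrightarrow{w}\cdot\overrightarrow{C}(B,\overrightarrow{e_j}),
\]
and a short sign check converts the required strict inequality $\ell_S<h$ at the vertex of each pair not in $S$ into $\overrightarrow{w}\cdot\sigma(C(B,e_j))>0$, which holds by acyclicity. Since $\chi^{-1}(\mathcal{A}_\sigma)$ already tiles $\mathcal{P}$ by simplices with disjoint interiors, the lower facets of the lifted polytope must be exactly these simplices, so the regular triangulation produced by $h$ is indeed $\chi^{-1}(\mathcal{A}_\sigma)$.

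The converse runs the same computation in reverse. Given that $\chi^{-1}(\mathcal{A}_\sigma)$ is regular with some height function $\widetilde{h}$, I would first normalize: adding an affine function to $\widetilde{h}$ does not change the regular triangulation, and because $P_i$ and $P_{-i}$ share the same $y$-coordinate $\widehat{e}_i$, one can kill the asymmetry $\widetilde{h}(P_i)+\widetilde{h}(P_{-i})$ for every $i$ by a suitable choice of the $y$-part of the shift. This produces an antisymmetric $h$, and setting $w_i:=-h(P_i)$ the strict lower-facet inequalities (which hold precisely because the subdivision is a triangulation) translate, via the same algebraic identity, into $\overrightarrow{w}\cdot\sigma(C(B,e_j))>0$ for every fundamental circuit. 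Lemma~\ref{funda} then upgrades this to $\overrightarrow{w}\cdot\overrightarrow{C}>0$ for all $\overrightarrow{C}\in\sigma$, witnessing that $\sigma$ is acyclic by Lemma~\ref{geometric signature}.

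The main obstacle will be the bookkeeping in passing between the affine geometry of $\mathcal{P}$ — in particular its affine hull and the fact that regularity of the triangulation forces the strict version of $\ell_S\le h$ off the facet — and the combinatorics of signed fundamental circuits of $\mathcal{M}$. Once the antisymmetric normalization is isolated as the natural choice of gauge, the whole proof is driven by the single identity above relating $\overrightarrow{a}_S\cdot M_j$ to $\overrightarrow{w}\cdot\overrightarrow{C}(B,\overrightarrow{e_j})$, and everything else reduces to linear-algebra checks.
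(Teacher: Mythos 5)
Your proposal is correct and follows essentially the same route as the paper's proof: both use the lifting construction for regular triangulations, determine the supporting hyperplane of a lifted simplex from the heights on the bioriented pairs indexed by $B$, and derive the same key identity from the fundamental-circuit relation $M\cdot\overrightarrow{C}(B,\overrightarrow{e_j})=0$, so that the strict lower-facet inequalities become exactly $\overrightarrow{w}\cdot\sigma(C(B,e_j))>0$, which is matched against Lemma~\ref{geometric signature} via Lemma~\ref{funda}. The only cosmetic differences are your antisymmetric normalization $h(P_{\pm i})=\mp w_i$ of the height function (the paper keeps general heights and sets $\mathbf{w}=(h_1-h_{-1},\dots,h_n-h_{-n})$) and your use of the affine function $\ell_S$ in place of the paper's hyperplane normal $\mathbf{n}_H$ and virtual heights $\overline{h}_j$.
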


\begin{proof}
Recall that a circuit signature $\sigma$ is acyclic if and only if there exists $\mathbf{w}\in\mathbb{R}^n$ such that $\mathbf{w}^T \cdot \mathbf{C}>0$ for each signed circuit $\mathbf{C}\in\sigma$ (Lemma~\ref{geometric signature}).

Suppose we have a regular triangulation, which is induced by heights $h_i$, where $i=1, \cdots, n, -1, \cdots, -n$. We need to find an acyclic signature $\sigma$ such that the regular triangulation equals $\chi^{-1}({A}_\sigma)$. Let $\mathbf{w}$ be $(h_{-1}-h_1, \cdots, h_{-n}-h_n)^T$,  
and let $\sigma$ be the acyclic signature induced by $\mathbf{w}$ in the sense of Lemma~\ref{geometric signature}. Then it is enough to show that, for every maximal simplex $S\in\chi^{-1}({A}_\sigma)$, the corresponding lifted simplex $S'$ is a lower facet of the lifted polytope $\mathcal{P}'$. Still denote
$\overrightarrow{B}=\chi(S)$, where $\overrightarrow{B}$ is an externally oriented bases in $\mathcal{A}_\sigma$. Then for every vertex $\mathbf{P}_j$ that is not in $S$, the arc $\overrightarrow{e_{-j}}=\chi(\mathbf{P}_{-j})$ is in $\overrightarrow{B}$ because at least one of $\overrightarrow{e_{j}}$ and $\overrightarrow{e_{j}}$ is in $\overrightarrow{B}$. The signed fundamental circuit $\mathbf{C}_{-j}$ of $\overrightarrow{e_{-j}}$ with respect to $B$ is in $\sigma$, and hence we have $\mathbf{w}^T\cdot\mathbf{C}_{-j}>0$. By Lemma~\ref{lem: lower face}, the point $\mathbf{P}_j'$ is higher than the affine hyperplane $H$ spanned by $S'$. Because this holds for every vertex $\mathbf{P}_j$ that is not in $S$, the simplex $S'$ is a lower facet. 

Conversely, if we have an acyclic circuit signature induced by some vector $\mathbf{w}$, then we may construct the heights $h_i$ such that $\mathbf{w}=(h_1-h_{-1}, \cdots, h_n-h_{-n})^T$ and get a lifted polytope $\mathcal{P}'$. By a similar argument, one can show that for every maximal simplex $S\in\chi^{-1}({A}_\sigma)$, the corresponding lifted simplex $S'$ is a lower facet of $\mathcal{P}'$. Thus the triangulation $\chi^{-1}({A}_\sigma)$ is regular. 
\end{proof}
This completes proving the results in Section~\ref{Lawrence-intro}.

\section*{Acknowledgement}
Many thanks to Olivier Bernardi for orienting the author toward the study of this project and for countless helpful discussions. Thanks to Spencer Backman, Matt Baker, and Chi Ho Yuen for helpful discussions. Thanks to Matt Baker for helpful comments on the first draft of the paper. Thanks to Gleb Nenashev for proving Theorem~\ref{triangular}. Thanks to the referees for helpful suggestions.

\bibliographystyle{plain}
\bibliography{Lawrence.bib}

\end{document}